\documentclass[a4paper,american,11pt]{amsart}

\usepackage[latin1]{inputenc}
\usepackage[american]{babel}
\usepackage{amsmath}
\usepackage{amssymb}
\usepackage{amsfonts}
\usepackage{tikz-cd}
\usepackage{mathtools}

\usepackage{graphicx}
\usepackage{xcolor}
\usepackage{url}
\usepackage{bbm}
\usepackage{enumitem}
\usepackage[margin = 3cm]{geometry}
\usepackage{setspace}
\parskip=0.5ex plus0.16667ex minus0.08333ex
\usepackage[normalem]{ulem}
\usepackage[colorlinks=true]{hyperref}

\newcommand{\ZZ}{{\mathbb Z}}
\newcommand{\Z}{{\mathbb Z}}

\newcommand{\PP}{{\mathbb P}}

\newcommand{\CC}{{\mathbb C}}
\newcommand{\RR}{{\mathbb R}}

\newcommand{\f}{\Sigma}

\newcommand{\bfv}{\mathbf v}
\newcommand{\bfw}{\mathbf w}
\newcommand{\bfu}{\mathbf u}
\newcommand{\bfe}{\mathbf e}
\newcommand{\bfx}{\mathbf x}

\newcommand{\B}{{\mathcal B}}

\newcommand{\F}{{\mathcal F}}

\newcommand{\A}{{\mathcal A}}
\renewcommand{\L}{{\mathcal L}}

\newcommand{\CH}{A}
\newcommand{\rank}{r}

\newcommand{\comment}[1]{}

\DeclareMathOperator{\Eu}{Eu}
\DeclareMathOperator{\MW}{MW}
\DeclareMathOperator{\bcsm}{CSM}
\DeclareMathOperator{\csm}{csm}

\DeclareMathOperator{\convex}{convex}
\DeclareMathOperator{\cone}{cone}
\DeclareMathOperator{\Vertices}{vert}
\DeclareMathOperator{\spann}{span}

\DeclareMathOperator{\Mat}{Mat}
\DeclareMathOperator{\interior}{int}
\DeclareMathOperator{\face}{face}
\DeclareMathOperator{\faces}{faces}
\DeclareMathOperator{\codim}{codim}
\DeclareMathOperator{\divis}{div}

\DeclareMathOperator{\Bases}{Bases}

\newtheorem{thm}{Theorem}[section]

\newtheorem{prop}[thm]{Proposition}
\newtheorem{lemma}[thm]{Lemma}
\newtheorem{cor}[thm]{Corollary}

\newtheorem{conj}[thm]{Conjecture}

         {\theoremstyle{definition}

\newtheorem{exa*}[thm]{Example}
\newtheorem{defi}[thm]{Definition}
\newtheorem{rem}[thm]{Remark}}

\DeclareRobustCommand{\qedify}[1]{%
  \ifmmode \quad\hbox{#1}
  \else
    \leavevmode\unskip\penalty9999 \hbox{}\nobreak\hfill
    \quad\hbox{#1}%
  \fi
  }
\newenvironment{exa}{\begin{exa*}\pushQED{\qedify{$\diamondsuit$}}}{\popQED\end{exa*}}

\makeatletter
\newtheorem*{rep@theorem}{\rep@title}
\newcommand{\newreptheorem}[2]{%
\newenvironment{rep#1}[1]{%
 \def\rep@title{{\bf #2 \ref{##1}}}%
 \begin{rep@theorem}}%
 {\end{rep@theorem}}}
\makeatother

\newreptheorem{theorem}{Theorem}

\setcounter{tocdepth}{1}
\numberwithin{equation}{section}

\begin{document}


\title{Chern-Schwartz-MacPherson cycles of matroids}

\date{\today}

\author{Lucía López de Medrano}
\address{Unidad Cuernavaca del Instituto de Matemáticas, Universidad Nacional
Autónoma de México. Cuernavaca, México.}
\email{lucia.ldm@im.unam.mx}

\author{Felipe Rincón}
\address{Department of Mathematics, University of Oslo, 0851 Oslo, Norway.}
\email{feliperi@math.uio.no}

\author{Kristin Shaw}
\address{Max Planck Institut f\"ur Mathematik in der Naturwissenschaft, Inselstra{\ss}e 22, 04103 
Leipzig, Deutschland}
\email{kshaw@mis.mpg.de}


\begin{abstract}
We define Chern-Schwartz-MacPherson (CSM) cycles of an arbitrary matroid. These are balanced weighted fans supported on the skeleta of the corresponding Bergman fan. 
In the case that the matroid arises from a complex hyperplane arrangement $\mathcal A$, we show that these cycles represent the CSM class of the complement of $\mathcal A$.
We also prove that for any matroid, the degrees of its CSM cycles are given by the coefficients of (a shift of) the reduced characteristic polynomial, 
and that CSM cycles are valuations under matroid polytope subdivisions. 
\end{abstract}

\maketitle

\newcommand{\RP}{\mathbb{RP}}
\newcommand{\CP}{\mathbb{CP}}
\newcommand{\TP}{\mathbb{TP}}
\newcommand{\be}{\beta}
\newcommand{\T}{\mathbb{T}}
\newcommand{\R}{\mathbb{R}}
\newcommand{\C}{\mathbb{C}}

\tableofcontents

\section{Introduction}
Matroids are a combinatorial abstraction of independence in mathematics introduced independently by Whitney and Nakasawa \cite{Nishimura}. 
They axiomatize different notions such as linear independence, algebraic independence, affine independence, and many others.
In particular, every hyperplane arrangement gives rise to a matroid, as we describe in Section \ref{sec:CSMhyperplane}. 
Given an invariant of a hyperplane arrangement, it is thus important to ask if it is an invariant of its underlying matroid. 

In complex algebraic geometry, the Chern-Schwartz-MacPherson class is a generalization of the Chern class of a tangent bundle to the case of singular or non-compact algebraic varieties over $\CC$.
Given a hyperplane arrangement $\A$ in $\CC\PP^d$, its complement $C(\A) \coloneqq \CC\PP^d \setminus \mathcal A$ embeds into the wonderful compactifications, 
as defined by De Concini and Procesi \cite{deConciniProcesi}. 
In this paper we provide a combinatorial description of the Chern-Schwartz-MacPherson class of $C(\A)$ in the maximal wonderful compactification in terms of certain balanced polyhedral fans 
that depend only on the underlying matroid. 
Our combinatorial definition generalizes to all matroids, whether or not they are representable over $\CC$. 

Any matroid $M$ gives rise to a polyhedral fan $\B(M)$ called the Bergman fan of $M$ (Definition \ref{def:BergFan}).
Bergman fans of matroids are fundamental examples of linear spaces in tropical geometry and are thus essential objects in the field.
The Bergman fan $\B(M)$ of a representable matroid $M$ is the tropicalization of any linear space that represents $M$, while
Bergman fans of non-representable matroids are non-realizable tropical varieties \cite{MaclaganSturmfels, BriefIntro}. 
Regardless of whether or not a matroid is representable, the tropical geometry of its Bergman fan is in many ways analogous to the geometry of a classical  non-singular algebraic variety. 
For example, Bergman fans of matroids have a well-behaved intersection ring \cite{ShawInt},
they exhibit a version of Poincar\'e duality for tropical cohomology \cite{JSS}, and their Chow cohomology rings satisfy a version of Hard Lefschetz and the Hodge-Riemann bilinear relations \cite{AdiprasitoHuhKatz}. 
These powerful properties  were used in \cite{AdiprasitoHuhKatz} to resolve Rota's conjecture on the log-concavity of the coefficients of the characteristic polynomial of a general matroid. 

In this paper we define the Chern-Schwartz-MacPherson (CSM) cycles of an arbitrary matroid 
as  tropical cycles supported on the different skeleta of the corresponding Bergman fan. 
This construction is motivated in part by the desire to have a more general theory of characteristic classes in tropical geometry. 
Nonetheless, CSM cycles of matroids are interesting combinatorial objects on their own
 and are useful from a purely matroid-theoretical perspective. 
In fact, the CSM cycles of a matroid can be thought of as balanced polyhedral fans that generalize its Bergman fan to lower dimensions.

The $k$-th CSM cycle of a matroid $M$ is a weighted fan supported on the $k$-dimensional skeleton of the Bergman fan $\B(M)$, with weights coming from the product of beta invariants of certain minors of $M$ (Definition \ref{def:chernweight}). 
The maximal dimensional CSM cycle of $M$ is equal to $\B(M)$ with weights equal to one on all top-dimensional cones,  while the zero dimensional CSM cycle of $M$ is equal to the origin with multiplicity $(-1)^{r(M)-1}\beta(M)$, where $\beta(M)$ is the beta invariant of $M$ and $r(M)$ is the rank of $M$.
The CSM cycles of intermediate dimensions have weights that 
generalize these two cases. 
Our first theorem is that for any $k$, this 
choice of weights on the $k$-skeleton of $\B(M)$ does produce a tropical cycle. 
\begin{reptheorem}{thm:balanced}
The $k$-th CSM cycle $\csm_k(M)$ of a matroid $M$ is a balanced fan.
\end{reptheorem}

Given a complex hyperplane arrangement $\A$ in $\CP^d$, elements in the Chow homology $A_*(W_\A)$ of the maximal wonderful compactification $W_\A$
of the complement $C(\A) \coloneqq \CP^d \setminus \A$ can be represented by balanced fans supported on the Bergman fan $\B(M_\A)$
of the matroid $M_\A$ induced by $\A$, see Section \ref{sec:CSMhyperplane}.
Our second theorem relates the CSM class of the complement $C(\A)$ to the CSM cycles of the matroid $M_ \A$ in the Chow homology of $W$. 
\begin{reptheorem}{thm:csmcomplement}
Let $W_\A$ be the wonderful compactification of 
the complement $C(\A)$ of an arrangement of hyperplanes $\A$ in $\CC\PP^d$. Then 
$$\textstyle \bcsm(\mathbbm{1}_{{C}(\A)}) = \sum_{k=0}^d \csm_k(M_\A) \in A_*(W_\A).$$
\end{reptheorem}

The above theorem shows that the combinatorially defined CSM cycles of a matroid have geometric meaning when the matroid is representable in characteristic $0$. 

For general matroids, we show in Section \ref{Sec:val} that the CSM cycles are matroid valuations.
A matroid valuation is a function on the set of matroids that satisfies an inclusion-exclusion property for matroid polytope subdivisions (Definition \ref{def:valuation}). The class of matroid valuations includes many well-known invariants such as the Tutte polynomial, the volume and Erhart polynomial of the matroid polytope, and the Billera-Jia-Reiner quasisymmetric function \cite{Speyer, BJR, AFR, FinkDerksen}. Matroid valuations have gained significant attention recently and are very useful tools for understanding the combinatorial structure of matroid polytope subdivisions and tropical linear spaces \cite{Speyer, Speyer:Ktheory}.
\begin{reptheorem}{thm:valuation}
For any $k$, the function $\csm_k$ sending a matroid $M$ to its $k$-dimensional CSM cycle $\csm_k(M)$ is a valuation under matroid polytope subdivisions.
\end{reptheorem}

Every tropical cycle in $\mathbb{R}^n$ has a 
degree (Definition \ref{def:degree}). In Section \ref{Sec:polynomial} 
we show that the degrees of the CSM cycles of a matroid are given by the coefficients of a shift of the reduced characteristic polynomial.
These coefficients are of enumerative interest. 
For instance, they provide the $h$-vector of the broken circuit complex of a matroid. 
\begin{reptheorem}{thm:hvector}
Suppose $M$ is a rank $d+1$ matroid. Then
$$\sum_{k=0}^d \deg(\csm_{k}(M))\,t^k = \overline{\chi}_M(1+t).$$
\end{reptheorem}
\noindent For matroids representable in characteristic $0$, the above statement specialises to a formula already found in different contexts (\cite[Theorem 3.5]{Huh:MaxLikely} and \cite[Theorem 1.2]{Aluffi:Grothendieck}).

In  Section \ref{Sec:polynomial}, we state a  conjectural description of Speyer's $g$-polynomial using the CSM cycles of a  matroid.  
This polynomial matroid invariant was originally constructed for matroids representable over a field of characteristic $0$ 
via the $K$-theory of the Grassmannian \cite{Speyer:Ktheory}. 
This definition was later extended to all matroids  in \cite{FinkSpeyer}.
The fact that its coefficients are  non-negative integers for matroids realizable in characteristic $0$ is the key ingredient
in Speyer's proof of the $f$-vector conjecture in characteristic $0$ \cite{Speyer:Ktheory}. 
This conjectured formula describes the $g$-polynomial in terms of intersection numbers of the CSM cycles of a matroid 
with certain tropical cycles derived from them (Conjecture \ref{conj:gpoly}). 
This conjecture provides a Chow theoretic description of this $K$-theoretic invariant. 
A proof of Conjecture \ref{conj:gpoly}  will appear in forthcoming work of Fink, Speyer and the third author.

\subsection*{Previous work}
To end this introduction we would like to point out how the CSM cycles of matroids are related to existing work in tropical geometry. 
In \cite{MIkICM}, Mikhalkin introduced  the tropical canonical class $K_V$ of a tropical variety $V$. 
This is a weighted polyhedral complex supported on the codimension-$1$ skeleton of $V$. The weight in $K_V$ of a codimension-$1$ face $F$ is $\text{val}(F)  - 2$, where $\text{val}(F)$ is equal to the number of top-dimensional faces of $V$ adjacent to $F$. For the Bergman fan $\B(M)$ of a rank $d+1$ matroid $M$ we have $\csm_{d-1}(\B(M)) = -K_{\B(M)}$, see Example \ref{ex:csm0}. 
In the case of tropical curves, this is the same definition of the canonical class used to study tropical linear series and the Riemann-Roch theorem \cite{BakerNorine1, GatKer, MikZha:Jac}.

It is important to notice that for an arbitrary tropical variety $V$, the weighted polyhedral complex $K_V$ is in general not balanced. 
For instance, there is a $2$-dimensional tropical variety 
$V \subseteq \R^4$ presented in \cite[Section 5]{BabaeeHuh}, for which it can be easily checked that $K_V$ does not satisfy the balancing condition.  
This particular tropical variety provides  a  counter-example to the strongly positive Hodge conjecture, and hence is not realizable. 

In general, Mikhalkin also suggested to define the Chern classes of a tropical variety as tropical cycles supported on the skeleta of the variety, however, the weights of these cycles were not defined. 
The definition of the CSM cycles for matroids presented here extends to tropical manifolds, as defined for example in \cite{MikZha:Eig} or  \cite{Shaw:Surf}. These are tropical varieties which are locally given by Bergman fans of matroids. 
In dimension $2$, the canonical class and second Chern classes of (combinatorial) tropical surfaces defined in \cite{Cartwright:Surfaces} and \cite{Shaw:Surf} coincide with $-\csm_1(\B(M))$ and $\csm_0(\B(M))$ respectively, when the tropical surface is the Bergman fan of a rank $3$ matroid $M$. 
These tropical characteristic classes appear in a version of Noether's formula in both of these papers. 

Finally, Bertrand and Bihan equip with weights the skeleta of a complete intersection of tropical hypersurfaces to produce tropical varieties \cite{BertrandBihan}. In Remark \ref{rem:BB}, we address when our constructions overlap  and show that in these cases they coincide.  The connection described in Section \ref{sec:CSMhyperplane} between the CSM cycles of matroids and the CSM class of the complement of a complex hyperplane arrangement suggests there is a relation between the weighted skeleta from \cite{BertrandBihan} and the CSM classes of very affine varieties.

\section*{Acknowledgements}

We are very grateful to Beno\^it Bertrand, Fr{\'e}d{\'e}ric Bihan, Erwan Brugall{\'e}, Gilberto Calvillo, Dustin Cartwright, Alex Fink, Eric Katz, Ragni Piene, and David Speyer for illuminating discussions.
We would also like to thank Erwan Brugall\'e for helpful comments on a preliminary version of this manuscript.

The first author was supported for this research by ECOS NORD M14M03, FORDECYT, UMI 2001, Laboratorio Solomon Lefschetz CNRS-CONACYT-UNAM, México, \linebreak 
PAPIIT-IN114117 and PAPIIT- IN108216. The second author was supported by the Research Council of Norway grant 239968/F20. The research of the third author was supported by the Alexander von Humboldt Foundation. This work was carried out in part while the third author was at the Fields Institute for Research in the Mathematical Sciences for the program ``Combinatorial Algebraic Geometry" and also at the Max Planck Institute for Mathematics in the Sciences.

\section{CSM cycles of Bergman fans of matroids}\label{Sec:CSMcycles}

In this section we define the Chern-Schwartz-MacPherson (CSM) cycles  of a matroid 
as a collection of weighted rational polyhedral fans (Definition \ref{def:chernweight}).
We then prove that these fans are balanced (Theorem \ref{thm:balanced}). 

We start by fixing some notation. 
Throughout we will always consider the standard lattice $\ZZ^{n+1} \subseteq \R^{n+1}$, and we will denote by $\{\bfe_0, \bfe_1, \dotsc, \bfe_n\}$ the standard basis of this lattice. 
For any subset $S \subseteq \{0,\dotsc,n\}$, let $\bfe_S \coloneqq \sum_{i\in S} \bfe_i \in \ZZ^{n+1}$.
The quotient vector space $\R^{n+1} / \mathbf{1} \coloneqq \RR^{n+1} /\,\RR \cdot \bfe_{\{0,\dotsc, n\}}$ is spanned by the lattice $\ZZ^{n+1} / \mathbf{1} \coloneqq \ZZ^{n+1} /\,\ZZ \cdot \bfe_{\{0,\dotsc, n\}}$. 

A polyhedral fan $\f$ in $\R^{n+1}$ is called {\bf rational} if every cone of $\f$ is defined by a collection of  inequalities each of the form $ \langle \alpha ,  \mathbf x  \rangle \leq 0$ with $\alpha  \in \Z^{n+1}$. 
If $\f$ is a rational polyhedral fan in $\RR^{n+1}$ whose lineality space contains $\RR \cdot \bfe_{\{0,\dotsc, n\}}$, we also
refer to its image in $\RR^{n+1}  / \mathbf{1}$ as a rational polyhedral fan.

We will assume the reader has some knowledge of the basics of matroid theory; this can be found, for example, in \cite{White1, White2}.
We denote by $\Mat_{n+1}$ the set of matroids on $n+1$ elements labeled $0,1,\dotsc, n$.
Every matroid has an associated rational polyhedral  fan, called its Bergman fan.
Given a set of vectors $\{ \bfv_1,\dotsc, \bfv_r\}$ in a real vector space, we will denote by $\cone({ \bfv}_1,\dotsc,{  \bfv}_r)\coloneqq \{\sum_{i=1}^r \lambda_i{ \bfv}_i \mid \lambda_i\in\R_{\geq 0}\}$ the cone that they generate.

\begin{defi}\label{def:BergFan}
Let $M\in\Mat_{n+1}$ be a matroid of rank $d+1$. 
If $M$ is a loopless matroid, the {\bf affine Bergman fan} $\hat{\B}(M)$ of $M$ 
is the pure $(d+1)$-dimensional rational polyhedral fan in $\R^{n+1}$ consisting of the collection of 
cones of the form 
\[\sigma_\F \coloneqq  \cone(\bfe_{F_1},\bfe_{F_2},\dotsc,\bfe_{F_k}) + \RR \!\cdot\! \bfe_{\{0,\dotsc, n\}}\]
where 
$\F = \{\emptyset \subsetneq F_1 \subsetneq F_2 \subsetneq \dotsb \subsetneq F_k \subsetneq \{0,\dotsc, n\}\}$ is a chain of flats in 
the lattice of flats $\L(M)$ of $M$. If $M$ has a loop then we define $\hat{\B}(M) = \emptyset$.
 
The {\bf (projective) Bergman fan} $\B(M)$ of $M$
is the pure $d$-dimensional rational polyhedral fan obtained as the image of $\hat{\B}(M)$ 
in the quotient vector space $\R^{n+1} / \mathbf{1}$.
\end{defi}

\begin{exa}\label{ex:bergmanuniform}
Suppose $M$ is the uniform matroid $M = U_{d+1,n+1}$. 
A subset $F \subseteq \{0,\dotsc,n\}$ is a flat of $M$ if and only if $|F| \leq d$ or $F = \{0, \dotsc, n\}$. 
The top-dimensional cones of the Bergman fan $\B(M)$ are thus all cones of the form
$\cone(\bfe_{F_1},\bfe_{F_2},\dotsc,\bfe_{F_d})$ 
with $F_1 \subsetneq \dotsb \subsetneq F_d \subsetneq  \{0,\dotsc,n\}$ and $|F_i|=i$.
Figure \ref{fig:tropicalplane} shows the $2$-dimensional Bergman fan $\B(U_{3,4})$ in $\R^4 / \mathbf{1}  \cong \RR^3$.
\begin{figure}[ht]
\begin{center}
\includegraphics[scale=1.5]{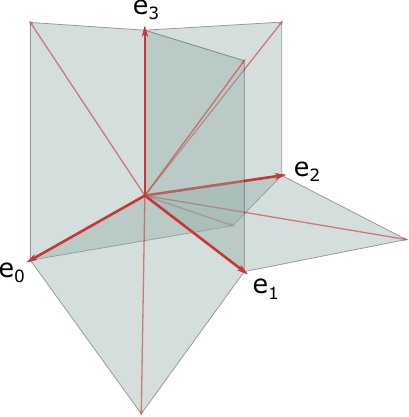}
\caption{The Bergman fan $\B(U_{3,4})$ in $\R^4 / \mathbf{1} \cong \RR^3$.}
\label{fig:tropicalplane}
\end{center}
\end{figure}
\end{exa}

A pure dimensional polyhedral fan $\f$ is {\bf weighted} if each top-dimensional cone $\sigma \in \f$ is equipped with an integer weight $w_{\f}(\sigma) \in \ZZ$. 
For a polyhedral fan $\f$,  we write $|\f| \coloneqq  \bigcup_{\sigma \in \f} \sigma$ for its {\bf support}. 
If $\f$ is a weighted polyhedral fan, we define its support to be the union of its top-dimensional cones of non-zero weight. 

Let $\f$ be a pure $d$-dimensional rational weighted polyhedral fan in $\RR^{n+1} / \mathbf{1}$. Suppose $\tau \in \f$ is a $(d-1)$-dimensional cone, 
and consider the linear subspace $L_\tau \coloneqq \spann_\RR(\tau)$. 
For any $d$-dimensional cone $\sigma \in \f$ such that $\sigma \supsetneq \tau$, let $\mathbf v_\sigma \in \ZZ^{n+1} / \mathbf{1}$ be such that 
$$\spann_\ZZ ( \mathbf v_\sigma, L_\tau \cap (\ZZ^{n+1} / \mathbf{1})) = \spann_\RR(\sigma) \cap (\ZZ^{n+1} / \mathbf{1}).$$
The fan $\f$ satisfies the {\bf balancing condition} at $\tau$ if 
$\sum_{\sigma \supsetneq \tau} w_{\f}(\sigma) \mathbf v_{\sigma} \in L_\tau$.
We say that $\f$ is {\bf balanced} if every $(d-1)$-dimensional cone $\tau$ of $\f$ verifies the balancing condition.

\begin{prop}\cite{SturmPoly}
The Bergman fan of a matroid is a balanced 
fan when equipped with weights equal to 1 on all its top-dimensional cones. 
\end{prop}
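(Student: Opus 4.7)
My plan is to reduce the balancing condition at any codimension-1 cone to a statement about rank-$2$ matroids, and then to verify that case directly.

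First I would parametrize the cones in question. A top-dimensional cone of $\B(M)$ corresponds to a complete flag $\emptyset \subsetneq F_1 \subsetneq \dotsb \subsetneq F_d \subsetneq \{0,\dotsc,n\}$ with $\mathrm{rk}(F_i) = i$, and its primitive integral generators modulo $\mathbf{1}$ are the classes of $\bfe_{F_1},\dotsc,\bfe_{F_d}$. A $(d-1)$-dimensional cone $\tau$ of $\B(M)$ therefore corresponds to a flag of flats with exactly one rank, say $i$, missing:
\[
\emptyset \subsetneq F_1 \subsetneq \dotsb \subsetneq F_{i-1} \subsetneq F_{i+1} \subsetneq \dotsb \subsetneq F_d \subsetneq \{0,\dotsc,n\},
\]
with the convention $F_0 = \emptyset$ and $F_{d+1} = \{0,\dotsc,n\}$. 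The top-dimensional cones $\sigma \supsetneq \tau$ are then in bijection with flats $F$ of $M$ of rank $i$ satisfying $F_{i-1} \subsetneq F \subsetneq F_{i+1}$, and for each such $F$ one may choose $\mathbf v_\sigma = [\bfe_F]$ in $\ZZ^{n+1}/\mathbf{1}$.

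Next I would recognize that the set of flats $F$ as above is precisely the set of rank-$1$ flats (atoms) of the rank-$2$ minor $N \coloneqq (M|F_{i+1})/F_{i-1}$, whose ground set is $E \coloneqq F_{i+1} \setminus F_{i-1}$. Moreover, for each such $F$, the vector $\bfe_F$ differs from $\bfe_{F \setminus F_{i-1}}$ by $\bfe_{F_{i-1}}$, which already lies in the lineality of $\tau$. So proving the balancing condition at $\tau$ reduces, modulo $L_\tau$, to verifying
\[
\sum_{a \text{ atom of } N} \bfe_a \;\in\; \RR \cdot \bfe_{E} + \spann_\RR(\tau),
\]
i.e.\ to the balancing condition for the $1$-dimensional Bergman fan $\B(N)$ in $\RR^{E}/\RR \bfe_E$.

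The final step is to prove balancing for any loopless rank-$2$ matroid $N$ on a set $E$. Since $N$ is loopless of rank $2$, every element of $E$ is contained in exactly one atom of $N$ (two distinct atoms cannot share an element, else their union would be a flat of rank at most $2$ strictly containing each, forcing them to coincide). Thus the atoms of $N$ partition $E$, and
\[
\sum_{a \text{ atom of } N} \bfe_a = \bfe_E,
\]
which vanishes in $\RR^E/\RR \bfe_E$. The only mild obstacle is the bookkeeping of the quotient by $\mathbf{1}$ and by $L_\tau$, but once the star of $\tau$ is identified with $\B(N)$ as above, balancing in $\B(M)$ at $\tau$ is literally the balancing of $\B(N)$ at the origin, and this is the content of the rank-$2$ calculation.
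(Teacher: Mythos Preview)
Your argument is correct. The reduction of balancing at a codimension-$1$ cone $\tau$ to the rank-$2$ minor $N=(M|F_{i+1})/F_{i-1}$ is exactly right, and the observation that the atoms of a loopless rank-$2$ matroid partition its ground set finishes the proof cleanly.

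Regarding comparison: the paper does not actually prove this proposition; it simply cites \cite{SturmPoly}. So there is no ``paper's own proof'' to compare against for this particular statement. That said, the paper does prove the much more general Theorem~\ref{thm:balanced} (balancing of $\csm_k(M)$ for all $k$), and its proof uses the same structural move you make---pass to the star of a codimension-$1$ cone and identify it with the Bergman fan of a minor---but the base case there (Lemma~\ref{lem:balancing1dim}) is substantially harder, since it must handle the beta-invariant weights rather than constant weight $1$. Your rank-$2$ partition argument is the clean special case of that machinery when all weights are $1$; specializing the paper's Theorem~\ref{thm:balanced} to $k=d$ would recover the proposition, but via a needlessly heavy computation with M\"obius functions.
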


We will often not be concerned with the specific fan structure of a weighted polyhedral fan, but only with its support and its weights. This prompts us to introduce the notion of fan tropical cycles. 

\begin{defi}\label{def:tropCycle}
A {\bf fan tropical cycle} in $\R^{n+1} / \mathbf{1}$ is a pure dimensional balanced rational 
weighted fan in $\R^{n+1} / \mathbf{1}$ up to an equivalence relation. 
Given two such fans $\f$ and $\f'$, we have $\f \sim \f'$ if $|\f| = |\f'|$ and 
whenever $\sigma \in \f$ and $\sigma' \in \f'$ are top-dimensional cones such that $\interior(\sigma) \cap \interior(\sigma') \neq \emptyset$ 
we have $w_{\f}(\sigma) = w_{\f'}(\sigma')$.

The set of $k$-dimensional fan tropical cycles in $\R^{n+1} / \mathbf{1}$  is denoted by 
$\mathcal{Z}_k(\R^{n+1} / \mathbf{1})$. 
This set forms a group under the operation of taking set theoretic unions along with the  addition of weight functions \cite[Construction 2.13]{AllermannRau}.
\end{defi}

\begin{defi}\label{def:matCycle} The {\bf matroidal tropical cycle} associated to a matroid $M\in\Mat_{n+1}$ is the tropical cycle 
represented by the polyhedral fan $\B(M)$ equipped with weights equal to 1 on all its top-dimensional cones. 
\end{defi}

We will use the notation $\B(M)$ to denote both the Bergman fan of a matroid $M$ and the tropical cycle it defines.

We will define the CSM cycles of a matroid $M$ by assigning a natural weight to each cone of the Bergman fan $\mathcal{B}(M)$. 
The main ingredient to concoct these weights is the beta invariant of a matroid. 

\begin{defi}\label{def:mobiusBeta}
Let $\L(M)$ be the lattice of flats of a matroid $M$. 
The {\bf M\"obius function} of $\L(M)$ is the function $\mu : \L(M) \times \L(M) \to \ZZ$ defined recursively by
\[ \mu(F,G) \coloneqq  
\begin{cases} 
0 & \text{ if } F \nsubseteq G,\\
1 & \text{ if } F = G,\\
\displaystyle - \sum_{F \subseteq G' \subsetneq G} \mu(F,G') & \text{ if } F \subsetneq G. 
\end{cases}
\] 

Let $r$ denote the rank function of $M$. 
If $M$ is a loopless matroid, the {\bf characteristic polynomial} $\chi_M(\lambda)$ of $M$ is the polynomial
\[ \chi_M(\lambda) \coloneqq  \sum_{F \in \L(M)} \mu(\emptyset, F) \, \lambda^{r(M)-r(F)}. \]
If $M$ has a loop, we define $\chi_M(\lambda) \equiv 0$. 
The {\bf reduced characteristic polynomial} of $M$ is the polynomial 
$$\overline{\chi}_M(\lambda) \coloneqq  \chi_M(\lambda)/(\lambda - 1).$$ 

If $M$ is a loopless matroid, the {\bf beta invariant} of $M$ is defined as
\begin{equation*}\label{eq:betainvariant}
\beta(M) \coloneqq  (-1)^{r(M)} \sum_{F \in \L(M)} \mu(\emptyset, F) \, r(F) = (-1)^{r(M)-1}\overline{\chi}_M(1).
\end{equation*}
If $M$ has a loop then $\beta(M)$ is defined to be $0$.  
The beta invariant of a matroid is always non-negative, and furthermore, 
$\beta(M) = 0$ if and only if $M$ is disconnected or $M$ 
consists of a single loop. 
For a more detailed exposition of these notions, see, for instance, \cite[Chapter 7]{White2}.
\end{defi}

\begin{exa}\label{ex:betauniform}
Consider the uniform matroid $M = U_{d+1, n+1}$, discussed in Example \ref{ex:bergmanuniform}. 
Its M\"obius function satisfies $\mu(\emptyset,F) = (-1)^{|F|}$ if $|F| \leq d$, 
and $\mu(\emptyset,\{0,\dotsc,n\}) = \sum_{i=0}^{d}(-1)^{i+1}\binom{n+1}{i}$.
Its characteristic polynomial is thus equal to
\[
\chi_{U_{d+1,n+1}}(\lambda) = \sum_{i=0}^{d} (-1)^i \binom{n+1}{i} (\lambda^{d+1-i}-1),
\]
and its reduced characteristic polynomial is equal to
\[
\overline{\chi}_{U_{d+1,n+1}}(\lambda)
=  \sum_{i=0}^{d} (-1)^i \binom{n}{i} \lambda^{d-i}.
\]
The beta invariant of $U_{d+1, n+1}$ is $\beta(U_{d+1,n+1}) = \binom{n-1}{d}.$
\end{exa}

The following is the central definition of our paper.

\begin{defi}\label{def:chernweight}
Suppose $M\in\Mat_{n+1}$ is a  rank $d+1$ matroid.
For $0 \leq k \leq d$, the $k$-dimensional {\bf Chern-Schwartz-MacPherson (CSM) cycle}
$\csm_k(M)$ of $M$ is the $k$-dimensional skeleton of $\B(M)$ equipped with weights on its top-dimensional cones. 
If $M$ is a loopless matroid,  the  
weight of the cone $\sigma_\F$ corresponding to a flag of flats $\F \coloneqq  \{\emptyset = F_0
\subsetneq F_1 \subsetneq \dotsb \subsetneq F_{k} \subsetneq F_{k+1} =
\{0,\dotsc,n\}\}$ is 
\[ w(\sigma_\F) \coloneqq  (-1)^{d-k} \prod_{i=0}^{k} \beta(M|F_{i+1}/F_i),\]
where $M|F_{i+1}/F_i$ denotes the minor of $M$ obtained by restricting to $F_{i+1}$ and contracting $F_i$.
If $M$ has a loop then we define $\csm_k(M) \coloneqq  \emptyset$ for all $k$.
\end{defi}

We will prove in Theorem \ref{thm:balanced} that the CSM cycles of a matroid are balanced fans.
As the name suggests, we will often consider the CSM cycles of a matroid as fan tropical cycles, as in Definition \ref{def:tropCycle}.
We illustrate our definition with some examples.

\begin{exa}\label{ex:csm0}
The $0$-dimensional CSM cycle of any rank $d+1$ 
matroid  $M \in \Mat_{n+1}$ is the origin in $\R^{n+1} / \mathbf{1}$ 
with weight equal to $(-1)^d\beta(M)$. 
For example, the cycle $\csm_0(U_{2,3})$ consists of the origin
with weight $-1$, while $\csm_0(U_{2,4})$ is equal to the origin with weight $-2$; see Figure \ref{fig:csm0}.
\begin{figure}
\begin{center}
\includegraphics[scale=1.3]{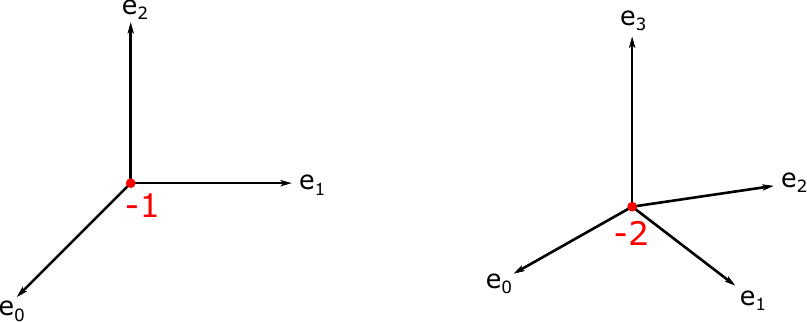}
\caption{The CSM cycles $\csm_0(U_{2,3})$ in $\B(U_{2,3})$ in $\RR^3 / \mathbf{1} \cong \R^2$ and $\csm_0(U_{2,4})$ in $ \B(U_{2,4})$ in $\RR^4 / \mathbf{1} \cong \R^3$. }
\label{fig:csm0}
\end{center}
\end{figure}

The $d$-dimensional CSM cycle of a  matroid $M$ is equal to the matroidal tropical cycle $\B(M)$.
Indeed, if $M$ is a rank $d+1$ loopless matroid and $\{\emptyset = F_0 \subsetneq F_1 \subsetneq \dotsb \subsetneq F_{d} \subsetneq F_{d+1} = \{0,\dotsc,n\}\}$
is a maximal chain of flats then all the matroids $M|F_{i+1}/F_i$ are uniform matroids of rank 1, which have beta invariant equal to 1.

The $(d-1)$-dimensional CSM cycle of a matroid $M$ consists of the codimension-$1$ skeleton of $\B(M)$ with certain weights. The weight of a $(d-1)$-dimensional face $\sigma_{\F}$ is given by $2 - \text{val}(\sigma_{\F})$, where $\text{val}(\sigma_{\F})$ is the number of top-dimensional faces of $\B(M)$ containing $\sigma_{\F}$. 
This is because for any length $d$ chain $\F = \{\emptyset = F_0 \subsetneq F_1 \subsetneq \dotsb \subsetneq F_{d-1} \subsetneq F_{d} = \{0,\dotsc,n\}\}$ there is a unique $j$ for which $r(F_{j+1}) = r(F_j)+2$. For all $i \neq j$ the matroids  $M|F_{i+1}/F_i$ are uniform matroids of rank 1 as above. The  matroid $M|F_{j+1}/F_j$ is of rank $2$ and has beta invariant $\beta(M) = \text{val}(\sigma_{\F}) - 2$.  
\end{exa}

\begin{exa}\label{ex:csmcycleUniform}
Let us consider again the case of the uniform matroid $M = U_{d+1, n+1}$, with $d <n$ (see Examples \ref{ex:bergmanuniform} and \ref{ex:betauniform}). 
If $\F \coloneqq  \{\emptyset = F_0 \subsetneq F_1 \subsetneq \dotsb \subsetneq F_{k} \subsetneq F_{k+1} 
= \{0,\dotsc,n\}\}$ is a chain of flats in $M$, all the matroids $M|F_{i+1}/F_i$ with $i < k$ are  direct sums of coloops. 
Therefore,  $\beta(M|F_{i+1}/F_i) = 1$ if $\rank(M|F_{i+1}/F_i) =1$ and $\beta(M|F_{i+1}/F_i) =0$ otherwise. 
The weight of the cone $\sigma_\F$ in $\csm_k(M)$ is thus zero unless $|F_i| = i$ for all $i \leq k$. 
Equivalently, the cone  $\sigma_\F$ is equipped with weight $0$ unless it is a top-dimensional cone of the Bergman fan $\B(U_{k+1,n+1})$.
In this case, the matroid $M|F_{k+1}/F_k = M/F_k$ is a uniform matroid of rank $d+1-k$
on $n+1-k$ elements. By Example \ref{ex:betauniform}, its  beta invariant is $\beta(U_{d+1-k, n+1-k}) = \binom{n-k-1}{d-k}$. 
It follows that, as a tropical cycle, the CSM cycle $\csm_k(U_{d+1, n+1})$ is 
the Bergman fan $\B(U_{k+1,n+1})$ equipped with weight $(-1)^{d-k}\binom{n-k-1}{d-k}$ on all its top-dimensional cones.
For example, the CSM cycle $\csm_1(U_{3, 4})$ consists of the rays in directions $\mathbf{e}_0, \mathbf{e}_1, \mathbf{e}_2,$ and $\mathbf{e}_3$, each equipped with weight $-1$; see Figure \ref{fig:tropicalplane}.
\end{exa}

Notice that some of the cones in the $k$-skeleton of $\B(M)$ can be assigned weight $0$ in $\csm_k(M)$. 
The following proposition describes the support of $\csm_k(M)$ in terms of the coarse subdivision of $|\B(M)|$, introduced in \cite{ArdilaKlivans}.
Cones of this coarse subdivision correspond to equivalence classes of cones in $\B(M)$. Two cones $\sigma_{\F}$ and $\sigma_{\F'}$ associated to chains of flats
$\F \coloneqq  \{\emptyset = F_0 \subsetneq F_1 \subsetneq \dotsb \subsetneq F_{k} \subsetneq F_{k+1} = \{0,\dotsc,n\}\}$ and $\F' \coloneqq  \{\emptyset = F'_0 \subsetneq F'_1 \subsetneq \dotsb \subsetneq F'_{l'} \subsetneq F'_{l'+1} = \{0,\dotsc,n\}\}$
are equivalent if and only if the matroids
$$M|F_1/F_0 \oplus M|F_2/F_1 \oplus \dotsb \oplus M|F_{k+1}/F_k \,\text{ and }\,  M|F'_1/F'_0 \oplus M|F'_2/F'_1 \oplus \dotsb \oplus M|F'_{l'+1}/F'_{l'}$$
are equal. 
Such an equivalence class of cones of $\B(M)$ produces an $m$-dimensional cone in the coarse subdivision, where $m$ is the number of connected components of the matroids described above.

\begin{exa}\label{ex:coarse}
For  $M = U_{d+1, n+1}$, the Bergman fan $\B(M)$ is described in Example \ref{ex:bergmanuniform}. The coarse subdivision of  $\B(M)$ has as top-dimensional cones all cones of the form 
$\cone(\bfe_i \mid i \in I)$ with $I \subseteq \{0,\dotsc,n\}$ and $|I| = d$.
\end{exa}

\begin{prop}\label{prop:nonzerocones}
The support of $\csm_k(M)$ is equal to the $k$-skeleton of the coarse subdivision of $|\B(M)|$.  
\end{prop}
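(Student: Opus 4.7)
The plan is to translate both the nonvanishing of the weight in $\csm_k(M)$ and membership in a $k$-dimensional cell of the coarse subdivision into the same combinatorial condition on a chain of flats $\F = \{\emptyset = F_0 \subsetneq F_1 \subsetneq \dotsb \subsetneq F_k \subsetneq F_{k+1} = \{0,\dotsc,n\}\}$: that each minor $M|F_{i+1}/F_i$ be connected.

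First I would note that each minor $M|F_{i+1}/F_i$ is automatically loopless, since an element $e\in F_{i+1}\setminus F_i$ that became a loop of the minor would lie in the closure of $F_i$, contradicting that $F_i$ is a flat of $M$. Combined with the standard fact recalled in Definition \ref{def:mobiusBeta} that $\beta(N)>0$ precisely when the loopless matroid $N$ is connected, this immediately gives that the weight $w(\sigma_\F)=(-1)^{d-k}\prod_{i=0}^k\beta(M|F_{i+1}/F_i)$ is nonzero if and only if each $M|F_{i+1}/F_i$ is connected.

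Next I would interpret the coarse subdivision combinatorially via the defining equivalence relation. The chains equivalent to $\F$ are exactly those obtained by refining $\F$ so as to split each $M|F_{i+1}/F_i$ along its direct-sum decomposition into connected components, so the longest equivalent chain has length $\sum_{i=0}^k c(M|F_{i+1}/F_i)-1$, where $c(\cdot)$ counts connected components. Consequently the coarse cell containing $\sigma_\F$ has this dimension, and it equals $k$ precisely when each of the $k+1$ summands is connected. This matches the condition of the previous paragraph, so taking closures identifies $\supp(\csm_k(M))$ with the union of the $k$-dimensional cells of the coarse subdivision.

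To conclude, I would show that every coarse cell of dimension $m<k$ is a face of some $k$-dimensional coarse cell, thereby realizing the whole $k$-skeleton. Starting from a chain $\F$ realizing such an $m$-cell with all minors already connected, I would iteratively insert a flat that splits a connected minor of rank $\geq 2$ into two connected pieces; each such insertion raises the dimension of the ambient coarse cell by one, so after $k-m$ steps the enlarged chain sits in a $k$-dimensional cell with the original cell on its boundary. The main obstacle in this plan is the splitting step: it rests on the matroid-theoretic lemma that any connected loopless matroid $N$ of rank $\geq 2$ admits a proper nonempty flat $S$ with both $N|S$ and $N/S$ connected, which can be extracted from standard results on matroid connectivity (for instance, Tutte's theorem that for any element $e$ of a connected matroid, one of $N\setminus e$ or $N/e$ is connected, followed by taking $S$ to be the parallel class of a suitable element).
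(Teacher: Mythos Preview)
Your first two steps are exactly the paper's proof: the weight on $\sigma_\F$ is nonzero iff every minor $M|F_{i+1}/F_i$ is connected (these minors being loopless, as you note), and this is precisely the condition that $\sigma_\F$ lie in a $k$-dimensional cone of the coarse subdivision. The paper stops there.

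Your third step supplies rigor the paper leaves implicit, but your route to it is both harder than needed and not quite complete. The coarse subdivision is a \emph{pure} polyhedral fan of dimension $d$ --- this is part of the Ardila--Klivans result cited where the coarse subdivision is introduced --- so every coarse cone of dimension $<k$ is automatically a face of some $k$-dimensional coarse cone, and the $k$-skeleton coincides with the union of the closed $k$-dimensional coarse cones. No further combinatorics is required. Your splitting-lemma approach instead asks that every connected loopless matroid $N$ of rank $\geq 2$ admit a proper nonempty flat $S$ with both $N|S$ and $N/S$ connected. This statement is true, but your one-line derivation from Tutte's theorem does not establish it: Tutte only guarantees that for each element $e$ one of $N\setminus e$ or $N/e$ is connected, and in the case where it is $N\setminus e$ rather than $N/e$ that survives, you have not explained how ``taking $S$ to be the parallel class of a suitable element'' yields the desired flat. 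So either invoke the purity of the coarse fan directly, or give a genuine proof of the splitting lemma.
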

\begin{proof}
If $M$ is a loopless matroid, the weight of the cone $\sigma_\F$ corresponding to a flag of flats
$\F \coloneqq  \{\emptyset = F_0 \subsetneq F_1 \subsetneq \dotsb \subsetneq F_{k} \subsetneq F_{k+1} = \{0,\dotsc,n\}\}$ 
is non-zero precisely when all the loopless matroids $M|F_{i+1}/F_i$ are connected. 
This happens precisely when $\sigma_\F$ is contained in a $k$-dimensional cone of the coarse subdivision of $|\B(M)|$.
\end{proof}

\begin{exa}
A matroid is called {\bf series-parallel} if it is the matroid associated to a series-parallel network;
see \cite[Section 5.4]{Oxley}. Equivalently, a matroid $M$ is series-parallel if and only if $\beta(M) = 1$
or $M$ is a loop \cite[Theorem 7.3.4]{White2}. 
Furthermore, any minor of a series-parallel matroid is either disconnected 
or a series-parallel matroid \cite[Corollary 5.4.12]{Oxley}. 
It follows that if $M$ is a rank $d+1$ series-parallel matroid then for any $k$, the weights on the top-dimensional cones of 
$\csm_k(M)$ are all either $0$ or $(-1)^{d - k}$. 
In view of Proposition \ref{prop:nonzerocones} we conclude that, as a tropical cycle,
the CSM cycle $\csm_k(M)$ is equal to the $k$-skeleton of the coarse subdivision of $|\B(M)|$ with all weights equal to $(-1)^{d - k}$.
\end{exa}

The main theorem in this section shows that CSM cycles are balanced fans. 

\begin{thm}\label{thm:balanced}
The CSM cycle $\csm_k(M)$ of a matroid $M$ is a balanced fan.
\end{thm}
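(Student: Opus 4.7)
My plan is to check the balancing condition at an arbitrary codimension-one face of $\csm_k(M)$, factor the local weights, and reduce the problem to a single identity about beta invariants. By Definition \ref{def:chernweight} we may assume $M$ is loopless, for otherwise $\csm_k(M) = \emptyset$. Fix a $(k-1)$-dimensional cone $\sigma_{\G}$ of the $k$-skeleton of $\B(M)$, corresponding to a chain of flats
$$\G = \{\emptyset = G_0 \subsetneq G_1 \subsetneq \dotsb \subsetneq G_{k-1} \subsetneq G_k = \{0,\dotsc,n\}\}.$$
The $k$-dimensional cones $\sigma_{\F}$ of $\csm_k(M)$ containing $\sigma_{\G}$ are in bijection with pairs $(j, F)$ where $j \in \{0,\dotsc,k-1\}$ and $F$ is a flat of $M$ satisfying $G_j \subsetneq F \subsetneq G_{j+1}$; the chain $\F$ is obtained from $\G$ by inserting $F$ in the $j$-th gap. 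The primitive integer generator of the corresponding ray is $\bfe_F$, and since the lineality $L_{\sigma_{\G}}$ contains $\bfe_{G_i}$ for every $i$, modulo $L_{\sigma_{\G}}$ it reduces to $\bfe_{F \setminus G_j}$, a vector supported in $G_{j+1} \setminus G_j$.

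Expanding Definition \ref{def:chernweight} gives the weight factorization
$$w(\sigma_{\F}) = (-1)^{d-k}\, C_j \cdot \beta(M|F/G_j)\,\beta(M|G_{j+1}/F), \qquad C_j \coloneqq \prod_{i \in \{0,\dotsc,k-1\}\setminus\{j\}} \beta(M|G_{i+1}/G_i),$$
in which $C_j$ depends only on $\G$ and $j$, not on $F$. The subsets $G_{j+1}\setminus G_j$ are pairwise disjoint as $j$ varies, and each $\bfe_{G_{j+1}\setminus G_j} = \bfe_{G_{j+1}} - \bfe_{G_j}$ already lies in $L_{\sigma_{\G}}$. Therefore the balancing sum at $\sigma_{\G}$ splits into $k$ independent contributions, one for each $j$. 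Setting $N_j \coloneqq M|G_{j+1}/G_j$ and identifying a proper nonempty flat $F' = F \setminus G_j$ of $N_j$ with the inserted flat, balancing at $\sigma_{\G}$ reduces to showing that
$$\sum_{F' \in \L(N_j),\, \emptyset \subsetneq F' \subsetneq G_{j+1}\setminus G_j} \beta(N_j|F')\,\beta(N_j/F')\, \bfe_{F'} \;\in\; \RR \cdot \bfe_{G_{j+1}\setminus G_j}$$
for each $j$.

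It therefore suffices to establish the following identity, which is equivalent to saying that $\csm_1(N)$ is balanced at the origin for every matroid $N$: for any loopless matroid $N$ on ground set $E'$, the vector $\sum_{\emptyset \subsetneq F' \subsetneq E'} \beta(N|F')\,\beta(N/F')\, \bfe_{F'}$ (the sum being taken over proper nonempty flats of $N$) is a scalar multiple of $\bfe_{E'}$. This is the main obstacle. I would attempt it by induction on $|E'|$: pick an element $e_0 \in E'$, split the flats $F'$ according to whether they contain $e_0$, and use the deletion-contraction recursion $\beta(N) = \beta(N\setminus e_0) + \beta(N/e_0)$ (valid when $e_0$ is neither a loop nor a coloop) to match the coefficient of $\bfe_e$ on both sides for each $e \in E'$. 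The loop, coloop, and disconnected cases simplify the sum directly and supply the base of the induction. With this identity in hand, applying it to each $N_j$ immediately yields balancing of $\csm_k(M)$ at every $\sigma_{\G}$.
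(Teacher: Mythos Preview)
Your reduction from the general $k$-dimensional case to the $k=1$ case is correct and is exactly the argument the paper gives: one factors the weight at a codimension-one face $\sigma_{\G}$, separates the balancing sum by the gap index $j$, and is left with verifying that $\csm_1(N_j)$ is balanced at the origin for each minor $N_j = M|G_{j+1}/G_j$. So the architecture of your proof matches the paper's proof of Theorem \ref{thm:balanced} precisely.

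The gap is in the identity itself. You correctly flag it as ``the main obstacle'', but your deletion--contraction sketch is not a proof, and it is not clear it goes through cleanly: the lattices of flats of $N$, $N\setminus e_0$, and $N/e_0$ are not in simple bijection (a flat of $N\setminus e_0$ need not be a flat of $N$, and conversely), so matching the coordinate sums $\sum_{F' \ni i} \beta(N|F')\,\beta(N/F')$ termwise across the recursion requires bookkeeping you have not supplied. The paper (Lemma \ref{lem:balancing1dim}) takes a different route: it expands each beta invariant via the M\"obius function, swaps the order of summation, and collapses the inner M\"obius sum so that the $i$-th coordinate becomes
\[
(-1)^{d+1}\Bigl(\sum_{F_1} \mu(\emptyset,F_1)\,r(F_1)^2 \;+\; \sum_{F_1 \not\ni i} \mu(\emptyset,F_1)\,r(F_1)\Bigr).
\]
The second sum is then identified with $-p_i'(1) + d\,p_i(1)$ where $p_i(\lambda) = \sum_{F_1 \not\ni i} \mu(\emptyset,F_1)\,\lambda^{d-r(F_1)}$, and the key fact---not obvious, and the real content of the lemma---is that $p_i(\lambda) = \overline{\chi}_M(\lambda)$ for every $i$ when $M$ is loopless. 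That independence statement is what your argument is missing an analogue of; if you want to complete your approach, you need either this or a genuine inductive substitute for it.
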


The proof follows from the case  $k=1$, which we prove  in the next lemma. 
 
\begin{lemma}\label{lem:balancing1dim}
The CSM cycle $\csm_1(M)$ of a matroid $M$ is a balanced fan.
\end{lemma}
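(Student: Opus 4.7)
Paragraph 1 (setup). Since $\csm_1(M)$ is one-dimensional, the only face at which to verify the balancing condition is the origin. The rays of the $1$-skeleton of $\B(M)$ are indexed by the proper nonempty flats $F$ of $M$; each such ray has primitive integer generator equal to the image of $\bfe_F$ in $\ZZ^{n+1}/\mathbf{1}$ and carries weight $w_F = (-1)^{d-1}\beta(M|F)\beta(M/F)$ in $\csm_1(M)$. Balancing at the origin is therefore equivalent to showing that the vector $\sum_{F} w_F\,\bfe_F$ lies in $\RR\cdot\bfe_{\{0,\dotsc,n\}}$, or equivalently, that the quantity
\[ S_i(M) \coloneqq  \sum_{\substack{F \in \L(M) \\ i \in F,\, F \neq \{0,\dotsc,n\}}} \beta(M|F)\,\beta(M/F) \]
is independent of the element $i\in\{0,\dotsc,n\}$.

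Paragraph 2 (strategy). My plan is to prove the identity $S_i(M)=S_j(M)$ by induction on the number of elements of $M$. Because two parallel elements are contained in exactly the same flats, $S_i(M)=S_j(M)$ is automatic whenever $i$ and $j$ are parallel, so we may first reduce to the case where $M$ is simple. The main tool for the induction is the deletion-contraction identity $\beta(M) = \beta(M\setminus e) + \beta(M/e)$ (valid when $e$ is neither a loop nor a coloop), combined with the standard bijections between the flats of $M$ containing (respectively, avoiding) an element $e$ and the flats of $M/e$ (respectively, $M\setminus e$). Using these, one expresses $S_i(M)-S_j(M)$ as a $\ZZ$-linear combination of analogous differences for the smaller matroids $M\setminus e$ and $M/e$, which vanish by the inductive hypothesis. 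The base case of rank at most $2$ is transparent: all proper nonempty flats are then parallel classes, $\beta(M|F)=1$ for each, and $S_i(M)$ reduces to $\sum_F\beta(M/F)$ over parallel classes $F$ containing $i$, which is plainly independent of $i$.

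Paragraph 3 (main obstacle and alternative routes). The principal obstacle is that the flats containing $i$ and the flats containing $j$ admit no canonical bijection, so $S_i(M) = S_j(M)$ must hold through genuine cancellation rather than by term-by-term matching, which makes the deletion-contraction induction rather bookkeeping-heavy and forces one to keep careful track of the cases where $e$ becomes a loop or coloop in a minor. Before committing to this route, I would search for a more structural argument that bypasses the identity entirely: for instance, exhibiting $\csm_1(M)$ as a $\ZZ$-linear combination of fans that are already known to be balanced---matroidal cycles $\B(N)$ for auxiliary matroids $N$, stable intersections of $\B(M)$ with tropical hyperplanes, or divisors of piecewise-linear functions on $|\B(M)|$. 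Any such presentation would make balancing automatic and would also shed conceptual light on why the weights $(-1)^{d-1}\beta(M|F)\beta(M/F)$ are the correct ones. Absent such a shortcut, the inductive argument above is the fallback.
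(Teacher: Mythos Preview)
Your setup in Paragraph~1 is exactly right and matches the paper: balancing of $\csm_1(M)$ reduces to the purely combinatorial claim that
\[
S_i(M)=\sum_{\substack{F\in\hat\L(M)\\ i\in F}}\beta(M|F)\,\beta(M/F)
\]
is independent of $i$.

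From there, however, the paper does \emph{not} induct via deletion--contraction. It proves the independence by direct M\"obius-function manipulation: expand each $\beta$ using $\beta(N)=(-1)^{r(N)}\sum_{G\in\L(N)}\mu(\emptyset,G)\,r(G)$, apply $\sum_{F_2\supseteq F}\mu(F,F_2)=0$ to drop one rank factor, interchange the order of summation over triples $F_1\subseteq F\subseteq F_2$, and use the M\"obius identity $\sum_{F}\mu(F,F_2)=[\,\overline{F_1\cup\{i\}}=F_2\,]$ (summed over $F$ with $\overline{F_1\cup\{i\}}\subseteq F\subseteq F_2$). This collapses $S_i(M)$ to an expression whose only $i$-dependence is through $\sum_{F_1\not\ni i}\mu(\emptyset,F_1)\,r(F_1)$, and that sum is independent of $i$ because $\sum_{F_1\not\ni i}\mu(\emptyset,F_1)\,\lambda^{d-r(F_1)}=\overline\chi_M(\lambda)$ for every $i$ (a standard identity for the reduced characteristic polynomial).

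Your inductive plan is a genuinely different route, and as written it has a real gap: the heart of the argument---writing $S_i(M)-S_j(M)$ as a $\ZZ$-linear combination of $S_i(M\setminus e)-S_j(M\setminus e)$ and $S_i(M/e)-S_j(M/e)$---is asserted but not carried out, and it is not routine. Deletion--contraction for $\beta$ applies term-by-term only when $e$ is neither a loop nor a coloop of the relevant minor $M|F$ or $M/F$, and the map $F\mapsto F\setminus e$ from flats of $M$ to flats of $M\setminus e$ is not a bijection (two flats $F$ and $\overline{F\cup e}$ can collapse). Until you exhibit the actual recursion with all boundary terms accounted for, the induction does not close. Your Paragraph~3 alternatives are closer in spirit to what happens later in the paper (Proposition~5.3 gives $\csm_k(M)=\delta^*\csm_k(M\setminus i)-\delta^*\csm_k(M/i)$), but note that the paper's proof of that formula \emph{uses} the balancing of $\csm_k(M)$, so invoking it here would be circular.
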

\begin{proof}
Let $M \in \Mat_{n+1}$ be a rank $d+1$ matroid. We can assume that $M$ has no loops, as otherwise $\B(M)$ is empty.
The only codimension-$1$ cone of $\csm_1(M)$ is the origin of $\R^{n+1} / \mathbf{1}$. 
The top-dimensional cones of $\csm_1(M)$ are the cones $\sigma_F = \cone(\bfe_{F})$ 
with $F$ a flat in $\hat{\L}(M) \coloneqq \L(M) \backslash \{\emptyset, \{0,\dots,n\}\}$.
To show that $\csm_{1}(M)$ is balanced at the origin, we must show that 
$\textstyle \sum w(\sigma_F)  \bfe_F  = 0$ in $\R^{n+1} / \mathbf{1}$,  where the sum is over all flats $F \in \hat{\L}(M)$. 
This is equivalent to
\begin{equation}\label{eq:balanced}
\sum_{F \in \hat{\L}(M)} \beta(M|F) \beta(M/F) \, \bfe_F \, \in \, \RR \!\cdot\! \bfe_{\{0,\dotsc,n\}}
\end{equation} 
in the vector space $\R^{n+1}.$ 
For any $0 \leq i \leq n$, the $i$-th coordinate of the sum in \eqref{eq:balanced} is 
\begin{equation}\label{eq:coordinate}
\sum_{F \ni i} \beta(M|F) \beta(M/F),
\end{equation}
where the sum if over all flats $F \in \hat{\L}(M)$ containing $i$.
To prove \eqref{eq:balanced} we must then show that the sum in \eqref{eq:coordinate} 
is independent of the choice of $i \in \{0, \dots, n\}$. 
 
For any $F \in \hat{\L}(M)$, the lattice of flats of the matroid $M|F$ is isomorphic to the interval
$[\emptyset, F]$ of $\L(M)$, and the lattice of flats of $M/F$ is isomorphic
to the interval $[F, \{0,\dotsc, n\}]$ of $\L(M)$. 
These intervals correspond to loopless matroids, so the sum in \eqref{eq:coordinate} is equal to
\begin{align*}
& = \sum_{F \ni i} \Bigl( (-1)^{r(F)}
\sum_{F_1 \subseteq F} \mu(\emptyset, F_1) r(F_1) \Bigr) \Bigl( (-1)^{d+1-r(F)} \sum_{F_2
\supseteq F} \mu(F, F_2) (r(F_2) -r(F)) \Bigr) \\ 
& = (-1)^{d+1} \sum_{F \ni i} \, \Bigl( \sum_{F_1 \subseteq F} \mu(\emptyset,
F_1) r(F_1) \Bigr) \Bigl( \sum_{F_2 \supseteq F} \mu(F, F_2) r(F_2) \Bigr),
\end{align*}
where the last equality follows from the fact that $\sum_{F_2 \supseteq F}\mu(F,F_2)=0$.
We can now let $F$ vary over all flats of $\L(M)$ that contain $i$ including $\{0,\dotsc,n\}$,
as this just adds the constant term $\beta(M)(d+1)$, which does not depend on $i$.
Reordering the terms in the summation we get
\begin{align*}
& (-1)^{d+1} \sum_{F \ni i} \, \sum_{F_1 \subseteq F, \, F_2 \supseteq F}
\mu(\emptyset, F_1) \mu(F, F_2) r(F_1) r(F_2) \\
& = (-1)^{d+1} \sum_{F_1
\subseteq F_2} \mu(\emptyset, F_1) r(F_1) r(F_2) \Bigl( \sum_{\substack{F_1
\subseteq F \subseteq F_2 \\ F \ni i}} \mu(F, F_2) \Bigr).
\end{align*}
The condition that  $F_1 \subseteq F$ and $F \ni i$ is  equivalent
to $\overline{F_1 \cup \{i\}}\subseteq F$, where  $\overline{F_1 \cup \{i\}}$ denotes the minimal flat of $M$ containing $F_1 \cup \{i\}$. We can then rewrite the last sum as
\begin{align*}
&= (-1)^{d+1} \sum_{F_1 \subseteq F_2} \mu(\emptyset, F_1) r(F_1) r(F_2)
\Bigl( \sum_{\overline{F_1 \cup \{i\}}\subseteq F \subseteq F_2} \mu(F, F_2)
\Bigr).  
\end{align*} 
If $\bf P$ is a poset with minimum element $\hat 0$, maximum element $\hat 1$, and $\hat 0 \neq \hat 1$,
the M\"obius function $\mu_{\bf P}$ satisfies
$\sum_{p \in \bf P} \mu_{\bf P}(p, \hat 1) = 0$ \cite[Proposition 3.7.2]{StanleyEC1}.
The very last sum in parenthesis is then equal to $0$ whenever the
interval $[\overline{F_1 \cup \{i\}}, F_2]$ of $\L(M)$ is empty or has more
than one element, and it is equal to $1$ when $\overline{F_1 \cup \{i\}} =
F_2$. The above sum is thus equal to
\begin{align}
&= (-1)^{d+1} \sum_{F_1} \mu(\emptyset,
F_1) r(F_1) r(\overline{F_1 \cup \{i\}}) \nonumber \\ &= (-1)^{d+1} \Bigl(
\sum_{F_1 \ni i} \mu(\emptyset, F_1) r(F_1)r(F_1) + \sum_{F_1 \not\owns i}
\mu(\emptyset, F_1) r(F_1) (r(F_1)+1) \Bigr) \nonumber \\
&= (-1)^{d+1} \Bigl( \sum_{F_1} \mu(\emptyset, F_1) r(F_1)^2 + \sum_{F_1
\not\owns i} \mu(\emptyset, F_1) r(F_1) \Bigr) \label{eq:coordsum}.
\end{align}
The polynomial $p_i(\lambda) \coloneqq  \sum_{F_1 \not\owns i}
\mu(\emptyset, F_1) \lambda^{d-r(F_1)}$ does not depend on $i$; in fact, 
if $M$ has no loops then $p_i(\lambda) = \overline{\chi}(\lambda)$ for any $0 \leq i \leq n$ (see
\cite[Corollary 7.2.7]{White2}). 
If $p'_i(\lambda)$ denotes the derivative of $p_i(\lambda)$, we can write 
\[ \sum_{F_1 \not\owns i} \mu(\emptyset, F_1) r(F_1) = -p'_i(1) + d \sum_{F_1
\not\owns i} \mu(\emptyset, F_1) = -p'_i(1) + d \, p_i(1). 
\]
This shows that \eqref{eq:coordsum}, and thus \eqref{eq:coordinate}, does not depend on $i$,
proving \eqref{eq:balanced}.
\end{proof}

\begin{proof}[Proof of Theorem \ref{thm:balanced}]
Let $M \in \text{Mat}_{n+1}$ be a loopless matroid of rank $d+1$. 
 The balancing property of $\csm_k(M)$ for general $k$ will follow from the case $k
 =1$, which was proved in Lemma \ref{lem:balancing1dim}. 
 Let $\tau_\F = \cone( \bfe_{F_1}, \dotsc, \bfe_{F_{k-1}})$ 
 be a $(k-1)$-dimensional cone of $\B(M)$, corresponding to
 the chain of flats $\F \coloneqq  \{\emptyset = F_0 \subsetneq F_1 \subsetneq \dotsb
 \subsetneq F_{k-1} \subsetneq F_{k} = \{0,\dotsc,n\}\}$. The
 $k$-dimensional cones 
 adjacent to $\tau_\F$ have the form $\sigma_G \coloneqq  \tau_\F + \RR_{\geq 0}
 \cdot \bfe_G$, where $G$ is a flat of $M$ 
 such
 that $\F \cup \{G\}$ forms 
 a chain of flats of length $k+1$. Such a flat $G$ sits in
 exactly one of the open intervals $(F_i, F_{i+1})$ of $\L(M)$, with $0 \leq i \leq
 k-1$. Denoting $M_i \coloneqq  M|F_{i+1}/F_i$ and $\beta_i \coloneqq  \prod_{j\neq i} \beta(M_j)$, 
in  $ \RR^{n+1}$ we have
\begin{align*}
  \sum_{G} w(\sigma_G) \, \bfe_G &= \sum_{i=0}^{k-1} \sum_{G \in (F_i, F_{i+1})} w(\sigma_G) \, \bfe_G \\ 
	&= \sum_{i=0}^{k-1} \sum_{G \in (F_i, F_{i+1})} (-1)^{d-k}
  \beta_i \, \beta(M|G/F_i) \, \beta(M|F_{i+1}/G) \, \bfe_G \\
  &= (-1)^{d-k} \sum_{i=0}^{k-1} \beta_i \, \Bigl( \sum_{G \in (F_i, F_{i+1})} \beta(M|G/F_i) \, \beta(M|F_{i+1}/G) \, \bfe_G \Bigr).
 \end{align*}
 The lattice of flats of $M_i$ is isomorphic to the interval $[F_i, F_{i+1}]$ of $\L(M)$, so the last expression is equal to
 \begin{align*}
  \sum_{G} w(\sigma_G) \, \bfe_G &= (-1)^{d-k} \sum_{i=0}^{k-1} \beta_i \, 
	\Bigl( \sum_{G' \in \hat{\L}(M_i)} \beta(M_i|G') \, \beta(M_i/G') \, (\bfe_{G'}+ \bfe_{F_i}) \Bigr).
 \end{align*}
 By the balancing condition in the case $k=1$ (Statement \eqref{eq:balanced}), 
 the very last sum in parenthesis is a vector in the span of $\bfe_{F_{i+1}}$ and $\bfe_{F_i}$. 
 This shows that the whole sum $\sum_{G} w(\sigma_G) \bfe_G$ is a linear combination of $\bfe_{F_0}, \bfe_{F_1}, \dotsc, \bfe_{F_{k}}$, 
 which means that it is in $\spann(\tau_\F) \subseteq  \RR^{n+1}$. This proves that $\csm_k(M)$ is balanced. 
\end{proof}

\section{CSM classes of complements of hyperplane arrangements}\label{sec:CSMhyperplane}
The goal of this section is to relate the CSM class of the complement of a hyperplane arrangement in $\CC \PP^d$ to the CSM cycles of the underlying matroid of the arrangement. 

Let $X$ be an algebraic variety over the complex numbers. The group of {\bf constructible functions} on $X$, denoted by $\mathcal{C}(X)$, 
is the additive group generated by the functions of the form 
$$\mathbbm{1}_Y(x)  \coloneqq 
\begin{cases}
 1 & \text{ if } x \in Y,   \\
 0 & \text{ if } x \notin Y;
\end{cases}
$$
where $Y$ is a subvariety of $X$. 
Let $\mathcal{C}$ be the functor of constructible functions from the category of complex algebraic varieties with proper morphisms to the category of abelian groups.
Let $A_*$ denote the functor from the category of complex algebraic varieties to the category of abelian groups 
assigning to a variety $X$ its Chow homology group $A_*(X)$. 

The {\bf Chern-Schwartz-MacPherson class} 
is the unique natural transformation CSM from $\mathcal{C}$ to $A_*$ such that, if $X$ is smooth and complete then
$$\bcsm(\mathbbm{1}_X)
= c(TX) \cap [X],$$
where $TX$ is the tangent bundle of $X$ and $c(T X)$ denotes its Chern class.
We refer the reader to \cite{Aluffi:characteristic} for an introduction to these and other characteristic classes, as well as to \cite{brasselet} for an account of the interesting history of their development. 

There are two important features of the CSM class.  Firstly, the
dimension zero part of the $\bcsm$ class of a variety gives the topological Euler characteristic:
$$\deg \bcsm_0(\mathbbm{1}_X) = \Eu(X).$$
Secondly, they satisfy an inclusion-exclusion property. Namely, for subvarieties $Y_1, Y_2 \subseteq X$, the CSM class satisfies
\begin{equation*}
\bcsm(\mathbbm{1}_{Y_1 \cup Y_2}) = \bcsm(\mathbbm{1}_{Y_1}) + \bcsm(\mathbbm{1}_{Y_2}) - \bcsm(\mathbbm{1}_{Y_1\cap Y_2}) \in A_*(X).
\end{equation*}

Given an arrangement $\A = \{H_0, \dotsc, H_n\}$ of $n+1$ hyperplanes in $\CP^d$, let
$C(\A) \coloneqq \CC\PP^d \setminus \bigcup_{i=0}^n H_i$ denote its complement.
We will always assume that the hyperplane arrangement $\A$ is {\bf essential}, meaning that $\bigcap_{i=0}^n H_i = \emptyset$.  
The arrangement $\A$ defines a rank $d+1$ matroid $M_{\A}\in\Mat_{n+1}$
with rank function $\rank \colon 2^{\{0, \dots , n\}} \to \mathbb{Z}_{\geq 0}$ given by 
$$\rank(I) = \codim_{\CC} \bigcap_{i \in I} H_i.$$
The flats of $M_\A$ are in one to one correspondence with the linear subspaces of $\CP^{d}$ obtained as intersections of some of the hyperplanes in $\A$. 
Note that we consider $\CP^{d}$ and $\emptyset$ to be two such subspaces, corresponding to the flats $\emptyset$ and $\{0,\dots, n\}$, respectively. 
Indeed, any linear subspace $L$ of $\CP^{d}$ that occurs as the intersection of hyperplanes in $\A$ has the form $L = H_F \coloneqq \bigcap_{i \in F} H_i$, where $F$ is the flat $F = \{i \mid L \subseteq  H_i\}$.
The collection of linear subspaces $H_F$ for $F \in \L(M_{\A})$ ordered by reverse inclusion is a lattice isomorphic to the lattice of flats $\mathcal{L}_\A \coloneqq \L(M_{\mathcal{A}})$. 

Given a hyperplane arrangement $\A$ in $\CC\PP^d$, 
we denote by $W_{\A}$ the  {\bf maximal wonderful compactification} of its complement $C(\A)$, introduced by
De Concini and Procesi \cite{deConciniProcesi}. 
For an introduction to this compactification 
and others from a discrete or tropical-geometric point of view 
see \cite{feichtnerConcini, denhamToric}. 
The maximal  wonderful compactification $W_{\A}$ is obtained from $\CC\PP^d$ 
by blowing up all 
linear subspaces $H_F \subseteq \CP^d$
corresponding to flats $F \in \hat{\L}_\A \coloneqq \L_\A \setminus \{\emptyset, \{0,\dotsc,n\}\}$,
in order of increasing dimension. 
The divisor $\mathcal{D}\coloneqq W_\A \setminus C(\A)$ is a
simple normal crossing divisor, whose irreducible components 
are the proper transforms of the linear subspaces $H_F$. For any $F \in \hat{\L}_{\A}$, we denote the proper transform of $H_F$ in $W_{\A}$ by $D_F$. 

The Chow cohomology ring of the maximal wonderful compactification has
a simple combinatorial description. 
Consider the polynomial ring $S \coloneqq \mathbb{Q}[x_F \mid F \in  \hat \L_{\A}]$,
and define the ideal $I$ generated by 
$$\textstyle \sum_{F \ni i} x_F - \sum_{G \ni j} x_G \text{ for all } i \neq j \quad
\text{ and } \quad  x_Fx_G \text{ if } F \nsubseteq G  \text{ and } G \nsubseteq F.$$
Then the Chow cohomology ring of the maximal wonderful compactification 
is isomorphic to the graded quotient ring $A^*(W_\A) \cong S/I$. 
In this presentation, the variable $x_F$ represents the Chow cohomology class Poincar\'e dual to the class of the divisor  $D_F$. 
The ring $A^*(W_\A)$ is generated by the monomials of the form 
$x_{F_1} \dotsb x_{F_k}$, where $F_1 \subsetneq \dotsb \subsetneq F_k$ 
is a chain of  flats in $\hat \L_{\A}$ \cite[Proposition 5.5]{AdiprasitoHuhKatz}.

The Chow homology groups of the maximal wonderful compactification can be described in polyhedral terms.
A $k$-dimensional {\bf Minkowski weight} of a $d$-dimensional rational  fan $\Sigma$ is a rational weighted balanced 
fan whose underlying fan is equal to the $k$-dimensional skeleton of $\Sigma$.
The sum of two $k$-dimensional Minkowski weights of $\Sigma$ is the Minkowski weight 
obtained by adding the weights cone by cone. 
We denote the group of $k$-dimensional Minkowski weights of $\Sigma$ by $\MW_k(\Sigma)$.

Let $A_k(W_ \A)$ denote the $k$-th Chow homology group of 
$W_{\A}$. Then for every $k$ there is an isomorphism   
\begin{equation}\label{equation:chw}
 A_k(W_\A) \cong \MW_k(\B_\A),
\end{equation}
where $\B_\A$ denotes the Bergman fan of the matroid $M_\A$.
This isomorphism 
is obtained from Kronecker duality 
$\CH_k(W_{\A}) \cong \text{Hom}(\CH^k(W_\A), \mathbb{Z})$ and the perfect pairing defined by
\begin{equation}\label{eqn:compPairingExplicit}
\begin{array}{ccccc}
\CH^k(W_\A) & \times & \MW_{k}(\B_\A) & \longrightarrow &  \Z \\
x_{F_1} \dotsb x_{F_k} & \cap & Z & \longmapsto & w_Z(\sigma_{\mathcal{F}}), 
\end{array}
\end{equation}
where  $F_1 \subsetneq \dotsb \subsetneq F_k$ is a chain of flats in $\hat \L_{\A}$  \cite[Proposition 5.6]{AdiprasitoHuhKatz}.

Using this machinery we prove the following theorem. 

\begin{thm}\label{thm:csmcomplement}
Let $W_\A$ be the maximal wonderful compactification of 
the complement of an arrangement of hyperplanes $\A$ in
$\CC\PP^d$. 
Then 
$$\textstyle \bcsm(\mathbbm{1}_{{C}(\A)}) = \sum_{k=0}^d \csm_k(M_\A) \ \in \ \CH_*(W_\A) \cong \MW_{\ast}(\B_\A).$$
\end{thm}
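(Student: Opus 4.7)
The plan is to prove the identity by pairing both sides against the Kronecker dual basis of $A^*(W_\A)$ afforded by \eqref{eqn:compPairingExplicit}. Given the weights in Definition \ref{def:chernweight}, it suffices to verify
$$
\deg\bigl(x_{F_1} \cdots x_{F_k} \cap \bcsm(\mathbbm{1}_{C(\A)})\bigr) \;=\; (-1)^{d-k} \prod_{i=0}^{k} \beta(M_\A|F_{i+1}/F_i)
$$
for every chain of flats $\F = \{F_1 \subsetneq \cdots \subsetneq F_k\}$ in $\hat{\L}_\A$, with the convention $F_0 \coloneqq \emptyset$ and $F_{k+1} \coloneqq \{0, \ldots, n\}$.

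To compute the left-hand side, I would first invoke Aluffi's logarithmic formula: since $\mathcal{D} = \bigcup_{F} D_F$ is a simple normal crossings divisor on the smooth variety $W_\A$, one has $\bcsm(\mathbbm{1}_{C(\A)}) = c(T_{W_\A}(-\log \mathcal{D})) \cap [W_\A]$. Because the divisors $D_{F_1}, \ldots, D_{F_k}$ meet transversally along $D_\F \coloneqq D_{F_1} \cap \cdots \cap D_{F_k}$, the cap product satisfies $x_{F_1} \cdots x_{F_k} \cap [W_\A] = \iota_*[D_\F]$ for the inclusion $\iota : D_\F \hookrightarrow W_\A$. The projection formula then reduces the computation to the degree of the zero-dimensional component of $\iota^* c(T_{W_\A}(-\log \mathcal{D})) \cap [D_\F]$ inside $A_*(D_\F)$.

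The key geometric input next is the De Concini–Procesi structure theorem for boundary strata of the maximal wonderful compactification: there is an isomorphism
$$D_\F \;\cong\; W_{\A_0} \times W_{\A_1} \times \cdots \times W_{\A_k},$$
where $\A_i$ is a hyperplane arrangement realizing the minor $M_i \coloneqq M_\A|F_{i+1}/F_i$, and under this identification the induced boundary divisor $\mathcal{D}' \coloneqq \mathcal{D}|_{D_\F}$ is precisely the product of the boundary divisors of the factors $W_{\A_i}$. Combined with the standard restriction formula $\iota^* T_{W_\A}(-\log \mathcal{D}) \cong T_{D_\F}(-\log \mathcal{D}') \oplus \mathcal{O}_{D_\F}^{k}$ for log tangent sheaves along transverse SNC intersections, applying Aluffi's formula a second time on $D_\F$ yields
$$\iota^* c(T_{W_\A}(-\log \mathcal{D})) \cap [D_\F] \;=\; \bcsm(\mathbbm{1}_{D_\F \setminus \mathcal{D}'}) \;\in\; A_*(D_\F),$$
whose zero-dimensional component has degree equal to the topological Euler characteristic $\chi(D_\F \setminus \mathcal{D}') = \prod_{i=0}^k \chi(C(\A_i))$.

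To finish, I would apply the classical Orlik–Solomon formula $\chi(C(\A_i)) = \overline{\chi}_{M_i}(1) = (-1)^{r(M_i)-1}\beta(M_i)$; since $\sum_{i=0}^k (r(M_i) - 1) = (d+1) - (k+1) = d - k$, the signs combine to $(-1)^{d-k}$ and the product of beta invariants produces exactly the weight in Definition \ref{def:chernweight}. The main obstacle I anticipate is making the third step rigorous: one must verify that the De Concini–Procesi factorization $D_\F \cong \prod_i W_{\A_i}$ identifies the induced SNC boundary $\mathcal{D}'$ with the product of the boundary divisors of the $W_{\A_i}$ and matches each factor $\A_i$ to a representation of the minor $M_\A|F_{i+1}/F_i$. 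This requires careful unpacking of the recursive blowup construction of $W_\A$, though all constituent facts are standard in the theory of wonderful compactifications.
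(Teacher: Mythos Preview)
Your proof is correct and follows essentially the same route as the paper: reduce via the pairing \eqref{eqn:compPairingExplicit}, apply Aluffi's logarithmic formula to identify $x_{F_1}\cdots x_{F_k} \cap \bcsm(\mathbbm{1}_{C(\A)})$ with $\bcsm_0(\mathbbm{1}_{K_\F^\circ}) = \Eu(K_\F^\circ)$ (the paper cites \cite[Lemma~5.4]{Aluffi} directly for this step, whereas you unpack it via the projection formula and the residue filtration on the log tangent sheaf), and then use the De~Concini--Procesi factorization $K_\F^\circ \cong \prod_i C(\A_i)$ together with $\Eu(C(\A_i)) = (-1)^{r(M_i)-1}\beta(M_i)$ to finish. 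The ``main obstacle'' you flag is precisely the content of the paper's Lemma~\ref{lem:prodComp}.
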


The proof of Theorem \ref{thm:csmcomplement} relies on the next sequence of lemmas. 
For any chain of  flats  $\mathcal{F} \subseteq \hat \L_\A$ set $K_{\mathcal{F}} \coloneqq \bigcap_{F \in \mathcal{F}} D_F$
and  $K_{\mathcal{F}}^\circ \coloneqq K_{\mathcal{F}} \backslash \bigcup_{F \notin \mathcal{F}} D_F$.

\begin{lemma}\label{lem:Aluffi}
Let $W_\A$ be the maximal wonderful compactification of the complement of an arrangement of
hyperplanes $\A$ in $\CC\PP^d$.  
For any chain of  flats  $\mathcal{F} \subseteq \hat \L_\A$ 
we have 
$$\bcsm(\mathbbm{1}_{K_{\F}^\circ}) = \Big( c(T_{W_\A}(-\log(\mathcal{D})) 
\prod_{F \in \F} x_F \Big) \cap [W_\A] \in A_*(W_\A).$$
\end{lemma}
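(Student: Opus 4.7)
The strategy is to apply Aluffi's formula for CSM classes of complements of simple normal crossing divisors on the smooth stratum $K_\F$, and then transfer the resulting class to $A_*(W_\A)$ via the projection formula and transversality of the boundary components.

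First, because $\mathcal{D}$ is simple normal crossing, the stratum $K_\F = \bigcap_{F \in \F} D_F$ is smooth, and the residual divisor $E_\F \coloneqq \bigcup_{G \notin \F}\bigl(D_G \cap K_\F\bigr)$ is simple normal crossing on $K_\F$ with complement $K_\F^\circ$. Aluffi's formula for the CSM class of an SNC complement (the identity $\bcsm(\mathbbm{1}_{Y \setminus E}) = c(T_Y(-\log E)) \cap [Y]$ for smooth $Y$) then gives
$$\bcsm(\mathbbm{1}_{K_\F^\circ}) = c\bigl(T_{K_\F}(-\log E_\F)\bigr) \cap [K_\F].$$

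Second, I would rewrite this intrinsic Chern class as the restriction of an ambient one. The iterated residue sequence for the logarithmic tangent bundle along SNC strata yields a short exact sequence
$$0 \to T_{K_\F}(-\log E_\F) \to T_{W_\A}(-\log \mathcal{D})|_{K_\F} \to \mathcal{O}_{K_\F}^{\oplus |\F|} \to 0,$$
in which the quotient records the residues along the normal directions to the divisors $D_F$ with $F \in \F$. Since the trivial bundle has total Chern class $1$, this gives $c\bigl(T_{W_\A}(-\log \mathcal{D})\bigr)|_{K_\F} = c\bigl(T_{K_\F}(-\log E_\F)\bigr)$.

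Third, let $\iota\colon K_\F \hookrightarrow W_\A$ denote the closed embedding. By naturality of CSM under proper morphisms, $\iota_*\bcsm(\mathbbm{1}_{K_\F^\circ}) = \bcsm(\mathbbm{1}_{K_\F^\circ})$ inside $A_*(W_\A)$. The projection formula then gives
$$\iota_*\!\left(c(T_{W_\A}(-\log \mathcal{D}))|_{K_\F} \cap [K_\F]\right) = c(T_{W_\A}(-\log \mathcal{D})) \cap \iota_*[K_\F],$$
and the transversality of the smooth components $D_F$ (with $[D_F]$ Poincar\'e dual to $x_F$) gives $\iota_*[K_\F] = \prod_{F \in \F} x_F \cap [W_\A]$. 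Stringing these identities together yields the stated formula.

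The main obstacle I expect is a clean justification of the log tangent identity in step two. This is standard in logarithmic geometry, but if one wants to avoid invoking it as a black box, one can proceed inductively on $|\F|$: at each step, peel off one divisor $D_{F_i}$, use the single-component residue sequence $0 \to T_Y(-\log D) \to T_Y \to N_{D/Y} \to 0$ together with its logarithmic enhancement, and observe that the restriction $N_{D_{F_i}/W_\A}$ becomes trivialized after passing to the log structure. The rest of the proof is essentially formal manipulation with the projection formula.
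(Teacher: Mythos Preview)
Your argument is correct and is exactly what underlies the cited result: the paper's own proof consists of a single sentence observing that $\mathcal{D}$ is simple normal crossing and invoking \cite[Lemma~5.4]{Aluffi} directly, whereas you have written out the content of that lemma (Aluffi's SNC formula on the stratum, the residue identification $c(T_{W_\A}(-\log\mathcal{D}))|_{K_\F}=c(T_{K_\F}(-\log E_\F))$, and the projection formula) in detail. So the approach is the same, only at a finer level of granularity.
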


\begin{proof}
In the maximal wonderful compactification $W_\A$, the divisor $\mathcal{D} = W_\A \backslash {C}(\mathcal{A})$ 
is a simple normal crossing divisor, so  the lemma is a restatement of \cite[Lemma 5.4]{Aluffi}. 
\end{proof}

Before the next lemma we describe the operations of quotienting and  restricting hyperplane arrangements. 
Given an arrangement $\mathcal{A} = \{ H_0, \dots , H_n\}$ of hyperplanes in $\CP^d$, let $\hat{\mathcal{A}} = \{\hat{H}_0, \dots , \hat{H}_n\}$ 
denote the central hyperplane arrangement in $\C^{d+1}$ obtained by coning over the arrangement $\A$. 
Given a flat $F \in \mathcal{L}_{\mathcal{A}}$, let $\hat{H}_F = \bigcap_{i \in F} \hat{H}_i$. 
Then the quotient arrangement $\hat{\mathcal{A}} /  F$ is the central arrangement of hyperplanes in the vector space  $\C^{d+1} / \hat{H}_F$ given  by 
the collection $\{ \hat{H}_i / \hat{H}_F\}_{i \in F}$. 
The quotient arrangement $\mathcal{A} / F$ is the projectivization of this arrangement in $\mathbb{P}(\C^{d+1}/\hat{H}_{F})  \cong \CP^{\text{r}(F)-1}$.
The restriction arrangement  $\mathcal{A} | F$ is the arrangement of hyperplanes in 
$H_F \cong \CP^{d-\text{r}(F)}$ given by $\{H_i \cap H_F\}_{i \not \in F}$.

\begin{rem}\label{rem:matroidsQuotRestArrangements}
The matroid associated to the quotient arrangement $\mathcal{A} / F$ is the restriction $M_\A | F$, and the matroid associated to the restriction arrangement $\A | F$ is the contraction $M_\A / F$. 
This reverse correspondence is due to the fact that we are working with arrangements of hyperplanes in projective space instead of point configurations.
\end{rem}

\begin{lemma}\label{lem:prodComp}
Let $W_\A$ be the maximal wonderful compactifiation of the complement of an arrangement of
hyperplanes $\A$ in $\CC\PP^d$.  
For any chain of  flats $\mathcal{F} = \{ \emptyset  = F_0 \subsetneq F_1 \subsetneq \dots \subsetneq F_k \subsetneq F_{k+1} =  \{0, \dots, n\}\}$
of the matroid $M_{\A}$
we have 
$$K_{\mathcal{F}}^\circ \cong \prod_{i= 0}^k C( \mathcal{A}  / F_{i+1} | F_i ).$$
\end{lemma}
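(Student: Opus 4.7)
The plan is to prove the lemma by induction on the length $k$ of the chain $\mathcal{F}$. The base case $k=0$ is immediate: the chain is empty, so $K_\emptyset = W_\A$ and $K_\emptyset^\circ = W_\A \setminus \mathcal{D} = C(\A)$, which matches the single-factor product $C(\A/F_1 | F_0) = C(\A/\{0,\dots,n\} | \emptyset) = C(\A)$, since quotienting by the top flat and restricting by the bottom flat are both trivial operations on arrangements.

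For the inductive step, the central input is the classical De Concini--Procesi product decomposition of the irreducible components of the boundary divisor of a wonderful compactification. For each flat $F \in \hat\L_\A$, there is a canonical isomorphism
$$D_F \;\cong\; W_{\A/F} \,\times\, W_{\A|F}.$$
Moreover, this decomposition is compatible with the rest of the boundary: if $G \in \hat\L_\A$ is incomparable to $F$ then $D_G \cap D_F = \emptyset$; if $G \subsetneq F$ then $D_G \cap D_F$ is the pullback of the corresponding divisor from the $W_{\A/F}$ factor; and if $G \supsetneq F$ then $D_G \cap D_F$ is the pullback of the corresponding divisor from the $W_{\A|F}$ factor.

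Applying this to the smallest flat $F_1$ of the chain $\mathcal{F} = \{F_1 \subsetneq \dots \subsetneq F_k\}$, and using that all the other $F_j$ strictly contain $F_1$, we obtain
$$K_\mathcal{F}^\circ \;\cong\; C(\A/F_1) \,\times\, K_{\mathcal{F}''}^\circ,$$
where $\mathcal{F}'' = \{F_2 \subsetneq \dots \subsetneq F_k\}$ is viewed as a chain of flats of the arrangement $\A|F_1$ via the standard bijection between flats of $M_\A$ containing $F_1$ and flats of $M_{\A|F_1} = M_\A/F_1$. The first factor arises as the open stratum of $W_{\A/F_1}$ obtained by deleting the boundary divisors corresponding to all $G \subsetneq F_1$, while the second factor is the analogous open stratum inside the wonderful compactification of $\A|F_1$. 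By the inductive hypothesis applied to $\A|F_1$,
$$K_{\mathcal{F}''}^\circ \;\cong\; \prod_{i=1}^{k} C\bigl((\A|F_1)/F_{i+1}|F_i\bigr).$$
A straightforward matroid commutation identity shows that $((M/F_1)|F_{i+1})/F_i = (M|F_{i+1})/F_i$ for $F_1 \subseteq F_i \subsetneq F_{i+1}$, which translates (via Remark \ref{rem:matroidsQuotRestArrangements}) into the arrangement-level identification $(\A|F_1)/F_{i+1}|F_i = \A/F_{i+1}|F_i$. Combining this with the $i=0$ factor $C(\A/F_1) = C(\A/F_1|F_0)$ yields the desired product decomposition.

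The main obstacle will be rigorously verifying the compatibility of the De Concini--Procesi product decomposition of $D_{F_1}$ with the other divisors in the chain, and converting the matroid-level commutation identities into honest arrangement-level (and hence complement-level) isomorphisms. Both ingredients are standard, but care is needed because we are working in the projective setting rather than the central/affine setting often used in the wonderful-compactification literature, and because the bijection between flats of $M_\A$ containing $F_1$ and flats of $M_\A/F_1$ must be tracked carefully through the iteration.
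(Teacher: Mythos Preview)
Your proposal is correct and follows essentially the same approach as the paper. Both arguments proceed by induction on the length of the chain, using the De Concini--Procesi product decomposition $D_F \cong W_{\A/F} \times W_{\A|F}$ together with the compatibility of this splitting with the other boundary divisors (those for $G \subsetneq F$ landing in the first factor, those for $G \supsetneq F$ in the second). The only cosmetic difference is that the paper first invokes the closed-stratum decomposition $K_\F \cong \prod_i W_{\A_i}$ from \cite[Section~4.3]{deConciniProcesi} and then removes boundary divisors (treating $k=1$ explicitly before inducting), whereas you peel off the smallest flat $F_1$ directly at the level of open strata starting from $k=0$; the underlying content is the same.
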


\begin{proof}
\comment{
There is an embedding of the maximal wonderful compactification 
$$\phi \colon W_ \A \longrightarrow \CP^d \times \prod_{F \in \hat \L_{\A} }
 \mathbb{P}( \C^{d+1} / \hat{H}_F),$$
described in \cite[Section 4.1]{deConciniProcesi}. 
This embedding takes a point $\bfx \in C(\A)$ to the point $(\bfx, (\phi_F(\bfx))_{F \in \hat \L_{\A}})$, 
where $\phi_F(\bfx)$ is the line in $\mathbb{P}( \C^{d+1} / \hat{H}_F)$ spanned by $\bfx$ and $\hat{H}_F$. The image of $\phi$ is the closure of $\phi(C(\A))$, 
and the projection map $\pi$ of $W_ \A$ onto the first factor $\CP^{d}$ is the blow down $\pi \colon W_ \A \to \CP^d$. 
Following \cite[Section 4]{FeichtnerKozlov}, any point $w \in W_ \A$ defines a  chain of flats 
$\mathcal{G}_w = \{\emptyset  = G_0 \subsetneq G_1 \subsetneq \dots \subsetneq G_t \subsetneq G_{t+1} = \{0, \dots, n\}\}$ in the following way. 
The flat $G_t$ is the maximal flat of $\mathcal{L}_ \A$ such that $\pi(w) \in H_{G_t}$. 
Let $l_t$ be the line in $\CC^{d+1}$ orthogonal to $\hat{H}_{G_t}$ and corresponding to 
$\phi_{G_t}(w) \in \mathbb{P}(\C^{d+1} / \hat{H}_{G_t})$. 
Then $G_{t-1}$ is the maximal flat of $\mathcal{L}_ \A$ such that $\hat{H}_{G_{t-1}}$ contains both $l_t$ and $\hat{H}_{G_t}$,
and $l_{t-1}$ is the line in 
$\CC^{d+1}$ orthogonal to $\hat{H}_{G_{t-1}}$ and corresponding to 
$\phi_{G_{t-1}}(w)$.
The definition of the flats $G_{t-i}$ and the lines $l_{t-i}$ continues analogously for $i \geq 2$ until 
we reach $G_0 = \emptyset$. 
By  \cite[Equation (4.2)]{FeichtnerKozlov}, 
a point $w \in \text{Im}(\phi)$ defining the chain of flats $\mathcal{G}_w  = \{ \emptyset = G_0 \subsetneq G_1 \subsetneq \dots \subsetneq G_t \subsetneq G_{t+1} = \{ 0, \dots, n\} \}$ is completely determined by its images under the projections 
$\phi_{G_1}, \dotsc , \phi_{G_t}, \phi_{G_{t+1}}$, where $\phi_{G_{t+1}} =  \pi$.   
Moreover,
the point $w \in \text{Im}(\phi)$ is in the image of the open stratum $K_{\mathcal{F}}^\circ$ if and only if the chain of flats $\mathcal{G}_w$ is equal to 
$\mathcal{F}$
\cite[Proposition 4.5]{FeichtnerKozlov}.
It follows 
 that the map   
\begin{align*}
\phi_\F \colon K_{\mathcal{F}}^\circ & \to \prod_{i = 0}^k \mathbb{P}(\C^{d+1} / \hat{H}_{F_{i+1}}) \\
 w & \mapsto (\phi_{F_1}(w), \dots, \phi_{F_k}(w), \pi(w))
\end{align*}
is an embedding. 
  We will show that 
 $$\text{Im}(\phi_\F) =  \prod_{i = 0}^k C(\mathcal{A} /  F_{i+1} | F_i )  \subseteq \prod_{i = 0}^k \mathbb{P}(\C^{d+1} / \hat{H}_{F_{i+1}}).$$
If $\mathcal{G}_w = \mathcal{F}$ then $\phi_{F_{i+1}}(w)$ 
is in fact in  $\mathbb{P}(\hat{H}_{F_{i}} / \hat{H}_{F_{i+1}}) \subseteq \mathbb{P}(\C^{d+1} / \hat{H}_{F_{i+1}})$ for all $i$, as otherwise $G_i$ would not be equal to $F_i$. Moreover,
$\phi_{F_{i+1}}(w) \not \in  \mathbb{P}(\hat{H}_{j} / \hat{H}_{F_{i+1}}) $ for any $j \in F_{i + 1} \setminus F_i$, otherwise again $G_i$ would not be equal to $F_i$. 
Therefore $\phi_{F_i}(w) \in C(\mathcal{A} /  F_{i+1} | F_i )$ for all $i$, and so $\text{Im}(\phi_\F) \subseteq \prod_{i = 0}^k C(\mathcal{A} /  F_{i+1} | F_i )$. 
Finally, we claim that  any point $(z_0, \dots , z_k) \in \prod_{i = 0}^k C(\mathcal{A} /  F_{i+1} | F_i )$ is in the image of $\phi_\F$. We will prove the claim via induction on $k$. If $\mathcal{F} =\{\emptyset \subsetneq \{0, \dots, n\}\}$ then 
$\phi_\F$ is the restriction of $\pi$ to ${C(\A)} \subseteq W_{\A}$ and  
the statement is clear. 
Given a flag $\F = \{\emptyset \subsetneq F_1 \subsetneq \dots \subsetneq F_{k-1} \subsetneq F_k \subsetneq \{0, \dots, n\}\}$,  let $\F' \subseteq  \F$ be the flag obtained by forgetting the flat $F_k$ from $\F$. Consider the $1$-parameter family of points $x_t \coloneqq (z_0, \dotsc, z_{k-2}, y(t)) \in \prod_{i = 0}^{k-2} C(\mathcal{A} /  F_{i+1} | F_i ) \times C(\mathcal{A} | F_{k-1} )$ where
$y(t) \in C(\mathcal{A} | F_{k-1} ) \subseteq H_{F_{k-1}}$ is a sequence of points contained on a line $l$ in 
$\hat{H}_{F_{k-1}}$ that projects to $z_{k-1} \in C(\mathcal{A}/F_k | F_{k-1} ) \subseteq \mathbb{P}(\hat{H}_{F_{k-1}} / \hat{H}_{F_{k}} )$ and which converge to the point $z_k \in C(\mathcal{A} | F_k) \subseteq H_{F_k}$ as $t \to 0$. By our induction assumption each $x_t$ is in the image of $\phi_{\F'}$. 
Moreover, our choice of $y(t)$ implies that the point  $(z_0, \dotsc , z_{k-1},z_k)$ is in the closure of this $1$-parameter family of points, and so it is in the image of $\phi_{\F}(K_{\F}^\circ)$. 
This proves the reverse inclusion $ \prod_{i = 0}^k C(\mathcal{A} /  F_{i+1} | F_i ) \subseteq \text{Im}({\phi}_\F)$, concluding the proof. 
}

From \cite[Section 4.3]{deConciniProcesi}, the subvariety $K_{\mathcal{F}}$ of $W_{\A}$ is 
naturally isomorphic to the product 
$\prod_{i= 0}^k W_{\mathcal{A}_i}$, where $W_{\mathcal{A}_i}$ is the maximal wonderful compactification of
the complement of the arrangement $\A_i := \A / F_{i+1} | F_i$ in $\mathbb{P}(\hat{H}_{F_{i}} / \hat{H}_{F_{i+1}} )$. 

Firstly, consider the case when $\mathcal{F} = \{\emptyset \subsetneq F \subsetneq \{0, \dots, n\}\}$. 
Then $D_F = K_{\F} \cong W_{\mathcal{A}/ F} \times W_{\mathcal{A} |F}$.
For a flat $F'$ such that $F' \supsetneq  F$  we have that $K_{\F} \cap D_{F'} \cong W_{\mathcal{A} / F} \times E_{F'} $, where $E_{F'}$ is the proper transform of the subspace $H_{F'}$ of $H_{F}$ under the blow up of $H_{F}$ to the maximal wonderful compactification $W_{\mathcal{A} |F}$.
Similarly, for a flat  $F'$ such that $F' \subsetneq  F$ we have that $K_{\F} \cap D_{F'} \cong E_{F'} \times W_{\mathcal{A} |F}$, where in this case $E_{F'}$ is the divisor of  $W_{\mathcal{A} / F}$ corresponding to the 
proper transform of the subspace $\mathbb{P}(\hat{H}_{F'} /\hat{H}_F)$ of $\mathbb{P}(\C^{d+1}/\hat{H}_F)$. 
By removing all of these intersections we obtain $K^\circ_{\F} \cong C( \mathcal{A} / F ) 
\times C( \mathcal{A} | F)$.
 
The general claim now follows by induction on the length of the chain $\F$, in the same way as in the proof of the canonical isomorphism $K_\F \cong \prod_{i= 0}^k W_{\mathcal{A}_i}$ in \cite{deConciniProcesi}. 
\end{proof}

The following lemma is the key to relate CSM classes of complements of hyperplane arrangements
to CSM cycles of matroids.

\begin{lemma}\label{lem:csm0}
Let $\A$ be an essential hyperplane arrangement in $\CC\PP^d$.
 Then 
$$\Eu(C(\A)) = (-1)^d \beta(M_\A).$$ 
\end{lemma}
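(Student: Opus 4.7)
The plan is to compute $\Eu(C(\A))$ by stratifying $\CP^d$ according to the lattice of flats $\L_\A$ and then Möbius-inverting to extract the contribution of the open stratum $C(\A)$, which will turn out to be $(-1)^d\beta(M_\A)$ after an elementary simplification.

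First I would introduce the open strata $H_F^\circ \coloneqq H_F \setminus \bigcup_{G \supsetneq F} H_G$ for $F \in \L_\A$, noting that the $H_F^\circ$ form a constructible partition of $\CP^d$ and that $C(\A) = H_\emptyset^\circ$. By the additivity of the topological Euler characteristic on constructible partitions, for every flat $F$ one obtains
\[
\Eu(H_F) \;=\; \sum_{G \supseteq F} \Eu(H_G^\circ).
\]
Since the arrangement is essential, $H_F \cong \CP^{d - r(F)}$ whenever $F \neq \{0,\dots,n\}$, so $\Eu(H_F) = d - r(F) + 1$ (and this formula even holds for the top flat, since $H_{\{0,\dots,n\}} = \emptyset$ and $d - (d+1) + 1 = 0$).

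Next I would apply Möbius inversion in the lattice $\L_\A$ to the above identity, obtaining
\[
\Eu(C(\A)) \;=\; \Eu(H_\emptyset^\circ) \;=\; \sum_{G \in \L_\A} \mu(\emptyset, G)\,\bigl(d - r(G) + 1\bigr).
\]
Since $M_\A$ is loopless and has more than one flat, the Möbius identity on the interval $[\emptyset, \{0,\dots,n\}]$ gives $\sum_{G \in \L_\A} \mu(\emptyset, G) = 0$ (this is exactly $\chi_{M_\A}(1) = 0$, corresponding to the factor $(\lambda-1)$ in $\chi_{M_\A}$). Splitting the sum accordingly kills the constant term $d+1$ and leaves
\[
\Eu(C(\A)) \;=\; -\sum_{G \in \L_\A} \mu(\emptyset, G)\, r(G).
\]
Comparing with Definition \ref{def:mobiusBeta}, which reads $\beta(M_\A) = (-1)^{d+1}\sum_{G} \mu(\emptyset, G)\, r(G)$ since $r(M_\A) = d+1$, I would conclude that $\Eu(C(\A)) = -(-1)^{d+1}\beta(M_\A) = (-1)^d \beta(M_\A)$, as claimed.

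The argument is essentially bookkeeping: the only real inputs are the additivity of $\Eu$ on constructible sets, the fact that all non-empty $H_F$ are projective spaces (which is where essentiality is used), and the standard Möbius-theoretic identities for the lattice of flats. Consequently I do not anticipate a serious obstacle; the mildly subtle point is keeping track of the top flat $\{0,\dots,n\}$ (where $H_F$ is empty) and invoking the vanishing $\sum_G \mu(\emptyset,G) = 0$ to absorb the constant $(d+1)$ term into a clean beta-invariant expression.
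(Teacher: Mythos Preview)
Your proof is correct. The paper's own proof is a one-line citation: it invokes the known fact (from \cite{Cohenetal}) that $\Eu(C(\A))=\overline{\chi}_{M_\A}(1)$ and then reads off $(-1)^d\beta(M_\A)$ from Definition~\ref{def:mobiusBeta}. What you have written is essentially a self-contained derivation of that cited fact: your stratification by the $H_F^\circ$ and the M\"obius inversion together yield $\Eu(C(\A))=-\sum_G \mu(\emptyset,G)\,r(G)$, which is precisely the first formula for $\beta$ in Definition~\ref{def:mobiusBeta} up to the sign $(-1)^{d+1}$. So the two arguments are the same computation at different levels of detail; yours has the advantage of being independent of the external reference, while the paper's version keeps the lemma to two lines by outsourcing the standard Euler-characteristic calculation.
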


\begin{proof}
The Euler characteristic of $C(\A)$ is the evaluation of the reduced characteristic 
polynomial $\overline{\chi}_{M_\A}(\lambda)$ at $\lambda = 1$ \cite[Section 1.4.5]{Cohenetal},
which is equal to $(-1)^d \beta(M_\A)$ 
by Definition \ref{def:mobiusBeta}.
\end{proof}

\begin{lemma}\label{lem:eulerchar} 
Let $W_\A$ be the maximal wonderful compactification of the complement of an arrangement of
hyperplanes $\A$ in $\CC\PP^d$.  
For a chain of  flats $\mathcal{F} = \{ \emptyset  = F_0 \subsetneq F_1 \subsetneq \dots \subsetneq F_k \subsetneq F_{k+1} =  \{0, \dots, n\}\}$
of the matroid $M_{\A}$, we have
$$\Eu(K_{\mathcal{F}}^\circ)
= w_{\csm_k(M_\A)}(\sigma_{\mathcal{F}}).$$
\end{lemma}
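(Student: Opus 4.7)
The plan is to combine the three preceding lemmas almost directly; the statement is essentially a book-keeping exercise once they are in place.

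First I would invoke Lemma \ref{lem:prodComp} to write
$$K_{\mathcal{F}}^\circ \;\cong\; \prod_{i=0}^k C(\mathcal{A}/F_{i+1}\,|\,F_i).$$
Since the topological Euler characteristic is multiplicative on finite products of complex algebraic varieties, this gives
$$\Eu(K_{\mathcal{F}}^\circ) \;=\; \prod_{i=0}^k \Eu\bigl(C(\mathcal{A}/F_{i+1}\,|\,F_i)\bigr).$$

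Next I would identify each factor as the complement of an essential hyperplane arrangement with a known matroid. By Remark \ref{rem:matroidsQuotRestArrangements}, applied in succession (first quotient by $F_{i+1}$, then restrict to $F_i$), the matroid associated to $\mathcal{A}/F_{i+1}\,|\,F_i$ is the minor $M_\A|F_{i+1}/F_i$. This minor is loopless (as $F_i$ is a flat of $M_\A|F_{i+1}$), has rank $r(F_{i+1})-r(F_i)$, and the arrangement lives in a projective space of dimension $r(F_{i+1})-r(F_i)-1$, so it is essential. Lemma \ref{lem:csm0} then yields
$$\Eu\bigl(C(\mathcal{A}/F_{i+1}\,|\,F_i)\bigr) \;=\; (-1)^{\,r(F_{i+1})-r(F_i)-1}\,\beta\bigl(M_\A|F_{i+1}/F_i\bigr).$$

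Finally I would collect the signs via the telescoping sum
$$\sum_{i=0}^{k}\bigl(r(F_{i+1})-r(F_i)-1\bigr) \;=\; r(F_{k+1}) - r(F_0) - (k+1) \;=\; (d+1) - 0 - (k+1) \;=\; d-k,$$
so that
$$\Eu(K_{\mathcal{F}}^\circ) \;=\; (-1)^{d-k}\prod_{i=0}^{k} \beta\bigl(M_\A|F_{i+1}/F_i\bigr),$$
which is precisely the weight $w_{\csm_k(M_\A)}(\sigma_{\mathcal{F}})$ from Definition \ref{def:chernweight}.

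There is no real obstacle here: Lemma \ref{lem:prodComp} supplies the geometric decomposition, Lemma \ref{lem:csm0} converts each factor's Euler characteristic to a beta invariant, and Remark \ref{rem:matroidsQuotRestArrangements} matches the underlying matroids. The only mildly delicate point worth double-checking is that each intermediate arrangement $\mathcal{A}/F_{i+1}\,|\,F_i$ is essential so that Lemma \ref{lem:csm0} applies, and that the sign exponent telescopes to exactly $d-k$; both are straightforward once rank functions are read off correctly.
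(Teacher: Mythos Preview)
Your proof is correct and follows essentially the same route as the paper: apply Lemma~\ref{lem:prodComp}, use multiplicativity of the Euler characteristic, identify the matroid of each factor via Remark~\ref{rem:matroidsQuotRestArrangements}, and invoke Lemma~\ref{lem:csm0}. The only difference is that you spell out the essentiality check and the telescoping sign computation explicitly, whereas the paper leaves these implicit.
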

\begin{proof}
By the behaviour of the Euler characteristic under Cartesian products and Lemma 
\ref{lem:prodComp}, we have
$\text{Eu}(K_{\mathcal{F}}^\circ) =  \prod_{i = 0}^k \text{Eu}(C(\A / F_{i+1} | F_i))$.
Recall that the matroid associated to  the arrangement $\A  / F_{i+1} |  F_i$ is $M_{\A} | F_{i+1} / F_i$. 
In view of Lemma \ref{lem:csm0}, we then have
$\text{Eu}(K_{\mathcal{F}}^\circ) = (-1)^{d-k}  \prod_{i=0}^{k} \beta(M_{\A} | F_{i+1}/F_i)$,
which agrees exactly with $w_{\csm_k(M_\A)}(\sigma_{\mathcal{F}})$ from Definition \ref{def:chernweight}. 
\end{proof}

\begin{rem}\label{rem:BB}
Bertrand and Bihan have a method for equipping skeleta of stable intersections of  tropical hypersurfaces with integer weights 
to produce balanced tropical cycles \cite{BertrandBihan}.  
In their construction, the weights are up to sign equal to the Euler characteristic of a non-degenerate complete intersection 
in a complex torus \cite[Theorem 5.9]{BertrandBihan}. 
The situation they consider overlaps with our own in the case of the stable intersection of fan tropical hyperplanes. 
These intersections give rise to the class of Bergman fans of cotransversal matroids. 
Lemma \ref{lem:eulerchar} shows that our weights  correspond to the same Euler characteristics, so that up to sign, the tropical CSM cycles we have defined coincide with the tropical cycles defined by Bertrand and Bihan for Bergman fans of cotransversal matroids.  
\end{rem}

\begin{proof}[Proof of Theorem \ref{thm:csmcomplement}]
Using the perfect pairing given in \eqref{eqn:compPairingExplicit},
it is enough to show that for any chain of flats 
$\F = \{\emptyset \subsetneq F_1 \subsetneq \dotsb \subsetneq F_k  \subsetneq  \{0, \dots, n\} \}$ we have 
$$x_{F_1} \dotsb x_{F_k} \cap \bcsm_k(\mathbbm{1}_{{C}(\mathcal{A})}) 
= x_{F_1} \dotsb x_{F_k} \cap \csm_k(M_\A).$$
The right hand side of the above equation is
$w_{\csm_k(M_\A)}(\sigma_{\mathcal{F}})$, 
which by Lemma \ref{lem:eulerchar} is equal to $\Eu(K_{\mathcal{F}}^\circ)$.
For the left hand side, first notice that 
$$\bcsm(\mathbbm{1}_{{C}(\mathcal{A})}) = c(T_{W_\A}(-\log\mathcal{D})) \cap [W_\A],$$
by setting $\F = \emptyset$ in Lemma \ref{lem:Aluffi}. 
Applying Lemmas \ref{lem:Aluffi} and \ref{lem:eulerchar} once again we obtain
\begin{align*}
x_{F_1} \dotsb x_{F_k} \cap \bcsm_k(\mathbbm{1}_{{C}(\mathcal{A})})& = 
  x_{F_1} \dotsb x_{F_k} \cap \Big( c_{d-k}(T_{W_\A}(-\log\mathcal{D}))  \cap [W_\A] \Big) \\
& = 
\Big( x_{F_1} \dotsb x_{F_k}c_{d-k}(T_{W_\A}(-\log\mathcal{D})) \Big) \cap [W_{\A}] 
\\
& = 
\bcsm_0(\mathbbm{1}_{K^\circ_{\mathcal{F}}})
\\
& = \Eu(K^\circ_{\mathcal{F}}),
\end{align*} 
completing the proof of the theorem. 
\end{proof}

\begin{rem}
There are other wonderful compactifications of the complement of a hyperplane arrangement $\A$
that generalize the maximal one considered here. These compactifications  arise from subsets of $\L_\A$ called \textbf{building sets} 
\cite{deConciniProcesi}. 
For any building set $\mathcal{G} \subseteq \mathcal{L}_ \A$ 
there is a {\bf nested set compactification} $W_{\mathcal{G}}$ of $C(\A)$, 
and the irreducible components of $\mathcal{D}_{\mathcal G} \coloneqq W_ \mathcal{G} 
\setminus C(\A)$ are in bijection with the elements of $\mathcal{G}$.
Given a building set $\mathcal{G} \subseteq \L_\A$ there is a birational map $f \colon W_{\mathcal{A}} \to W_{\mathcal{G}}$ 
consisting of a composition of blow downs of the exceptional divisors (and their pushforwards) corresponding to the flats in $\L_\A \backslash \mathcal{G}$. 
These blowdowns are \textbf{adaptive} in the sense of \cite[Lemma 1.3]{Aluffi:ChernOfBlowUps} and therefore we have $f_{\ast} \bcsm(\mathbbm{1}_{{C}(\A)})  =  \bcsm(\mathbbm{1}_{\tilde{{C}}(\A)})  \in \CH_{\ast}(W_{\mathcal{G}})$, where $\tilde{C}(\A)$ denotes the complement of $\A$ as a subset of $W_{\mathcal{G}}$. In this sense, the combinatorial  CSM cycle $\csm(M_{\A})$ defined here encodes the CSM class of the complement of the arrangement in any  wonderful compactification. 
\end{rem}

\section{CSM cycles are matroid valuations}\label{Sec:val}

In this section we prove that  
CSM cycles behave valuatively with respect to matroid polytope subdivisions.
We start with some general background on matroid polytopes and their subdivisions. 

To a collection  $\Gamma$ of subsets of 
$\{0,\dotsc,n\}$ we associate the polytope
$$\textstyle Q(\Gamma)\coloneqq \convex \{\mathbf e_S : S \in \Gamma \} \subseteq \RR^{n+1}, \quad \text{ where } \mathbf e_S\coloneqq \sum_{i \in S} \mathbf e_i.$$ 
Given a matroid $M\in\Mat_{n+1}$,
the \textbf{matroid polytope} $Q(M)$ is defined as the polytope 
$Q(\Bases(M)) \subseteq \RR^{n+1}$, where 
$\Bases(M)$ denotes  the collection of bases of $M$. 
The dimension of $Q(M)$ is equal to $n-c+1$, where $c$ is the number of connected components of $M$. 

Every face of a matroid polytope is again a matroid polytope, as we explain below.
If $Q \subseteq \RR^{n+1}$ is a polytope and $\mathbf v \in \RR^{n+1}$, we
denote by $\face_{\mathbf v}(Q)$ the face of $Q$ consisting of all $\mathbf x
\in Q$ maximizing the dot product with the vector $\mathbf v$, that is,
$$\face_{\mathbf v}(Q) \coloneqq \{ \mathbf x \in Q \mid \mathbf x \cdot \mathbf v \geq  \bfx'  \cdot \mathbf v  \text{ for all } \bfx' \in Q \}.$$ 
For any vector $\mathbf v \in \R^{n+1}$ there exists a unique cone of the form 
$\sigma_\F \coloneqq \cone(\bfe_{S_1}, \dotsc, \bfe_{S_{\ell}}) + \RR \!\cdot\! \bfe_{\{0,\dotsc, n\}}$ with $\F$ a flag of subsets 
$\{\emptyset = S_0 \subsetneq S_1 \subsetneq \dotsb \subsetneq S_{\ell} \subsetneq S_{\ell+1} = \{0,\dotsc,n\}\}$ such that $\mathbf v \in \interior(\sigma_\F)$.
In this case, the greedy algorithm for matroids implies that 
\begin{equation}\label{eq:face}
\face_{\mathbf v}(Q(M))  = Q(M|S_{1}/S_0 \oplus M|S_{2}/S_1 \oplus \dotsb \oplus M|S_{\ell+1}/S_{\ell}).
\end{equation}
Note that $\face_{\mathbf v}(Q(M))$ is the matroid polytope of a matroid with at least $\ell+1$ connected components, and so $\face_{\mathbf v}(Q(M))$ has dimension at most $n-\ell$.

Let $\Vertices(Q)$ denote the set of vertices of a polytope $Q$. 
A \textbf{subdivision} of a
$d$-dimensional  polytope $Q$ is a collection of 
$d$-dimensional  polytopes $\mathcal S=\{P_1, \ldots, P_m\}$
 such that for all $i$ we have  $\Vertices(P_i) \subseteq \Vertices(Q)$, $ Q = P_1 \cup \cdots \cup P_m $, and if an intersection $P_i \cap P_j$ is nonempty then it is a proper face of both $P_i$ and $P_j$. 
If all polytopes in a subdivision $\mathcal{S}$ of a matroid polytope $Q(M)$ are again matroid polytopes, then $\mathcal{S}$ is called a \textbf{matroid polytope subdivision}. 
 A {\bf face} of $\mathcal S$ is a face of any of the $P_i$, and the set of faces of $\mathcal S$ is denoted $\faces(\mathcal{S})$.  
 A  face of $\mathcal{S}$  is an  {\bf interior face} if it is not contained in the boundary of $Q$. The set of interior faces of $\mathcal S$ is denoted by $\interior (\mathcal S)$.

\begin{exa}
Let $n=3$, $d=1$, and consider the uniform matroid $M = U_{2,4}$. The matroid polytope $Q(M) \subseteq \RR^4$ is a regular octahedron, contained in the hyperplane $x_0 + x_1 + x_2 + x_3 = 2$. This matroid polytope admits three different non-trivial matroid subdivisions, each of which decomposes it into two square pyramids; see Figure \ref{fig:subdivision}.
\begin{figure}[ht]
\begin{center}
\includegraphics[scale=1.3]{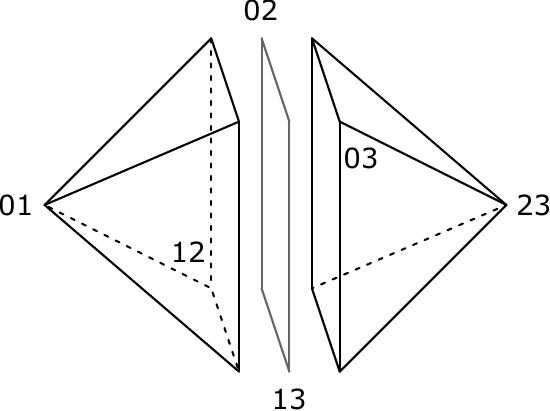}
\caption{A matroid subdivision of the polytope $Q(U_{2,4})$.}
\label{fig:subdivision}
\end{center}
\end{figure}
\end{exa}

\begin{defi}\label{def:valuation}
Let $\mathcal G$ be an arbitrary abelian group. A function $f \colon \Mat_{n+1} \rightarrow \mathcal G$ is a {\bf valuation under matroid polytope subdivisions}, or simply a {\bf valuation}\footnote{This use of the term \emph{valuation} in this way is standard in convex geometry. It should not be confused with the notion of a matroid valuation found in the theory of valuated matroids.}, 
 if for any matroid subdivision $\mathcal S$ of a matroid polytope $Q = Q(M)$ we have
\begin{equation*}\label{wellbeh}
\textstyle f(M) = \sum_{F \in \interior (\mathcal S)} \, (-1)^{\dim(Q)-\dim(F)} f(M_F),
\end{equation*}
where $M_F$ denotes the matroid whose matroid polytope is $F$.
\end{defi}

There is a slightly different definition of matroid valuations which captures more clearly the fact that $f$ can be computed by inclusion-exclusion on matroid polytopes \cite[Definition 3.1]{AFR}. 
This definition and the one given above are both equivalent by \cite[Theorem 3.5]{AFR}. 

\begin{exa}\label{ex:betavaluation}
The beta invariant and the characteristic polynomial from Definition \ref{def:mobiusBeta} are matroid valuations; see \cite{Speyer, AFR}. 
This implies that if $M \in \Mat_{n+1}$, for any subdivision $\mathcal S$ of $Q(M)$ we have
$$ \beta(M) = \sum_{\substack{F \in \interior  (\mathcal S) \\ \dim(F) = n}}  \beta(M_{F}),$$
since the beta invariant of any disconnected matroid is equal to zero. 
\end{exa}

\begin{exa}\label{p:valuation} For any $X \subseteq \R^{n+1}$, denote by $i_X \colon \Mat_{n+1} \to \ZZ$ the function assigning $1$ to a matroid $M$ if $Q(M) \cap X \neq \emptyset$, and $0$ otherwise. If $X$ is convex, and is either open or closed, it was shown in \cite[Proposition 4.5]{AFR} that $i_X$ is a matroid valuation.
\end{exa}

The following is the main result of this section. 
Recall that the set of fan tropical cycles $\mathcal{Z}_k(\R^{n+1} / \mathbf{1})$ forms a group 
under the operation of taking the union of supports and adding the weight functions.

\begin{thm}\label{thm:valuation}
For any $k$, the function $\csm_k \colon \Mat_{n+1} \to  \mathcal Z_k(\R^{n+1} / \mathbf{1})$ sending $M$ to $\csm_k(M)$ is a valuation under matroid polytope subdivisions.
\end{thm}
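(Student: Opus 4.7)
The plan is to reduce the statement to a pointwise verification on the weight function, and then exploit that the beta invariant is itself a valuation. Since a tropical cycle in $\mathcal{Z}_k(\R^{n+1}/\mathbf{1})$ is determined (up to equivalence) by the weight it assigns to each point in the relative interior of a top-dimensional cone, the equality
$$\csm_k(M) \;=\; \sum_{F\in \interior(\mathcal{S})}(-1)^{\dim Q(M)-\dim F}\,\csm_k(M_F)$$
can be checked pointwise on a common refinement of the fan structures involved. I would therefore first reduce to showing that for each $\bfv\in \R^{n+1}/\mathbf{1}$ lying in the relative interior of some $k$-dimensional cone, the function $M \mapsto w_{\csm_k(M)}(\bfv)$ is a $\ZZ$-valued matroid valuation on $\Mat_{n+1}$.

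Fixing such a $\bfv$ in the relative interior of the cone $\sigma_\F$ associated to a chain of subsets $\F = \{\emptyset = S_0 \subsetneq S_1 \subsetneq \cdots \subsetneq S_{k+1} = \{0,\ldots,n\}\}$, I would use that $\beta$ vanishes on matroids with loops or with several non-loop components to rewrite Definition \ref{def:chernweight} uniformly for all $M \in \Mat_{n+1}$ as
$$w_{\csm_k(M)}(\bfv) \;=\; (-1)^{d-k}\prod_{i=0}^{k}\beta(M|S_{i+1}/S_i) \;=:\; (-1)^{d-k}\,\Phi_\F(M),$$
both sides vanishing in all the degenerate cases (some $S_i$ not a flat of $M$, or $M$ having a loop). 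The task reduces to proving that $\Phi_\F$ is a valuation.

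Next I would invoke the Ardila--Fink--Rinc\'on criterion \cite[Theorem~3.5]{AFR}: $f$ is a valuation if and only if every linear relation $\sum_i a_i \mathbbm{1}_{Q(M_i)}=0$ among matroid polytope indicators forces $\sum_i a_i f(M_i)=0$. Given such a relation, the operation $P\mapsto \face_\bfv(P)$ preserves linear relations among polytope indicators, so $\sum_i a_i \mathbbm{1}_{\face_\bfv(Q(M_i))}=0$. By \eqref{eq:face}, each $\face_\bfv(Q(M_i))$ factors as the Cartesian product $\prod_{j=0}^k Q(M_i|S_{j+1}/S_j)$, with the $j$-th factor living in the coordinate subspace $\R^{S_{j+1}\setminus S_j}$. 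Because these subspaces are pairwise disjoint, each $\mathbbm{1}_{\face_\bfv(Q(M_i))}$ factors as $\prod_j \mathbbm{1}_{Q(M_i|S_{j+1}/S_j)}$, and this product presentation corresponds injectively to an element of the tensor product of the function spaces on each block. Applying the tensor product of the beta valuations on each ground set $S_{j+1}\setminus S_j$ (which exist by Example \ref{ex:betavaluation}) to the lifted relation then gives $\sum_i a_i \prod_j \beta(M_i|S_{j+1}/S_j)=0$, as required.

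The hard part is the last step: in general a product of valuations is not itself a valuation, so one cannot simply multiply beta invariants. What saves the argument is that the minors $M|S_{j+1}/S_j$ live on pairwise disjoint ground sets $S_{j+1}\setminus S_j$, so their polytopes occupy disjoint coordinate blocks and their indicator functions multiply as in a genuine Cartesian product. Making this rigorous requires a careful choice of the $\ZZ$-module of polytope indicators on each block and the observation that the beta valuation extends linearly to that module; once this is in place, the beta valuation can be applied one block at a time to deliver the desired relation.
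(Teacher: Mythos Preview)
Your approach is correct and takes a genuinely different route from the paper's.

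\textbf{How the paper argues.} The paper works directly with a fixed subdivision $\mathcal S$ of $Q(M)$. For a flag $\F$ and $\bfv\in\interior(\sigma_\F)$ it proves two auxiliary results: Lemma~\ref{l:cancellation}, a cancellation identity showing that the alternating sum over interior faces $F$ with $\face_\bfv(F)\supseteq G$ collapses to the indicator of $G\subseteq\face_\bfv(Q)$; and Corollary~\ref{c:subdivision} (via Lemma~\ref{l:product}), that any matroid subdivision of $Q(M_1\oplus\cdots\oplus M_{k+1})$ is a product of subdivisions of the factors. Combining these, the induced subdivision of the face $\face_\bfv(Q(M))=\prod_j Q(M|S_{j+1}/S_j)$ factors, and then the known valuativity of $\beta$ on each factor (Example~\ref{ex:betavaluation}) finishes the computation of the weight.

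\textbf{How your argument differs.} You bypass both lemmas by invoking the indicator-relation characterisation of valuations from \cite{AFR}, together with the polytope-algebra fact that $\mathbbm 1_P\mapsto \mathbbm 1_{\face_\bfv(P)}$ extends linearly. The block structure $\face_\bfv(Q(M))=\prod_j Q(M|S_{j+1}/S_j)$ then lets you apply $\beta$ one coordinate block at a time: fixing all coordinates except those in $S_{j+1}\setminus S_j$ turns the relation into an indicator relation on $\Mat_{S_{j+1}\setminus S_j}$, which $\beta$ annihilates. Iterating yields $\sum_i a_i\prod_j\beta(M_i|S_{j+1}/S_j)=0$.

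\textbf{What each buys.} Your argument is shorter and more conceptual, but it rests on the exterior input that the face map is linear on polytope indicators; this is classical (McMullen's polytope algebra, or see also the valuation machinery in Derksen--Fink), but you should cite it explicitly. The paper's route is self-contained and yields the two lemmas above as by-products of independent interest --- in particular Corollary~\ref{c:subdivision}, that matroid subdivisions respect direct-sum decompositions. At the core the two proofs agree: both reduce to the product decomposition of $\face_\bfv(Q(M))$ and the valuativity of $\beta$ on each factor; they differ only in how they mediate between the subdivision of $Q(M)$ and that of its face.

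One small point: your claim that both sides of $w_{\csm_k(M)}(\bfv)=(-1)^{d-k}\prod_j\beta(M|S_{j+1}/S_j)$ vanish whenever some $S_j$ fails to be a flat is correct (an element of $\overline{S_j}\setminus S_j$ becomes a loop in some $M|S_{l+1}/S_l$), but deserves a one-line justification rather than just the parenthetical remark.
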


In order to prove Theorem \ref{thm:valuation} we need the following  lemmas. 

\begin{lemma}\label{l:cancellation}
Let $\mathcal S$ be a matroid polytope subdivision of a matroid polytope $Q \subseteq \RR^{n+1}$. 
For any fixed nonzero vector $\mathbf v \in \RR^{n+1}$ and 
face $F_0$ of $\mathcal S$, 
the function $j_{F_0, \mathbf{v}} \colon\faces(\mathcal S) \to \ZZ$
defined as 
\begin{equation}\label{eq:definitionj}
j_{F_0, \mathbf{v}}(F) \coloneqq
\begin{cases}
1 & \text{if } \face_{\mathbf v}(F) \supseteq F_0,\\
0 & \text{otherwise}
\end{cases}
\end{equation}
satisfies
 \begin{equation}\label{eq:sumlemma}
\sum_{F \in \interior (\mathcal S)} \, (-1)^{\dim(Q)-\dim(F)} \, j_{F_0, \mathbf{v}}(F) = 
\begin{cases}
1 & \text{if }  F_0 \subseteq \face_{\mathbf v}(Q),\\
0 & \text{otherwise.}
\end{cases}
\end{equation}
\end{lemma}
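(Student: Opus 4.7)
The plan is to interpret the left-hand side as a compactly supported Euler characteristic in the star fan of $F_0$ and evaluate it geometrically.

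First I will reduce to the case where $\mathbf{v}$ is constant on $F_0$. If it is not, then $\face_{\mathbf{v}}(F) \supseteq F_0$ fails for every $F \supseteq F_0$ (since $\mathbf{v}$ must be constant on any face of $F$), so the left-hand side vanishes; and two points of $F_0$ with different $\mathbf{v}$-values cannot both maximize $\mathbf{v}$ on $Q$, so $F_0 \not\subseteq \face_{\mathbf{v}}(Q)$ and the right-hand side vanishes too. Fix $x_0 \in F_0$ and set $c_0 := \mathbf{v} \cdot x_0$ and $L := \{x : \mathbf{v} \cdot x \leq c_0\}$. Then for $F \supseteq F_0$ we have $\face_{\mathbf{v}}(F) \supseteq F_0 \iff F \subseteq L$, and $F_0 \subseteq \face_{\mathbf{v}}(Q) \iff Q \subseteq L$.

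Next I will pass to the star fan. Let $V := \mathrm{aff}(Q)/\mathrm{lin}(F_0)$ where $\mathrm{lin}(F_0) := \mathrm{aff}(F_0) - x_0$, and for each face $F \supseteq F_0$ let $\bar\sigma_F \subseteq V$ be the image of $\cone(F - x_0)$. The collection $\Sigma := \{\bar\sigma_F : F \supseteq F_0\}$ is a polyhedral fan in $V$ with $|\Sigma| = \bar\sigma_Q$ full-dimensional in $V$, and $\mathbf{v}$ induces a functional $\overline{\mathbf{v}} \in V^*$ that is nonzero precisely when $\mathbf{v}$ is non-constant on $Q$. With $H^- := \{\overline{\mathbf{v}} \leq 0\}$ and $H^0 := \{\overline{\mathbf{v}} = 0\}$, the translated conditions are $F \subseteq L \iff \bar\sigma_F \subseteq H^-$ and $F \in \interior(\mathcal{S}) \iff \bar\sigma_F \not\subseteq \partial |\Sigma|$. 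Using $\dim Q - \dim F = \dim V - \dim \bar\sigma_F$, the LHS of \eqref{eq:sumlemma} rewrites as $(-1)^{\dim V} \sum_\sigma (-1)^{\dim \sigma}$, where $\sigma$ ranges over interior cones of $\Sigma$ contained in $H^-$.

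The main computation will apply additivity of the compactly supported Euler characteristic $\chi_c$ along the stratification of $V$ by relative interiors $\mathrm{relint}(\sigma)$ of cones $\sigma \in \Sigma$, to the set $U := H^- \cap \mathrm{relint}(|\Sigma|)$. Each cone contributes $\chi_c(\mathrm{relint}(\sigma) \cap U)$: boundary cones contribute $0$ because $\mathrm{relint}(\sigma) \cap \mathrm{relint}(|\Sigma|) = \emptyset$; an interior $\sigma \subseteq H^-$ contributes $(-1)^{\dim \sigma}$; an interior $\sigma$ strictly above $H^0$ contributes $0$; and an interior $\sigma$ straddling $H^0$ contributes $0$, since $\mathrm{relint}(\sigma) \cap H^-$ splits into the open half $\mathrm{relint}(\sigma) \cap \{\overline{\mathbf{v}} < 0\}$ (with $\chi_c = (-1)^{\dim \sigma}$) and the relatively open hyperplane slice $\mathrm{relint}(\sigma) \cap H^0$ (with $\chi_c = (-1)^{\dim \sigma - 1}$). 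This yields LHS $= (-1)^{\dim V} \chi_c(U)$.

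It remains to evaluate $\chi_c(U)$. If $|\Sigma| \subseteq H^-$ then $U = \mathrm{relint}(|\Sigma|)$ is an open convex subset of $V$ of dimension $\dim V$, so $\chi_c(U) = (-1)^{\dim V}$ and LHS $= 1$. Otherwise, since $|\Sigma|$ is full-dimensional and $\overline{\mathbf{v}}$ is linear, either $\overline{\mathbf{v}} > 0$ throughout $\mathrm{relint}(|\Sigma|)$ (so $U = \emptyset$) or $\overline{\mathbf{v}}$ takes both signs there, in which case the same open-half-plus-hyperplane-slice decomposition forces $\chi_c(U) = 0$ and LHS $= 0$; both possibilities agree with the RHS indicator $[|\Sigma| \subseteq H^-] = [F_0 \subseteq \face_{\mathbf{v}}(Q)]$. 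The main obstacle will be the sign bookkeeping in the $\chi_c$-decompositions for straddling cones and in the final case analysis; once those are in hand the identity falls out mechanically.
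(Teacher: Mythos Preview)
Your argument is correct and takes a genuinely different route from the paper's proof. One small point: you should choose $x_0$ in the \emph{relative interior} of $F_0$, so that the closed star of $F_0$ in $\mathcal S$ is a neighbourhood of $x_0$ in $Q$ and hence $|\Sigma|$ really equals the image $\bar\sigma_Q$ of $\cone(Q-x_0)$. With that fixed, your star-fan reduction, the case analysis on $\chi_c(\mathrm{relint}(\sigma)\cap U)$, and the final evaluation of $\chi_c(U)$ all go through.

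The paper proceeds quite differently. After the same initial reduction, it constructs a closed full-dimensional convex body $C$ contained in the closed half-space $\{\mathbf v\cdot\mathbf x\ge \mathbf v\cdot\mathbf x_0\}$, touching the bounding hyperplane only at $\mathbf x_0$ and containing every vertex of $Q$ strictly above that hyperplane in its interior. It then checks directly that $j_{F_0,\mathbf v}=i_C-i_{\interior C}$, where $i_X(M)=1$ if $Q(M)\cap X\ne\emptyset$ and $0$ otherwise. Since $i_C$ and $i_{\interior C}$ are known matroid valuations by \cite[Proposition~4.5]{AFR}, the alternating-sum identity follows immediately.

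The paper's proof is shorter and more conceptual once one has the cited valuation result, but it imports that result as a black box. Your approach is entirely self-contained and makes transparent that the statement is a purely polyhedral Euler-characteristic identity having nothing to do with matroids. Both arguments in fact prove the lemma for arbitrary polytopal subdivisions; yours makes this evident, while in the paper's version one has to observe that the cited valuation property holds in that generality too.
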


\begin{proof}
If the value of $\mathbf x \cdot \mathbf v$ is not constant when restricted to  all $\mathbf x \in F_0$, then 
$\face_{\mathbf v}(F) \nsupseteq F_0$ for any $F \in \faces(\mathcal S)$ and also for $F=Q$. In this case $j_{F_0, \mathbf{v}} \equiv 0$ and the statement is trivially true. 
Suppose $\mathbf v \cdot \mathbf x$ is constant for all $\mathbf x\in F_0$, and let $\mathbf x_0$ be a point in the relative interior of $F_0$. Consider the open half-space $H \colon= \{ \mathbf x \in \RR^{n+1} \mid \mathbf v \cdot \mathbf x>\mathbf v \cdot \mathbf x_0\}$. 
Let $C$ be a closed full-dimensional convex subset of $\overline{H}$ such that   $C \cap \partial \overline H= \mathbf x_0$ and the set $\Vertices(Q) \cap H$ is contained in the relative interior of $C$.
Let $i_C$ and $i_{\interior C}$ denote the matroid valuations discussed in Example \ref{p:valuation}.
We will show that $j_{F_0, \mathbf{v}} = i_C - i_{\interior C}$. 
The statement of the lemma will then follow directly from the fact that $i_C$ and $i_{\interior C}$ are both matroid valuations. 

Consider any face $F$ of the subdivision $\mathcal S$. Assume first that $F \subseteq \RR^{n+1} \backslash H$, so $i_{\interior C}(F) = 0$. In this case, $i_C(F) = 1$ if and only if $F \cap C = \mathbf \{\mathbf x_0\}$, which is equivalent to $F \supseteq F_0$ and thus to $\face_{\mathbf v}(F) \supseteq F_0$. It follows that $i_C(F) - i_{\interior C}(F) = j_{F_0, \mathbf{v}}(F)$, as claimed. 
If on the other hand $F \cap H \neq \emptyset$ then,  by the 
definition of $C$, we have $i_C(F) = i_{\interior C}(F) = 1$. Moreover, 
$\face_{\mathbf v}(F)$ is completely contained in a parallel translate 
of the hyperplane $\partial H$ lying inside $H$, and thus $j_{F_0, 
\mathbf{v}}(F) = 0$. Therefore $j_{F_0, \mathbf{v}}(F) = i_C(F) - 
i_{\interior C}(F)$ for all $F \in \faces(\mathcal{S})$, as desired. 
\end{proof}

If $\mathcal S_1$ is a subdivision of the polytope $Q_1 \subseteq \RR^{k_1}$ and $\mathcal S_2$ is a subdivision of the polytope $Q_2 \subseteq \RR^{k_2}$, the subdivision $\mathcal S_1 \times \mathcal S_2$ of $Q_1 \times Q_2 \subseteq \RR^{k_1+k_2}$ consists of all polytopes of the form $P_1 \times P_2$ with $P_1 \in \mathcal S_1$ and $P_2 \in \mathcal S_2$.

\begin{lemma}\label{l:product}
Let $Q_1 \subseteq \RR^{k_1}$ and $Q_2 \subseteq \RR^{k_2}$ be polytopes, and suppose $\mathcal S$ is a subdivision of the polytope $Q \coloneqq Q_1 \times Q_2 \subseteq \RR^{k_1 + k_2}$. If each  edge in $\faces(\mathcal S)$ is also an edge of $Q_1 \times Q_2$ then $\mathcal S = \mathcal S_1 \times \mathcal S_2$ with $\mathcal S_1$ a subdivision of $Q_1$ and $\mathcal S_2$ a subdivision of $Q_2$.
\end{lemma}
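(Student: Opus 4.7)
The plan is to first show that each polytope $P \in \mathcal{S}$ is itself a Cartesian product $\pi_1(P) \times \pi_2(P)$, where $\pi_i \colon \RR^{k_1 + k_2} \to \RR^{k_i}$ denote the coordinate projections, and then to harvest these factors into subdivisions of $Q_1$ and $Q_2$ whose product recovers $\mathcal S$.

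For the first step, observe that the edges of $Q_1 \times Q_2$ come in only two flavors: those of the form $e \times \{w\}$ with $e$ an edge of $Q_1$ and $w$ a vertex of $Q_2$, and those of the form $\{v\} \times f$ with $v$ a vertex of $Q_1$ and $f$ an edge of $Q_2$. The hypothesis thus forces every edge of every $P \in \mathcal{S}$ to be parallel to $\RR^{k_1} \oplus 0$ or $0 \oplus \RR^{k_2}$. I would then invoke the following general fact about convex polytopes: if every edge of $P \subseteq \RR^{k_1} \oplus \RR^{k_2}$ is parallel to one of the two coordinate subspaces, then $P = P_1 \times P_2$ with $P_i \subseteq \RR^{k_i}$. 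The cleanest argument is via normal fans: at each vertex $v$ of $P$, the tangent cone is generated by the edge directions at $v$ and splits as a direct sum $T_v^{(1)} \oplus T_v^{(2)}$ with $T_v^{(i)} \subseteq \RR^{k_i}$, so dually each maximal cone of the normal fan of $P$ is a product of cones in the two coordinate subspaces, and the entire normal fan factors as a product of two fans. A polytope whose normal fan splits in this way is itself a Cartesian product, and the factors must coincide with $\pi_1(P)$ and $\pi_2(P)$.

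Next, set $\mathcal{S}_i \coloneqq \{\pi_i(P) : P \in \mathcal{S}\}$. The covering property $\bigcup_{P \in \mathcal S} \pi_i(P) = Q_i$ is immediate, and vertices of $\pi_i(P)$ lie in $\Vertices(Q_i)$ because vertices of $P$ are contained in $\Vertices(Q_1) \times \Vertices(Q_2)$. For the intersection condition, the product structure from the first step gives $P \cap P' = (\pi_1(P) \cap \pi_1(P')) \times (\pi_2(P) \cap \pi_2(P'))$ for any $P, P' \in \mathcal S$; since faces of a product polytope are exactly products of faces and $P \cap P'$ is a face of both $P$ and $P'$, each $\pi_i(P) \cap \pi_i(P')$ is a common face of $\pi_i(P)$ and $\pi_i(P')$. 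Finally, to identify $\mathcal S$ with $\mathcal S_1 \times \mathcal S_2$, the inclusion $\mathcal S \subseteq \mathcal S_1 \times \mathcal S_2$ is immediate from the first step; for the reverse, given $P_i \in \mathcal S_i$, pick a point $(a,b)$ in the relative interior of $P_1 \times P_2$, let $P \in \mathcal S$ be the top-dimensional piece containing it, and use $a \in \interior(\pi_1(P)) \cap \interior(P_1)$ together with the subdivision property of $\mathcal S_1$ to force $\pi_1(P) = P_1$, and likewise $\pi_2(P) = P_2$.

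The main obstacle is the sub-fact used in the first step: giving a clean self-contained proof that a convex polytope whose edges all lie in one of two complementary coordinate subspaces splits as a Cartesian product. Once this is in hand the rest is essentially bookkeeping, though the reverse inclusion in the third step requires some care to make sure that pieces of $\mathcal S$ and products from $\mathcal S_1 \times \mathcal S_2$ line up one-to-one.
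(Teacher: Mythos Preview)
Your approach is correct and genuinely different from the paper's. Both proofs hinge on showing that every top-dimensional $P \in \mathcal S$ is a product $\pi_1(P)\times\pi_2(P)$, but they do so by different means. The paper gives a purely combinatorial argument on the edge graph: fixing a base vertex $(\bfv_1,\bfv_2)\in P$, it sets $V_i=\{\bfw:(\bfw,\bfv_{3-i})\in P\text{ in the }i\text{-th slot}\}$ and shows $\Vertices(P)=V_1\times V_2$ by rerouting any edge path into ``all vertical, then all horizontal'' form, using that a horizontal--vertical corner in $P$ can always be completed to a square (since $P$ cannot have a diagonal edge). Your route is convex-geometric: the edge hypothesis forces every tangent cone of $P$ to split along the two coordinate subspaces, hence every maximal normal cone does, hence $P=\pi_1(P)\times\pi_2(P)$. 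Your argument is more conceptual and generalizes cleanly to products of more than two factors; the paper's is self-contained and avoids any appeal to normal fans or the ``split normal fan $\Rightarrow$ product polytope'' principle.

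Two small points deserve tightening. First, in your verification that $\mathcal S_1$ is a subdivision, the identity $P\cap P'=(\pi_1(P)\cap\pi_1(P'))\times(\pi_2(P)\cap\pi_2(P'))$ only yields that $\pi_1(P)\cap\pi_1(P')$ is a common face when $P\cap P'\neq\emptyset$; if $\pi_2(P)\cap\pi_2(P')=\emptyset$ you need an extra step (e.g., slide along a segment in $Q_2$ to produce a chain $P=R_0,\dotsc,R_m=P'$ in $\mathcal S$ with $R_j\cap R_{j+1}\neq\emptyset$ and all $\pi_1(R_j)$ containing a fixed relative-interior point of $\pi_1(P)\cap\pi_1(P')$). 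Second, the passage from ``each maximal normal cone is a product'' to ``the normal fan is a product of fans'' is not quite immediate; it is cleaner to bypass it and argue directly that $P=\pi_1(P)\times\pi_2(P)$ by a separating-hyperplane argument using the split tangent cones. Neither issue is fatal, but both are exactly the places where a referee would ask for a sentence more.
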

\begin{proof}
The faces of $Q$ have the form $F_1 \times F_2$ with $F_i$ a face of $Q_i$. In particular, all edges of $Q$ have the form $e_1 \times \{\bfw_2\}$ or $\{\bfw_1\} \times e_2$ with $e_i$ an edge of $Q_i$ and $\bfw_i$ a vertex of $Q_i$. We will refer to edges of the form $e_1 \times \{\bfw_2\}$ as ``vertical'' edges, and $\{\bfw_1\} \times e_2$ as ``horizontal'' edges. 
Fix a polytope $P \in \mathcal S$ and a vertex $(\bfv_1,\bfv_2) \in \Vertices(P) \subseteq  \Vertices( Q_1 \times Q_2)$. Let $V_1 \coloneqq \{ \bfw \in \Vertices(Q_1) \colon(\bfw,\bfv_2) \in P\}$ and $V_2 \coloneqq \{ \bfw \in \Vertices(Q_2) \colon(\bfv_1, \bfw) \in P\}$. 
We will show that $\Vertices(P) = V_1 \times V_2$, which implies the desired result. 

To prove the inclusion $\Vertices(P) \subseteq V_1 \times V_2$, consider any vertex $(\bfu_1, \bfu_2) \in \Vertices(P)$. 
By assumption, any edge of $P$ is also an edge of $Q$, so it is either vertical or horizontal. 
We claim that we can find a path from $(\bfv_1,\bfv_2)$ to $(\bfu_1,\bfu_2)$ in the  edge graph of $P$ which is a sequence of vertical edges followed by a sequence of horizontal edges.
The edge graph of $P$ is connected, so there exists a path $\gamma$ from  $(\bfv_1,\bfv_2)$ to $(\bfu_1,\bfu_2)$. 
Suppose that in $\gamma$ there is horizontal edge immediately followed by a vertical edge. Label the three vertices in this part of the path by $(\mathbf a_i, \mathbf b_i)$, 
$(\mathbf a_i, \mathbf b_{i+1})$ and $(\mathbf a_{i+1}, \mathbf b_{i+1})$. 
These three vertices are  in $\Vertices(P)$, and since $P$ cannot have the diagonal edge from 
$(\mathbf a_i, \mathbf b_i)$ to $(\mathbf a_{i+1}, \mathbf b_{i+1})$,
the polytope $P$ must also contain the vertex $(\mathbf a_{i+1}, \mathbf b_{i})$.
We can then alter $\gamma$ to pass by this vertex instead of $(\mathbf a_i, \mathbf b_{i+1})$, which replaces the horizontal edge followed by a vertical edge with a vertical edge followed by a horizontal one. 
Repeatedly applying this procedure we can produce the desired path. 
Now, once $\gamma$ has the desired form, the vertex of $P$ which connects the last  vertical edge of $\gamma$  to its first horizontal edge  must be $(\bfv_1,\bfu_2)$. Therefore,  $(\bfv_1,\bfu_2) \in \Vertices(P)$ and so $\bfu_2 \in V_2$. An analogous argument finds a path  from $(\mathbf u_1, \mathbf u_2)$ to $(\mathbf v_1, \mathbf v_2)$  consisting of horizontal edges followed by vertical edges. From this we can also conclude that  $\mathbf u_1 \in V_1$, as desired.

For the reverse inclusion $\Vertices(P) \supseteq V_1 \times V_2$, suppose $(\bfu_1, \bfu_2) \in V_1 \times V_2$. By the definition of $V_1$ and $V_2$ we have $(\bfu_1, \bfv_2), ( \bfv_1, \bfu_2) \in \Vertices(P)$. We can thus find a path $\gamma_1$ of edges in $P$ starting at $(\bfv_1,\bfv_2)$ and ending at $(\bfu_1,\bfv_2)$. Moreover, by the argument in the previous paragraph, we can assume that $\gamma_1$ consists of only vertical edges. Similarly, there is a path $\gamma_2$ in $P$ from $(\bfv_1,\bfv_2)$ to $(\bfv_1,\bfu_2)$ consisting of only horizontal edges. 
Suppose that $\gamma_1
 = (\bfv_1, \bfv_2) \to (\mathbf a_1, \bfv_2) \to \dotsb \to (\mathbf a_s, \bfv_2) \to (\bfu_1, \bfv_2)$ and $\gamma_2 = (\bfv_1, \bfv_2) \to (\bfv_1, \mathbf b_1) \to \dotsb \to (\bfv_1, \mathbf b_t) \to (\bfv_1, \bfu_2).$ 
Again, as $P$ cannot contain
any edges through the interior of a quadrangle of $Q$, applying an inductive argument to the successive quadrangles  formed by the two paths we arrive at the conclusion that every $(\mathbf a_i, \mathbf b_j)$ is in 
$\Vertices{P}$, and also $(\bfw_1, \bfw_2) \in \Vertices{P}$. This completes the proof of the lemma. 
\end{proof}

\begin{cor}\label{c:subdivision}
If $\mathcal S$ is a matroid polytope subdivision of the matroid polytope $Q(M_1 \oplus \dotsb \oplus M_k)$ then $\mathcal S = \mathcal S_1 \times \dotsb \times \mathcal S_k$ with $\mathcal S_i$ a subdivision of $Q(M_i)$ for all $i$. 
\end{cor}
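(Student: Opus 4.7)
The plan is to reduce the statement to an iterated application of Lemma \ref{l:product}. I start from the standard fact that the matroid polytope of a direct sum factors as a Cartesian product:
\[
Q(M_1 \oplus \dotsb \oplus M_k) = Q(M_1) \times \dotsb \times Q(M_k),
\]
which follows immediately from the description of the bases of $M_1 \oplus \dotsb \oplus M_k$ as disjoint unions of bases of the summands. Writing $E_\ell$ for the ground set of $M_\ell$, a vertex $\mathbf{e}_B$ of $Q(M_1 \oplus \dotsb \oplus M_k)$ is a basis $B$ satisfying $|B\cap E_\ell| = r(M_\ell)$ for each $\ell$.

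The key step is to verify the hypothesis of Lemma \ref{l:product}: every edge of the subdivision $\mathcal{S}$ is also an edge of $Q(M_1) \times \dotsb \times Q(M_k)$. Since $\mathcal{S}$ is a \emph{matroid} polytope subdivision, every face $F \in \faces(\mathcal{S})$ is itself a matroid polytope $Q(N_F)$, and the bases of $N_F$ are a subset of the bases of $M_1 \oplus \dotsb \oplus M_k$. In particular, any edge of $\mathcal{S}$ has the form $\operatorname{conv}(\mathbf{e}_B, \mathbf{e}_{B'})$ for two bases $B, B'$ of $M_1 \oplus \dotsb \oplus M_k$ with $B \triangle B' = \{i,j\}$. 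The constraint $|B \cap E_\ell| = r(M_\ell) = |B' \cap E_\ell|$ for every $\ell$ forces $i$ and $j$ to lie in the same $E_\ell$. Hence the edge has direction $\mathbf{e}_i - \mathbf{e}_j$ with $i,j \in E_\ell$, which means it lies along one factor $Q(M_\ell)$ and is an edge of the product polytope.

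With this in hand, I apply Lemma \ref{l:product} iteratively. Group the factors as $Q(M_1) \times \bigl(Q(M_2) \times \dotsb \times Q(M_k)\bigr)$; Lemma \ref{l:product} yields $\mathcal{S} = \mathcal{S}_1 \times \mathcal{S}'$, where $\mathcal{S}_1$ subdivides $Q(M_1)$ and $\mathcal{S}'$ subdivides $Q(M_2) \times \dotsb \times Q(M_k)$. The edges of $\mathcal{S}'$ arise as projections of edges of $\mathcal{S}$, so they remain edges of $Q(M_2) \times \dotsb \times Q(M_k)$, and the product structure is inherited. Iterating $k-1$ times yields $\mathcal{S} = \mathcal{S}_1 \times \dotsb \times \mathcal{S}_k$. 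One should also remark that each $\mathcal{S}_i$ is automatically a \emph{matroid} subdivision: the pieces $\mathcal{S}_1 \times \dotsb \times \{P_i\} \times \dotsb \times \mathcal{S}_k$ are matroid polytopes by hypothesis, and a product of polytopes is a matroid polytope iff each factor is.

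The only genuine obstacle is the edge-direction analysis, and it is resolved at once by the observation that faces of matroid subdivisions are matroid polytopes on the same ground set. The rest is bookkeeping on top of Lemma \ref{l:product}.
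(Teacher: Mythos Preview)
Your argument is correct and follows essentially the same route as the paper: both proofs use the edge characterization of matroid polytopes to verify that every edge of $\mathcal S$ is already an edge of $Q(M_1\oplus\dotsb\oplus M_k)=Q(M_1)\times\dotsb\times Q(M_k)$, and then invoke Lemma~\ref{l:product}. The paper phrases the edge step slightly more directly (any two bases of $M$ with $|B\setminus B'|=|B'\setminus B|=1$ give an edge of $Q(M)$, so edges of $\mathcal S$ are edges of $Q$), whereas you first locate $i,j$ in a common block $E_\ell$; and the paper leaves the induction for $k>2$ implicit while you spell it out---but these are cosmetic differences.
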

\begin{proof}
Matroid polytopes can be characterized in terms of their edges: A polytope $Q$ with vertices in $\{0,1\}^{n+1}$ is a matroid polytope 
 if and only if all its edges are translations of vectors of the form $\mathbf e_i - \mathbf e_j$ for distinct $i,j \in \{0,1,\dotsc,n\}$ \cite{GGMS}.
Moreover, the edges of a matroid polytope $Q(M)$ are in correspondence with pairs of bases $A, B$ of $M$ satisfying $|A \setminus B| = |B \setminus A|= 1$.
This implies that if $\mathcal S$ is a matroid polytope subdivision of $Q$ then all the edges of $\mathcal S$ were already edges of $Q$.
The  statement of the corollary  now follows from Lemma \ref{l:product} and the fact that $Q(M_1 \oplus \dotsb \oplus M_k) \cong Q(M_1) \times \dotsb \times Q(M_k)$. 
\end{proof}

\begin{proof}[Proof of Theorem \ref{thm:valuation}]
We want to show that for any matroid $M \in\Mat_{n+1}$ and any matroid subdivision $\mathcal S$ of $Q = Q(M)$, 
$$\textstyle \csm_k(M)=\sum_{F\in \interior ( \mathcal S)}(-1)^{\dim(Q)-\dim(F)}\csm_k(M_F).$$
Denote by $\mathcal G$ the free abelian group
generated by the symbols $[F]$ with $F$ a face of $\mathcal S$. Consider the
homomorphism $\beta_k \colon\mathcal G \to \ZZ$ where $\beta_k([F])$ is equal to the
product of the beta invariants of all the connected components of $M_F$ if $M_F$
has exactly $k+1$ connected components, and 0 otherwise. In particular, if $\dim(F)\neq n-k$ then $\beta_k([F])=0$. 

Let $\F$ be a flag of subsets $\F \coloneqq \{\emptyset = S_0 \subsetneq S_1
\subsetneq \dotsb \subsetneq S_{k} \subsetneq S_{k+1} = \{0,\dotsc,n\}\}$, and denote
$\sigma_\F \coloneqq \cone(\bfe_{S_1},\dotsc,\bfe_{S_{k}}) + \RR \!\cdot\! \bfe_{\{0,\dotsc, n\}}$.
Fix a vector $\mathbf v$ in the relative interior of $\sigma_\F$. By Equation \ref{eq:face}, 
for any matroid $N \in \Mat_{n+1}$ we have
\[
\face_{\mathbf v}(Q(N)) = Q(N|S_{1}/S_0 \oplus N|S_{2}/S_1 \oplus \dotsb \oplus N|S_{k+1}/S_{k}).
\]
If $\rank(N) = d+1$, it follows that the weight of the cone $\sigma_\F$ in the cycle $\csm_k(N)$ is
\begin{equation*}\label{eq:csmbetak}
w_{\csm_k(N)}(\sigma_\F) = (-1)^{d-k} \beta_{k}([\face_{\mathbf v}(Q(N))]),
\end{equation*}
so it suffices to show that
\begin{equation}\label{eq:claimbetak}
\beta_k([\face_{\mathbf v}(Q)]) = \beta_k \bigg( \sum_{F \in \interior(\mathcal{S})} (-1)^{\dim (Q) -\dim (F)}[\face_{\mathbf v}(F)] \bigg).
\end{equation}

Starting from the right side of the above equation, we can write
$$\beta_k \Bigg( \sum_{F \in \interior (\mathcal S)}
(-1)^{\dim(Q)-\dim(F)} [\face_{\mathbf v}(F)] \Bigg) = \beta_k \Bigg( \sum_{\substack{G \in \faces(\mathcal S) \\ \dim(G) = n-k}}  a_{G} \, [G] \Bigg),$$
where $$a_{G} \coloneqq\sum_{\substack{F\in \interior (\mathcal S) \\ \face_{\mathbf v}(F) = G}}(-1)^{\dim(Q)-\dim(F)}.$$ 
For any face $F \in \interior(\mathcal S)$ we have $\dim(\face_{\mathbf{v}} (F)) \leq n-k$, 
as $\mathbf v$ is contained in the cone $\sigma_{\F}$ defined by a flag of $k$ non-trivial flats. 
Therefore, for $G$ of dimension $n-k$, the condition $\face_{\mathbf v}(F) = G$ is equivalent to $\face_{\mathbf v}(F) \supseteq G$. 
The coefficient $a_G$ is thus equal to
$$a_{G} =\sum_{F\in \interior (\mathcal S)}(-1)^{\dim(Q)-\dim(F)} j_{G, \mathbf{v}}(F),$$
where $j_{G, \mathbf{v}}$ denotes the function in Lemma \ref{l:cancellation}. 
By Lemma \ref{l:cancellation}, the coefficient $a_{G}$ is  equal to
$1$ if $G \subseteq \face_{\mathbf v}(Q)$ and $0$ otherwise. We have now  shown that 
\[
\beta_k \bigg( \sum_{F \in \interior(\mathcal S)} \, (-1)^{\dim(Q)-\dim(F)} \, [\face_{\mathbf v}(F)] \bigg) = \beta_k \bigg( \sum_{G \in \mathcal I} \, [G] \bigg),
\]
where $\mathcal I$ is the subset of $\faces(\mathcal S)$ consisting of the faces of dimension $n-k$ contained in $\face_{\mathbf v}(Q)$.

If $\dim(\face_{\mathbf v}(Q))<n-k$ then $\beta_k(\face_{\mathbf v}(Q)) = 0$ and also $\mathcal I = \emptyset$, 
so Equation \eqref{eq:claimbetak} is trivially true. Now assume that the dimension of $\face_{\mathbf v}(Q)$ is equal to $n-k$.
In this case, the faces in $\mathcal I$ are the top-dimensional polytopes in the   
subdivision $\mathcal S'$ of $\face_{\mathbf v}(Q)$ induced by $\mathcal{S}$. 
Since $\face_{\mathbf v}(Q) = Q(M|S_{1}/S_0 \oplus M|S_{2}/S_1 \oplus \dotsb \oplus M|S_{k+1}/S_{k})$, we can apply Corollary \ref{c:subdivision} to conclude that this  subdivision must have the form $\mathcal S' = \mathcal S'_1 \times \dotsb \times \mathcal S'_{k+1}$, where $\mathcal S'_i = \{ P^i_1,\dotsc,P^i_{m_i}\}$ is a subdivision of $Q(M|S_{i}/S_{i-1})$ for $1 \leq i \leq k+1$. 
Therefore, we have 
$$\beta_k \bigg( \sum_{G \in \mathcal I} \, [G] \bigg) =\sum_{J}  \prod_{i = 1}^{k+1}  \beta(M_{P^i_{J_j}}) = \prod_{i = 1}^{k+1} \sum_{j=1}^{m_i} \beta(M_{P^i_j}).$$ 
By Example \ref{ex:betavaluation} we have $\sum_{j=1}^{m_i} \beta(M_{P^i_j}) = \beta(M|S_{i}/S_{i-1})$ for all $i$, and so 
$$\beta_k \bigg( \sum_{G \in \mathcal I} \, [G] \bigg) = \prod_{i = 1}^{k+1}\beta(M|S_{i}/S_{i-1}) = \beta_k([\face_{\mathbf v}(Q)]),$$
which proves Equation \eqref{eq:claimbetak} and the statement of the theorem. 
\end{proof}

\section{Polynomial invariants from CSM cycles}\label{Sec:polynomial}

In this section we show how CSM cycles of matroids behave under deletions and contractions,  and we use this to express their degrees in terms of the coefficients
of the characteristic polynomial. We also provide a conjectural presentation of Speyer's $g$-polynomial of a matroid in terms of CSM cycles.

\subsection{Deletion and contraction of CSM cycles}

Recall that $\{\bfe_0, \bfe_1, \dotsc, \bfe_n\}$ denotes the standard basis of the lattice $\ZZ^{n+1} \subseteq \RR^{n+1}$. 
Fix $i \in \{0, \dots, n\}$, and let $\pi_i \colon \R^{n+1} \to \R^n$ denote the linear projection that forgets the $i$-th coordinate. 
With this projection in mind, we label the elements of the standard basis of $\Z^n \subseteq \R^n$ by 
$\bfe_k$ for $k \neq  i$. We will also denote by $\pi_i$ the induced map $\pi_i : \RR^{n+1}/\mathbf 1 \to \RR^{n}/\mathbf 1$. 

Let $M$ be a loopless matroid in $\Mat_{n+1}$. 
The flats of the deletion $M \backslash i$ and the contraction $M/i$ of $i$ from $M$ are 
\[
\L(M\setminus i) = \{ F \backslash i  \mid F \in \L(M) \} \quad \text{ and } \quad \L(M/i) = \{ F \backslash i  \mid F \in \L(M) \text{ and  } i \in F\};
\]
see, for example, \cite[Section 7]{White1}.
The map $\pi_i$ sends the cone $\sigma_ \F$ of the Bergman fan $\B(M)$ corresponding to a flag of flats
$\F: =\{ \emptyset \subsetneq F_1 \subsetneq \dots \subsetneq F_{k} \subsetneq \{0,\dotsc,n\}\}$ in $M$ to
the cone $\sigma_{\F'}$ where $\F'$ is the flag
$\F' \coloneqq\{ \emptyset \subseteq F_1\setminus i \subseteq \dots \subseteq F_{k}\setminus i \subseteq \{0,\dotsc,n\} \setminus i\}$.
It follows that the image of $\B(M)$ under $\pi_i$ is the Bergman fan $\B(M \backslash i)$. 
Let $\delta$ denote the restriction of $\pi_i$ to $\B(M)$. The surjective map $\delta \colon \B(M) \to \B(M\backslash i)$ is called the {\bf deletion map} with respect to the element $i$. 

The next proposition states that when $i$ is not a coloop of $M$ this deletion map is an {\bf open tropical modification} along a {\bf tropical rational function} $f \colon \R^{n} / \mathbf{1} \to \R$. We refer the reader to \cite{ShawInt} and \cite{BriefIntro} for an introduction to tropical modifications and tropical rational functions. 

\begin{prop}\cite[Proposition 2.25]{ShawInt}\label{prop:deletionMod}
Let $M \in \Mat_{n+1}$ be a loopless matroid and assume $i \in \{0, \dots, n\}$ is not a coloop of $M$. 
Then the deletion map 
$\delta \colon  \B(M) \to \B(M\backslash i)$ is an open tropical modification along a tropical rational  function $f: \R^{n} / \mathbf{1} \to \R$ 
such that 
$\divis_{\B(M\backslash i)}(f) = \B(M/i)$.  
\end{prop}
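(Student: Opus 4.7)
My plan is to construct the tropical rational function $f$ explicitly on $\B(M\setminus i)$ and then verify the two properties separately: that its graph reconstructs $\B(M)$ (which is what makes $\delta$ an open tropical modification) and that its divisor is $\B(M/i)$.

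First I would define $f$ on rays and extend conewise. Each ray of $\B(M\setminus i)$ is generated by $\bfe_G$ for a proper nonempty flat $G$ of $M\setminus i$. Such a $G$ either is already a flat of $M$ (necessarily not containing $i$), or else $G\cup\{i\}$ is a flat of $M$ while $G$ itself is not a flat of $M$; since $i$ is not a coloop, both cases actually occur. I would set $f(\bfe_G)=0$ in the first case and $f(\bfe_G)=1$ in the second, and extend $f$ linearly on each cone of $\B(M\setminus i)$. A preliminary check confirms this is consistent across cones and defines an honest tropical rational function, i.e.\ a continuous piecewise integer-linear function, because the restriction of $f$ to any cone $\sigma_{\F'}$ is precisely the $i$-th coordinate of the canonical lift of $\sigma_{\F'}$ to a cone of $\B(M)$ obtained by replacing each $G_\ell\in\F'$ by its closure in $M$.

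Second I would verify that the graph $\Gamma_f\subseteq (\R^{n}/\mathbf 1)\times\R\cong\R^{n+1}/\mathbf 1$ of $f$ above $|\B(M\setminus i)|$ equals $|\B(M)|$, with matching unit weights on top-dimensional cones. On each top-dimensional cone $\sigma_{\F'}$, the graph is a single top-dimensional cone of $\B(M)$ by the description of $f$ in the preceding paragraph; conversely, every top-dimensional cone of $\B(M)$ arises this way because $i$ is not a coloop (so no maximal chain of flats of $M$ collapses entirely under $\pi_i$). This identification of the graph with $\B(M)$ is precisely the statement that $\delta$ is the open tropical modification along $f$.

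Third, and this is the main step, I would compute $\divis_{\B(M\setminus i)}(f)$ and identify it with $\B(M/i)$. The divisor is supported on the codimension-$1$ cones $\tau$ of $\B(M\setminus i)$ where $f$ fails to extend linearly, with weight equal to the balancing defect of the slopes of $f$ on the top-dimensional cones adjacent to $\tau$. Using the description of flats of $M/i$ as $\{F\setminus i : F\in\L(M),\ i\in F\}$ and the local structure of $\L(M)$ near $\tau$, I would show that a codimension-$1$ cone $\tau=\sigma_{\F'}$ appears with nonzero weight in the divisor if and only if the corresponding lift to $M$ contains a flat containing $i$, which is exactly the condition for $\tau$ to sit inside a top-dimensional cone of $\B(M/i)$; the balancing defect then evaluates to the expected weight $1$. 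The core technical obstacle is this last balancing computation: one must control, for each codimension-$1$ cone $\tau$ of $\B(M\setminus i)$, the combinatorics of all top-dimensional flags $\F'$ of $M\setminus i$ extending $\tau$, split them into the two types above according to whether the inserted flat of $M$ contains $i$ or not, and then sum the resulting slope contributions modulo the lineality of $\tau$. The computation reduces to a local statement about pairs of flats $F\subsetneq F'$ of $M$ with $i\in F'\setminus F$, which can be handled by the standard flat-covering argument used to prove balancing of Bergman fans in the first place.
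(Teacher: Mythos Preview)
The paper does not prove this proposition: it is quoted from \cite[Proposition 2.25]{ShawInt}, and the paragraph following the statement is only an informal description of what the result says, not an argument. So there is no ``paper's own proof'' to compare against; your proposal must stand on its own.

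Your overall plan---define $f$ explicitly on rays of $\B(M\setminus i)$ by lifting each flat $G$ to its $M$-closure, then check the modification and divisor conditions---is the natural one, and your first and third steps are on the right track. But your second step contains a genuine error. You claim that the graph $\Gamma_f$ over $|\B(M\setminus i)|$ equals $|\B(M)|$, justifying this by ``$i$ is not a coloop, so no maximal chain of flats of $M$ collapses entirely under $\pi_i$.'' This is false. A maximal chain $\emptyset\subsetneq F_1\subsetneq\dots\subsetneq F_d\subsetneq E$ in $M$ collapses at the (unique) step $m$ where $i$ first enters precisely when $F_m\cup\{i\}$ is already a flat of $M$, and this certainly happens (e.g.\ $M=U_{2,3}$, $i=2$, chain $\emptyset\subsetneq\{2\}\subsetneq\{0,1,2\}$). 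The corresponding top-dimensional cone $\sigma_\F$ of $\B(M)$ then contains the direction $\bfe_i$ and projects onto a codimension-$1$ cone of $\B(M\setminus i)$; it cannot lie in the graph of any function. These ``vertical'' cones are exactly the extra cones the paper describes: $\B(M)$ is the graph of $f$ \emph{together with} cones in the $\bfe_i$ direction attached over $\B(M/i)$. That is what an open tropical modification is---the graph alone is not balanced, and the attached cones over the divisor restore balancing.

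So the identification you need in step two is not $\Gamma_f=|\B(M)|$ but rather: $|\B(M)|$ equals $\Gamma_f$ union the cones $\sigma_{\F'}+\R_{\geq 0}\cdot\bfe_i$ for $\F'$ ranging over maximal chains of flats of $M/i$, lifted into $\Gamma_f$. Once you correct this, your step three computes the weight of $\divis_{\B(M\setminus i)}(f)$ on each such $\sigma_{\F'}$, and the balancing defect you describe should indeed come out to $1$, matching the weight in $\B(M/i)$.
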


Proposition \ref{prop:deletionMod} is expressing the following fact.
If $i$ is not a coloop of $M$ then $M$ and $M \backslash i$ are matroids of the same rank, and thus their Bergman fans are of the same dimension. 
The map $\delta$ is one to one except above a codimension-$1$ subset of $\B(M \backslash i)$, which is exactly the Bergman fan $\B(M / i)$. 
The pre-image of $\delta$ over any point in $\B(M / i)$ is a half-line in direction ${\bf e}_i$.
The Bergman fan $\B(M)$ can be obtained from the graph of $f$ restricted to $\B(M \backslash i)$ by adding 
cones in the direction ${\bf e}_i$ over the
image of $\B(M/i)$. 

\begin{exa}
Consider the uniform matroid $M=U_{3,4}$ on the set $\{0, 1, 2, 3\}$. 
Then $M\backslash 3$ is the uniform matroid $U_{3,3}$ and $M / 3$ is the uniform matroid $U_{2,3}$. 
As we have seen in Example \ref{ex:coarse}, the Bergman fan $\B(M)$ is the union of the cones in $\R^4 / \mathbf{1}$ of the form $\cone\{\mathbf e_i, \mathbf e_j\}$ for all distinct  $i,j\in\{0,1,2,3\}$. The Bergman fan $\B(M\setminus 3)$ is all of $\R^3/ \mathbf{1}$, and $\B(M/3)$ is the union of the three rays in  $\R^3/ \mathbf{1}$ in the directions $\mathbf e_i$ for $i\in\{0,1,2\}$. 
Let $\pi_3\colon \R^{4} \to \R^{3}$ be the linear projection with kernel generated by ${\bf e}_3$.
This map induces the deletion map $\delta \colon \B(U_{3,4}) \to \B(U_{3,3})$, depicted in Figure \ref{fig:mod}. 
The tropical rational function $f: \R^{3}/ \mathbf{1} \to \R$  from Proposition \ref{prop:deletionMod} is in this case $f(\bfx_0, \bfx_1,\bfx_2)=\min\{\bfx_0, \bfx_1,\bfx_2\}$. 
\begin{figure}[ht]
\begin{center}
\includegraphics[scale=1.8]{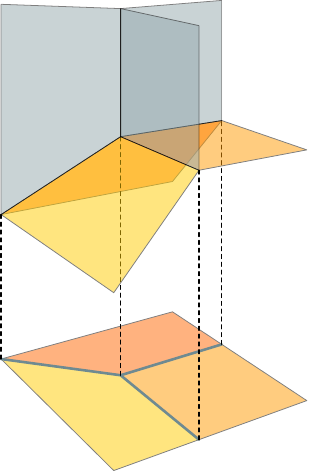}
\caption{The deletion map $\delta \colon \B(U_{3,4}) \to \B(U_{3,3}).$}
\label{fig:mod}
\end{center}
\end{figure}
\end{exa}

A deletion map between Bergman fans induces pushforward and pullback maps on tropical cycles.

\begin{defi}\cite[Definition 2.16]{ShawInt} \label{pushModCon}
Let $\delta \colon \B(M) \to  \B(M \backslash i)$ be the deletion map with respect to a non-coloop element $i$  of the loopless matroid $M$. 
For any $k$, the {\bf pushforward} and {\bf pullback maps} on tropical cycles are maps 
$$\delta_{\ast} \colon \mathcal{Z}_k(\B(M)) \to \mathcal{Z}_k(\B(M\backslash i)) \qquad \text{and} \qquad 
\delta^{\ast}  \colon \mathcal{Z}_k(\B(M\backslash i)) \to \mathcal{Z}_k(\B(M)).$$
The pushforward of a tropical cycle $Z \in  \mathcal{Z}_k(\B(M))$ is supported on the polyhedral complex $\delta(Z)$, and has weights described in \cite[Definition 2.16(1)]{ShawInt}. 
The pullback of a cycle $Z \in \mathcal{Z}_k(\B(M \backslash i))$ is 
the modification of $Z$ along the tropical polynomial function $f \colon \R^{n} / \mathbf{1} \to\R$
associated to $\delta$ by Proposition \ref{prop:deletionMod}.
\end{defi}

Both the pushforward and pullback maps induced by a deletion map $\delta \colon \B(M) \to \B(M \backslash i)$ are group homomorphisms. Moreover, the composition $\delta_{\ast}\delta^{\ast}$ is the identity in $\mathcal{Z}_k(\B(M\backslash i))$ \cite[Proposition 2.23]{ShawInt}.

We now use the pushforward and pullback homomorphisms to relate the CSM cycles of a matroid with the CSM cycles of its deletion and contraction with respect to a non-coloop element $i$.

\begin{prop}\label{prop:pullback}
Let   $\delta \colon \B(M) \to  \B(M \backslash i)$ be the deletion map with respect to a non-coloop element $i$ of the loopless matroid $M$. Then 
\begin{equation}\label{eq:pullback}
\csm_k(M) = \delta^*\csm_k(M\backslash i) -  \delta^*\csm_{k}(M/i)
\end{equation} 
and 
\begin{equation}\label{eq:pushforward}
\delta_{*}\csm_k(M) = \csm_k(M\backslash i) - \csm_{k}(M/i).
\end{equation}
\end{prop}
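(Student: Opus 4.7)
First, observe that the two equations are essentially equivalent: since $\delta_\ast\delta^\ast = \id$ on $\mathcal{Z}_k(\B(M\setminus i))$ by \cite[Proposition 2.23]{ShawInt}, applying $\delta_\ast$ to \eqref{eq:pullback} immediately yields \eqref{eq:pushforward}. I would therefore focus on proving \eqref{eq:pullback}, which I would do by comparing weights cone-by-cone on each top-dimensional cone of the $k$-skeleton of $\B(M)$.

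The crucial combinatorial ingredient is the \emph{beta deletion-contraction formula}: for any loopless matroid $N$ and any element $e \in N$ that is neither a loop nor a coloop, $\beta(N) = \beta(N\setminus e) + \beta(N/e)$. This follows from the classical identity $\chi_N(\lambda) = \chi_{N\setminus e}(\lambda) - \chi_{N/e}(\lambda)$, combined with $\beta(N) = (-1)^{r(N)-1}\chi_N'(1)$ and the rank relations $r(N\setminus e) = r(N)$, $r(N/e) = r(N)-1$ (note that evaluating the derivative sidesteps the vanishing of $\chi_N(1)$).

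Now fix a chain of flats $\F = \{\emptyset = F_0 \subsetneq F_1 \subsetneq \dotsb \subsetneq F_k \subsetneq F_{k+1} = \{0,\dotsc,n\}\}$ of $M$ and let $j_0 \in \{0,1,\dotsc,k\}$ be the unique index with $i \in F_{j_0+1}\setminus F_{j_0}$. The LHS weight at $\sigma_\F$ is $(-1)^{d-k}\prod_{j=0}^k \beta(M|F_{j+1}/F_j)$; the only factor involving $i$ is $\beta(M|F_{j_0+1}/F_{j_0})$, and applying the beta identity to this factor splits the weight into two summands. The first, coming from $\beta((M\setminus i)|(F_{j_0+1}\setminus i)/F_{j_0})$, combines with the remaining unchanged factors into the formula for a weight in $\csm_k(M\setminus i)$ associated to the chain of $M\setminus i$ obtained by deleting $i$ from each flat of $\F$. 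The second summand, from $\beta((M/i)|(F_{j_0+1}\setminus i)/F_{j_0})$ (under the identification $\L(M/i) \cong \{F \in \L(M) : i \in F\}$), combines into a weight in $\csm_k(M/i)$, with an overall sign flip because $M/i$ has rank $d$ instead of $d+1$, so $(-1)^{(d-1)-k} = -(-1)^{d-k}$. I would then match these two summands, respectively, with the weights of $\sigma_\F$ in $\delta^\ast\csm_k(M\setminus i)$ and $-\delta^\ast\csm_k(M/i)$, producing \eqref{eq:pullback}.

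The main obstacle is unpacking the exact action of $\delta^\ast$ on cone weights, especially for ``mixed'' chains where $0 < j_0 < k$. By Proposition~\ref{prop:deletionMod}, $\delta$ is a tropical modification along a function $f$ with divisor $\B(M/i)$, and cones of $\B(M)$ project to cones of $\B(M\setminus i)$ that are either generic (disjoint from $\B(M/i)$) or lie inside the divisor; the pullback $\delta^\ast$ (Definition~\ref{pushModCon}) distributes weights differently in these two situations. The most delicate boundary case occurs when $F_{j_0+1} = F_{j_0} \cup \{i\}$, where the chain $\F\setminus i$ in $M\setminus i$ has a collapsed pair and the corresponding cone sits below the required dimension, so its weight in $\csm_k(M\setminus i)$ vanishes; matching this to the LHS requires the exceptional contribution from the modification, and the beta deletion-contraction decomposition provides exactly the right bookkeeping to account for it. Once the weights of $\delta^\ast$ are correctly written on each cone $\sigma_\F$ according to the position of $j_0$, the equality reduces to the identity from the key lemma, case by case.
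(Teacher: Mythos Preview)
Your reduction of \eqref{eq:pushforward} to \eqref{eq:pullback} via $\delta_\ast\delta^\ast=\id$ is exactly what the paper does, and your treatment of the ``generic'' cones $\sigma_\F$ (those lying in the graph $\Gamma_f$ of the modification, i.e.\ those with $\F'=\F\setminus i$ still a length-$(k{+}1)$ chain of flats of $M\setminus i$) is also essentially the paper's argument: one checks that for $l\neq j_0$ the minors are unchanged, and at $l=j_0$ the element $i$ is neither a loop nor a coloop of $M|F_{j_0+1}/F_{j_0}$, so the beta deletion--contraction identity splits the weight as required. The paper records this loop/coloop analysis as a separate matroid lemma (Lemma~\ref{lem:loopscoloops}); you have the right idea but should state and use it explicitly.

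The genuine gap is the ``delicate boundary case'' $F_{j_0+1}=F_{j_0}\cup\{i\}$, i.e.\ the cones $\sigma_\F$ that contain the direction $\mathbf e_i$. Here your proposed tool breaks down: $i$ is then a coloop of the rank-$1$ minor $M|F_{j_0+1}/F_{j_0}\cong U_{1,1}$, so the identity $\beta(N)=\beta(N\setminus i)+\beta(N/i)$ does not apply. More seriously, for these cones the pullback weights of $\delta^\ast\csm_k(M\setminus i)$ and $\delta^\ast\csm_k(M/i)$ are \emph{not} read off from any single cone of $\csm_k(M\setminus i)$ or $\csm_k(M/i)$ (the projected chain collapses, as you note); they are the extra ``under-graph'' weights that a tropical modification inserts to restore balancing, and there is no simple closed formula for them that the beta identity alone would produce. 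Your sentence ``the beta deletion--contraction decomposition provides exactly the right bookkeeping'' is precisely where the argument is missing.

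The paper's way around this is the step you should add: rather than computing the $\mathbf e_i$-direction weights directly, it argues that they are \emph{forced} by balancing on both sides. On the right-hand side this is immediate from the definition of $\delta^\ast$ as a modification. On the left-hand side one invokes Proposition~\ref{prop:nonzerocones}: every $k$-cone of the coarse subdivision of $|\B(M)|$ is either contained in $\Gamma_f$ or contains the $\mathbf e_i$ direction, so the weights of $\csm_k(M)$ on the $\mathbf e_i$-direction cones are determined by the weights on $\Gamma_f$ together with the balancing condition. Since both cycles are balanced and agree on $\Gamma_f$ (your graph-cone computation), they must agree everywhere. This uniqueness-by-balancing step is the missing idea in your plan.
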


For the proof of Proposition \ref{prop:pullback} we need the following matroidal result, which we record as a separate lemma.
\begin{lemma}\label{lem:loopscoloops}
Let $S \subseteq T$ be subsets of the ground set of a matroid $M$, and suppose $i \in T \setminus S$.
\begin{enumerate}[label=\it{\alph*}\normalfont{)}]
\item \label{lem:coloop} If $T \setminus i$ is a flat of $M$ then $i$ is a coloop of $M|T/S$.
\item \label{lem:loop} If $S \cup i$ is a flat of $M$ but $S$ is not a flat of $M$ then $i$ is a loop of $M|T/S$.
\item \label{lem:notloopcoloop} If $S, T$ are flats of $M$ but $T \setminus i$ is not a flat of $M$ then $i$ is neither a loop nor a coloop of $M|T/S$.
\end{enumerate}
\end{lemma}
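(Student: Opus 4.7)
The plan is to reduce each of the three assertions to a short check with the closure operator of $M$, via the standard formula for the rank of the minor $N \coloneqq M|T/S$: for every $A \subseteq T \setminus S$ one has $r_N(A) = r_M(A \cup S) - r_M(S)$. From this I extract the two characterizations I will use throughout: an element $i \in T \setminus S$ is a loop of $N$ iff $r_M(S \cup i) = r_M(S)$, equivalently $i \in \overline{S}$; and $i$ is a coloop of $N$ iff $r_M(T) > r_M(T \setminus i)$, equivalently $i \notin \overline{T \setminus i}$. Here $\overline{\,\cdot\,}$ denotes closure in $M$.

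For part (a), the hypothesis that $T \setminus i$ is a flat of $M$ says $\overline{T \setminus i} = T \setminus i$, which visibly does not contain $i$; hence $i$ is a coloop of $N$.

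For part (b), since $S \cup i$ is a flat we have $\overline{S} \subseteq \overline{S \cup i} = S \cup i$, and since $S$ is not a flat we have $\overline{S} \supsetneq S$. These two constraints force $\overline{S} = S \cup i$, so $i \in \overline{S}$ and $i$ is a loop of $N$. For part (c), $S$ being a flat gives $\overline{S} = S \not\ni i$, so $i$ is not a loop; and $T$ being a flat gives $\overline{T \setminus i} \subseteq \overline{T} = T$, while $T \setminus i$ not being a flat makes this inclusion strict, so $\overline{T \setminus i} = T \ni i$ and $i$ is not a coloop.

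The calculations are mechanical, so the only point requiring real care is the initial reformulation of \emph{$i$ is a loop / coloop of $M|T/S$} in terms of the closure operator of $M$; once this bridge is in place, each of the three parts becomes essentially a one-line inclusion check, and I do not expect any substantive obstacle.
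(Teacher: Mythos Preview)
Your proof is correct and follows essentially the same route as the paper: both arguments reduce to the characterizations ``$i$ is a loop of $M|T/S$ iff $i \in \overline{S}$'' and ``$i$ is a coloop of $M|T/S$ iff $i \notin \overline{T\setminus i}$'', after which each of the three parts is the same one-line closure check you give. The only cosmetic difference is that the paper derives these characterizations from the circuit description of the minor, whereas you derive them from the rank formula; the content is identical.
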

\begin{proof}
Recall that the circuits of the minor $M|T/S$ are the minimal nonempty subsets of the form $C \setminus S$,
where $C$ is a circuit of $M$ contained in $T$ \cite[Section 7]{White1}. This description implies that 
$i$ is a coloop of $M|T/S$ if and only if in $M$ the element $i$ is not in the closure of $T\setminus i$. Similarly, $i$ is a loop of
$M|T/S$ if and only if in $M$ the element $i$ is in the closure of $S$. The three assertions in the lemma follow directly from these facts. 
\end{proof}

\begin{proof}[Proof of Proposition \ref{prop:pullback}]
The second equation follows directly from the first one by applying $\delta_*$. 
To prove \eqref{eq:pullback}, suppose $\sigma_ \F$ is a $k$-dimensional cone of $\B(M)$ corresponding to the flag of flats
$\F: =\{ \emptyset = F_0 \subsetneq F_1 \subsetneq \dots \subsetneq F_{k} \subsetneq F_{k+1} = \{0,\dotsc,n\}\}$ in $M$.
The cone $\delta(\sigma_ \F)$ is the cone $\sigma_{\F'}$ where 
$\F' \coloneqq\{ F'_0 \subseteq F'_1 \subseteq \dots \subseteq F'_{k} \subseteq F'_{k+1}\}$
is the chain of flats of $M\setminus i$ defined by $F'_l \coloneqq F_l \setminus i$ for all $l$.

Assume first that $\sigma_{\F}$ is contained in the graph of the function $f \colon \R^{n} / \mathbf{1} \to \R$ restricted to $\B(M \backslash i)$, where $f$ is the tropical rational function of the modification $\delta$.
In this case $\sigma_{\F'}$ has the same dimension as $\sigma_ \F$, and so the chain $\F'$ has also length $k+1$.
By the pullback formula for tropical cycles, the weight of the cone $\sigma_ \F$ in $\delta^*(\csm_k(M\backslash i) - \csm_k(M/i))$ is equal to the weight of 
the cone $\sigma_{\F'}$ in the cycle $\csm_k(M\backslash i) - \csm_k(M/i)$.
To show that $\sigma_\F$ has the same weight in both cycles, we thus need to show that 
\begin{equation}\label{eq:equalweights}
\prod_{l=0}^k \beta(M|F_{l+1}/F_l) =   \prod_{l=0}^k \beta(( M \backslash i) |F'_{l+1}/F'_l)  + \prod_{l= 0}^k \beta(( M/ i) |F'_{l+1}/F'_l).
\end{equation}

Let $m$ be such that $i \notin F_m$ and $i \in F_{m+1}$. 
For any $l < m$, by Lemma \ref{lem:loopscoloops} \ref{lem:coloop} the element 
$i$ is a coloop in $M|(F_{l+1}\cup i)/F_l$, and thus its deletion is the same as its contraction, i.e., 
$M|F_{l+1}/F_l = ( M \backslash i) |F'_{l+1}/F'_l = (M/ i) |F'_{l+1}/F'_l$.
Moreover, since $\sigma_\F$ is in the graph of the function $f$, 
for any $l \geq m$ we have that $F_{l+1} \setminus i$ is not a flat of $M$,
otherwise the cone of $\B(M)$ corresponding to the chain of flats 
$\{ F'_0 \subseteq F'_1 \subseteq \dots \subseteq F'_{l+1} \subseteq F_{l+2} \subseteq \dotsb \subseteq F_{k+1}\}$ would be below the graph of $f$, contradicting Proposition \ref{prop:deletionMod}. 
Therefore, by Lemma \ref{lem:loopscoloops} \ref{lem:loop}, for any $l > m$ we have that 
$i$ is a loop in $M|F_{l+1}/(F_l \setminus i)$, and thus again $M|F_{l+1}/F_l = (M / i) |F'_{l+1}/F'_l = (M \setminus i) |F'_{l+1}/F'_l$.
When $l=m$, Lemma \ref{lem:loopscoloops} \ref{lem:notloopcoloop} shows that $i$ is neither a loop nor a coloop of $M|F_{m+1}/F_m$, and so we have
$$\beta(M|F_{m+1}/F_m) = \beta(( M \backslash i) |F'_{m+1}/F'_l) + \beta(( M/ i) |F'_{m+1}/F'_m).$$
Multiplying all these equations proves Equation \eqref{eq:equalweights}. This shows that the cycles $\csm_k(M)$ and $\delta^*(\csm_k(M \backslash i) - \csm_k(M/i))$ agree in the graph $\Gamma_f$ of the function $f$.

By the pullback formula for tropical cycles, any cone of the cycle $\delta^*(\csm_k(M \backslash i) - \csm_k(M/i))$ is either contained in $\Gamma_f$ or it contains the direction $\mathbf{e}_i$. Moreover, the weights of the cones contained in $\Gamma_f$, together with the balancing condition, determine the pullback cycle completely.
Similarly, each $k$-dimensional cone of the coarse subdivision of $|\B(M)|$ is either in $\Gamma_f$ or it contains the $\mathbf{e}_i$ direction. Since the support of the cycle $\csm_k(M)$ is the $k$-skeleton of this coarse subdivision (Proposition \ref{prop:nonzerocones}), the weights in $\csm_k(M)$ of the cones in the $\mathbf{e}_i$ direction are also determined by the weights of the cones in $\Gamma_f$ together with the balancing condition. This shows that the cycles $\csm_k(M)$ and $\delta^*(\csm_k(M \backslash i) - \csm_k(M/i))$ must be the same.
\end{proof}

\subsection{Degrees of CSM cycles and the characteristic polynomial}

We now relate the degrees of the CSM cycles of a matroid to the coefficients of its characteristic polynomial.
If $Z$ and $Z'$ are two tropical cycles 
in $\R^{n+1} / \mathbf{1}$, we denote by $Z \cdot Z'$ their stable intersection, and by $Z^k$ the stable intersection of $k$ copies of $Z$; see \cite[Section 3.6]{MaclaganSturmfels}.

\begin{defi}\label{def:degree}
The degree of a $0$-dimensional tropical cycle $Z$ in $\R^{n+1} / \mathbf{1}$ is $\deg(Z) \coloneqq \sum_{z \in Z} w_Z(z)$. 
The degree of a $k$-dimensional tropical cycle $Z$ in $\R^{n+1} / \mathbf{1}$ is 
$$\deg(Z) \coloneqq \deg( Z \cdot \B(U_{n, n+1})^k).$$
\end{defi}

\begin{exa}\label{ex:degcsmUniform}
Consider the uniform matroid $U_{d+1, n+1}$. 
By Example  \ref{ex:csmcycleUniform} we have $$\csm_k(U_{d+1, n+1}) = (-1)^{d-k}\binom{n-k-1}{d-k} \B(U_{k+1, n+1})$$ for all $0\leq k\leq d$. 
The degree of $\B(U_{k+1, n+1})$ is 1, and so $\deg(\csm_k(U_{d+1, n+1})) = (-1)^{d-k}\binom{n-k-1}{d-k}$.
\end{exa}

The following result generalizes \cite[Theorem 3.5]{Huh:MaxLikely} and \cite[Theorem 1.2]{Aluffi:Grothendieck} to all matroids, not necessarily representable in characteristic 0. Recall that $\overline{\chi}_M$ denotes the reduced characteristic polynomial of the matroid $M$.

\begin{thm}\label{thm:hvector}
If $M \in \Mat_{n+1}$ is a rank $d+1$ matroid then
$$\sum_{k=0}^d \deg(\csm_{k}(M))\,t^k = \overline{\chi}_M(1+t).$$
\end{thm}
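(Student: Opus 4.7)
The plan is to prove the identity by induction on $n+1$, the size of the ground set of $M$. Write $P_M(t) \coloneqq \sum_{k=0}^{d}\deg(\csm_k(M))\,t^k$ for the left-hand side. I will show that both $P_M(t)$ and $\overline{\chi}_M(1+t)$ satisfy the same deletion-contraction recursion at any non-loop non-coloop element of $M$, and that they agree on the base matroids, namely those where every element is either a loop or a coloop.

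For the base cases, if $M$ has a loop then $\csm_k(M) = \emptyset$ and $\chi_M \equiv 0$ by convention, so both sides vanish. Otherwise $M$ is the free matroid $U_{r,r}$ with $r = n+1$, and $\chi_{U_{r,r}}(\lambda) = (\lambda-1)^r$ gives $\overline{\chi}_{U_{r,r}}(1+t) = t^{r-1}$. On the CSM side, for any length-$k$ flag $\F$ with $k < r-1$ at least one minor $M|F_{i+1}/F_i$ has rank at least $2$ and is a direct sum of coloops, hence disconnected with $\beta = 0$; thus $w(\sigma_\F) = 0$ and $\csm_k(U_{r,r}) = 0$. The top CSM cycle $\csm_{r-1}(U_{r,r}) = \B(U_{r,r})$ has degree $1$, giving $P_{U_{r,r}}(t) = t^{r-1}$ as well.

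For the inductive step, take a non-loop non-coloop element $i$ of $M$. The classical deletion-contraction $\chi_M = \chi_{M\setminus i} - \chi_{M/i}$, divided through by $\lambda - 1$, yields
\[
\overline{\chi}_M(1+t) = \overline{\chi}_{M\setminus i}(1+t) - \overline{\chi}_{M/i}(1+t),
\]
with the convention $\overline{\chi} \equiv 0$ on matroids with loops (so the recursion remains valid when $M/i$ acquires loops). On the CSM side, Proposition \ref{prop:pullback} gives
\[
\delta_*\csm_k(M) = \csm_k(M\setminus i) - \csm_k(M/i) \quad \text{in } \mathcal Z_k(\R^n/\mathbf 1),
\]
where $\delta \colon \R^{n+1}/\mathbf 1 \to \R^n/\mathbf 1$ is the coordinate projection forgetting the $i$-th coordinate. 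Taking degrees and using that pushforward of tropical cycles along a linear projection preserves degree then yields $P_M(t) = P_{M\setminus i}(t) - P_{M/i}(t)$, and the inductive hypothesis applied to $M\setminus i$ and $M/i$ (each on $n$ elements) closes the argument.

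The main obstacle is verifying $\deg(\delta_* Z) = \deg(Z)$ for tropical $k$-cycles $Z$ in $\R^{n+1}/\mathbf 1$. This is a standard projection-formula identity in tropical intersection theory: the degree of $\delta_* Z$ is its stable intersection number with a generic codimension-$k$ tropical linear space $L'$ in the target, and by the projection formula $\deg(\delta_*Z \cdot L') = \deg(Z \cdot \delta^{-1}(L'))$. Since $\delta^{-1}(L')$ is a generic codimension-$k$ tropical cycle of degree $1$ in the source, this final quantity equals $\deg(Z)$, as required.
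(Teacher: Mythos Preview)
Your overall strategy---deletion--contraction induction via Proposition \ref{prop:pullback}, with the same base cases---matches the paper's. The gap is in your final paragraph: the assertion that pushforward along the coordinate projection $\delta$ preserves the degree of an arbitrary tropical $k$-cycle is false. For a concrete counterexample take $n=2$ and let $Z = \RR\cdot\bfe_i$ (weight $1$) in $\RR^3/\mathbf 1$. Then $\deg(Z)=1$, since a generic translate of $\B(U_{2,3})$ meets $Z$ transversally in one point, but $\delta_*Z=0$ because $\delta$ collapses $Z$ to a point. Your projection-formula argument is fine up to $\deg(\delta_*Z\cdot L')=\deg(Z\cdot\delta^{-1}(L'))$; it breaks at the last step. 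The preimage $\delta^{-1}(L')$ is indeed a degree-$1$ codimension-$k$ cycle, but it is \emph{not} generic: it always contains the lineality direction $\bfe_i$. In $\RR^{n+1}/\mathbf 1$ there is no B\'ezout identity $\deg(Z\cdot W)=\deg(Z)\deg(W)$ for arbitrary fan cycles of complementary dimension (the line $Z$ above stably intersected with a parallel copy of itself gives $0$, not $1$), so ``degree-$1$ codimension-$k$'' is not enough to conclude $\deg(Z\cdot\delta^{-1}(L'))=\deg(Z)$.

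The paper avoids this by using the \emph{pullback} form of Proposition \ref{prop:pullback},
\[
\csm_k(M)=\delta^*\bigl(\csm_k(M\setminus i)-\csm_k(M/i)\bigr),
\]
and proving separately (Proposition \ref{prop:degpullback}) that $\deg(\delta^*Z)=\deg(Z)$ for cycles $Z\in\mathcal Z_k(\B(M\setminus i))$. That proposition is not a formality: its proof shows $\B(U_{n,n+1})^k\cdot\delta^*Z=\B(U_{n-1,n}\oplus C_i)^k\cdot\delta^*Z$ by exploiting that $\delta^*Z$, being a modification, lies entirely on one side of the graph of $f$, so the difference cycle $X=\B(U_{n,n+1})-\B(U_{n-1,n}\oplus C_i)$ can be translated off $\delta^*Z$ in the $-\bfe_i$ direction. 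Note that the equality $\deg(\delta_*\csm_k(M))=\deg(\csm_k(M))$ you want is in fact true---precisely because $\csm_k(M)$ is a $\delta^*$-pullback---but your argument does not establish it. Switching to the pullback formulation and invoking (or reproving) Proposition \ref{prop:degpullback} repairs the proof.
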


\begin{exa}\label{ex:degcsm0d}
The $0$-dimensional CSM cycle of a rank $d+1$ matroid $M$ has degree equal to $(-1)^{d}\beta(M)$, which is equal to the constant coefficient $\overline{\chi}_M(1)$ of the polynomial $\overline{\chi}_M(1+t)$.
The $d$-dimensional CSM cycle of $M$ is equal to the tropical cycle $\B(M)$, which has degree $1$ if $M$ is loopless and $0$ otherwise. 
This is the leading coefficient of $\overline{\chi}_M(1+t)$. 
\end{exa}

We require the next proposition to prove  Theorem \ref{thm:hvector}.  

\begin{prop}\label{prop:degpullback}
Let $\delta : \B(M) \to \B(M\backslash i)$ be the deletion map with respect to a non-coloop element $i$ of $M$. 
For any $k$-dimensional tropical cycle $Z \in \mathcal{Z}_k(\B(M\backslash i))$,
we have $$\deg(Z) = \deg(\delta^*Z).$$ 
\end{prop}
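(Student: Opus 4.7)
The plan is to deduce Proposition \ref{prop:degpullback} from the projection formula combined with the invariance of the degree of a zero-dimensional cycle under pushforward. First I would rewrite both sides using Definition \ref{def:degree}: $\deg(\delta^*Z) = \deg(\delta^*Z \cdot \B(U_{n,n+1})^k)$ and $\deg(Z) = \deg(Z \cdot \B(U_{n-1,n})^k)$. Since the degree of any zero-cycle is preserved under arbitrary pushforwards, it suffices to establish the cycle-level identity
\[
\delta_*\bigl(\delta^*Z \cdot \B(U_{n,n+1})^k\bigr) = Z \cdot \B(U_{n-1,n})^k
\]
on $\B(M\setminus i)$, after choosing generic translates of the hyperplane cycles to make all stable intersections transverse.

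A key observation is that Proposition \ref{prop:deletionMod} applies not only to $M$ but also to the uniform matroid $U_{n,n+1}$: the element $i$ is not a coloop of $U_{n,n+1}$, so the restriction of $\pi_i$ to $\B(U_{n,n+1})$ is itself a tropical modification of $\B(U_{n,n}) = \R^n/\mathbf{1}$ whose divisor is exactly $\B(U_{n-1,n})$. Thus $\delta$ and the deletion map for $U_{n,n+1}$ both arise as restrictions of the same linear projection $\pi_i : \R^{n+1}/\mathbf{1} \to \R^n/\mathbf{1}$ and carry compatible modification structures. Geometrically, this is the reason one should expect to be able to peel off factors of $\B(U_{n,n+1})$ one at a time and match them with factors of $\B(U_{n-1,n})$ on the base.

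The actual argument then proceeds by induction on $k$. The base case $k=0$ is exactly the identity $\delta_*\delta^*Z = Z$, which is the content of \cite[Proposition 2.23]{ShawInt}. For the inductive step I would use the projection formula $\delta_*(\delta^*W \cdot B) = W \cdot \delta_*B$ (an immediate consequence of $\delta_*\delta^*=\mathrm{id}$ together with the linearity of stable intersection) combined with an identity of the form $\delta^*(W \cdot h) = \delta^*W \cdot H$, where $W$ is a cycle on $\B(M\setminus i)$ and $h$, $H$ are generic translates of $\B(U_{n-1,n})$ and $\B(U_{n,n+1})$ respectively. This latter identity is the main technical obstacle: it encodes the commutativity of pullback along the modification $\delta$ with transverse stable intersection against a hyperplane pulled from the base. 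One verifies it by noting that, after generic translation, every transverse intersection point of $\delta^*W$ with $H$ lies in the region of $\B(M)$ on which $\delta$ is a bijection onto $\B(M\setminus i) \setminus \B(M/i)$, so the intersection is locally a product in the $\mathbf{e}_i$ direction and weights are transported faithfully by $\delta$.
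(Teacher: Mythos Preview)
Your overall plan---reduce to a cycle-level identity via the projection formula and peel off one hyperplane at a time---is close in spirit to the paper's argument, but the key identity you rely on,
\[
\delta^*(W \cdot h) \;=\; \delta^*W \cdot H,
\]
with $h$ a translate of $\B(U_{n-1,n})$ and $H$ a translate of $\B(U_{n,n+1})$, is not justified and is in fact the entire difficulty. Your argument for it is that after a generic translation every intersection cell of $\delta^*W$ with $H$ lies in the locus where $\delta$ is a bijection onto $\B(M\setminus i)\setminus\B(M/i)$. This is not true: the modification $\delta^*W$ has top-dimensional faces of the form $\tau + \R_{\geq 0}\,\bfe_i$ attached over $W\cap\B(M/i)$, and $\B(U_{n,n+1})$ likewise has facets containing the direction $\bfe_i$. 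These pieces meet in cells of the expected dimension for a generic translation, and those cells sit precisely in the ``attached'' region where $\delta$ is not injective. So the step fails as written.

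The paper resolves exactly this issue by splitting the problem in two. First it replaces $\B(U_{n,n+1})$ by the cylinder $\B(U_{n-1,n}\oplus C_n)=\pi_i^*\,\B(U_{n-1,n})$; for this genuine pullback the projection formula of \cite{AllermannRau} applies cleanly and gives
\[
Z\cdot\B(U_{n-1,n})^k \;=\; \pi_*\bigl(\delta^*Z\cdot\B(U_{n-1,n}\oplus C_n)^k\bigr),
\]
hence $\deg(Z)=\deg\bigl(\delta^*Z\cdot\B(U_{n-1,n}\oplus C_n)^k\bigr)$. Second, it shows that replacing $\B(U_{n-1,n}\oplus C_n)$ by $\B(U_{n,n+1})$ costs nothing: the difference $X=\B(U_{n,n+1})-\B(U_{n-1,n}\oplus C_n)$ is supported in the half-space below the graph of $f$, while $\delta^*Z$ lies on and above the graph, so translating $X$ by $\epsilon\,\bfe_i$ with $\epsilon<0$ makes the set-theoretic intersection empty and hence $\delta^*Z\cdot X=0$. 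Factoring $\B(U_{n,n+1})^k-\B(U_{n-1,n}\oplus C_n)^k$ through $X$ finishes the argument. Your proposed identity is essentially asserting both of these steps at once without supplying the second; in particular, the cycle $H$ you want to intersect with is not the pullback of $h$, and the discrepancy is exactly $X$.
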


\begin{proof}
To aid with notation we assume that $i = n$. 
The tropical cycle $\B(U_{n-1, n}) \in \mathcal{Z}_{n-2}(\R^{n}/ \mathbf{1})$ is the tropical hypersurface of the  tropical polynomial 
$h(\bfx_0, \dots, \bfx_{n-1}) = \min\{ \bfx_0, \bfx_1,  \dots, \bfx_{n-1}\}$ on $\R^{n}/\mathbf{1}$.
Let $C_n$ denote the matroid consisting of a single coloop $n$. Then $\B(U_{n-1, n} \oplus C_n) \in \mathcal{Z}_{n-1}(\R^{n+1} / \mathbf{1})$ is also a tropical hypersurface defined by the polynomial  $\tilde{h}(\bfx_0, \dots, \bfx_n) = h(\bfx_0  ,\dots, \bfx_{n-1})$. 
Let $\pi: \RR^{n+1} / \mathbf{1} \to \RR^{n}/ \mathbf{1}$ be the map induced by the  linear projection  $\RR^{n+1} \to \RR^{n}$ which forgets the $n$-th coordinate. 
 Then $\pi^{\ast} \divis(h) = \divis(\tilde{h})$, which implies that $\pi^{\ast} \B(U_{n-1, n}) = \B(U_{n-1, n} \oplus C_n)$. 

We have that $\pi_{\ast} \delta^{\ast}Z = \delta_{\ast} \delta^{\ast}Z = Z$.
Applying the projection formula in \cite[Proposition 4.8]{AllermannRau}
yields
\begin{align*}
\B(U_{n-1, n})^k \cdot Z &  =  \B(U_{n-1, n})^{k-1} \cdot (\divis(h) \cdot \pi_{\ast} \delta^{\ast}Z)  \\
 & =  \B(U_{n-1, n})^{k-1}  \cdot ( \pi_{\ast} (\pi^{\ast} \divis(h) \cdot \delta^{\ast}Z) ) \\ 
  & =  \B(U_{n-1, n})^{k-1}  \cdot \pi_{\ast} (\B(U_{n-1, n} \oplus C_n) \cdot \delta^{\ast} Z) .
\end{align*}
Repeatedly applying this argument $k$ times we obtain
$\B(U_{n-1, n})^k \cdot Z= \pi_{\ast}( \B(U_{n-1, n} \oplus C_n)^k \cdot \delta^{\ast}Z)$.
The degree of a zero cycle is preserved under the  pushforward map, and so we have
$\deg( Z) = \deg( \B(U_{n-1, n} \oplus C_n)^k \cdot \delta^{\ast}Z)$.

We will now show that $\deg( \delta^{\ast} Z) = \deg ( \B(U_{n-1, n} \oplus C_n)^k \cdot \delta^{\ast}Z )$.
Let $X \coloneqq \B(U_{n, n+1}) - \B(U_{n-1, n} \oplus C_n).$
Since $n$ is not a coloop of  $M$, the support of the tropical cycle $X$ is contained in the closed connected component of $\R^{n+1} / \mathbf{1}$ defined by 
$$\Gamma_f(\B(M\backslash n))^- \coloneqq \{ \mathbf x \in \R^{n+1} / \mathbf{1} \mid  \mathbf x \cdot \mathbf e_n \leq  f (\pi(\mathbf{x})) \}.$$ 
To compute the stable tropical intersection $\delta^{\ast}Z \cdot X$, denote by $X_{\epsilon}$ the  translate of $X$ by $ \epsilon \mathbf e_n$ for  $\epsilon < 0$. Then  $X_{\epsilon} \cap \delta^{\ast} Z = \emptyset$, and so 
$\delta^{\ast}Z \cdot X = 0$. Moreover, we have 
$$\delta^{\ast}Z \cdot  \big[\B(U_{n, n+1})^k - \B(U_{n-1, n} \oplus C_n)^k\big] 
= \delta^{\ast}Z \cdot  X \cdot  \left[  \sum_{j = 0}^{k-1}   
\B(U_{n, n+1})^{k-1-j} \cdot \B(U_{n-1, n} \oplus C_n)^j\right],$$
which is equal to zero by the associativity of the intersection product. 
This shows the equality of degrees $\deg( Z) = \deg(\delta^{\ast}Z)$ and proves the lemma. 
\end{proof}

\begin{proof}[Proof of Theorem \ref{thm:hvector}]
Both the reduced characteristic polynomial and the CSM cycles satisfy a recursion 
via deletions and contractions. More precisely, if $M$ is a loopless matroid and $i$ is not a coloop of $M$, we have
$$\overline{\chi}_M(\lambda) = \overline{\chi}_{M \backslash i}(\lambda) - \overline{\chi}_{M / i}(\lambda) \qquad \text{ and } \qquad 
\csm_k(M) = \delta^*(\csm_k(M \backslash i) -  \csm_{k}(M/i)),$$
where the equality on the right-hand side follows from Proposition \ref{prop:pullback}. 
Since degree is preserved under pullbacks by Proposition \ref{prop:degpullback}, in this case we have
\begin{equation}\label{eq:degreerecursion}
\deg(\csm_k(M)) = \deg(\csm_k(M \backslash i)) -  \deg(\csm_{k}(M/i)).
\end{equation}
If $M$ has any loops then
$$\overline{\chi}_M(\lambda) = 0 \qquad \text{ and } \qquad \csm_k(M) = \emptyset.$$
It thus suffices to check the statement for matroids $M$ with no loops and where all the elements are coloops, i.e., $M =  U_{d+1, d+1}$.
In this case, the tropical cycle $\B(M)$ is the same as $\R^{n+1} / \mathbf{1}$ equipped with weight $1$ everywhere. The only non-trivial CSM cycle is $\csm_d(\B(M)) = \B(M)$, which is of degree $1$.  Therefore  
$$\sum_{k=0}^n (-1)^k \text{deg}(\csm_{k}(M))t^k = t^d,$$ 
whereas by Example \ref{ex:betauniform},
$$\overline{\chi}_M(t+1) =\sum_{k = 0}^d (-1)^{d-k}\binom{d}{k}(t+1)^k= t^d,$$
confirming the desired result.
\end{proof}

\subsection{Conjecture: The $g$-polynomial as intersection numbers}

In this section we give a conjectured presentation of Speyer's $g$-polynomial of a matroid  using CSM cycles.

For a general rank $d$ matroid on $n$ elements, the $g$-polynomial of $M$ is defined by way of the $K$-theory of the Grassmannian $\text{Gr}(d,n)$ \cite{FinkSpeyer}. This polynomial is a valuative matroid invariant in the sense of Section \ref{Sec:val} \cite[Section 4]{FinkSpeyer}. 
Conjecture \ref{conj:gpoly} describes the coefficients of the $g$-polynomial as intersection numbers in the Bergman fan of $M$ between CSM cycles and certain tropical cycles defined recursively from them. 
This formula would offer a Chow theoretic description of this matroid invariant from $K$-theory.

There is an intersection product for tropical cycles contained in Bergman fans of matroids 
\cite{ShawInt, FrancoisRau}. 
If $M \in \Mat_{n+1}$ is a loopless rank $d+1$ matroid and $\mathcal Z_k(\B(M))$ denotes the group of $k$-dimensional tropical cycles whose support is contained in $\B(M)$, this intersection product gives rise to a 
bilinear pairing 
$$ \mathcal{Z}_{d-k}(\B(M))  \times \mathcal{Z}_{d-l}(\B(M)) \to \mathcal{Z}_{d-k-l}(\B(M)) $$
for any $k,l$ such that $k+l \leq d$.
In particular, for any $Z \in \mathcal{Z}_k(\B(M))$, the intersection product $\B(M) \cdot Z$ in the matroidal cycle $\B(M)$ is simply $Z$.

Using this product we define a
collection of new tropical cycles $n_k(M) \in \mathcal{Z}_k(\B(M))$ for $k = 0, \dots , d$. Firstly, we set 
$$n_d(M) \coloneqq \csm_d(M) = \B(M).$$
Let $A$ be the tropical cycle in $\mathcal{Z}_{d-1}(\B(M))$ obtained by taking the tropical stable intersection in $\R^{n+1} / \mathbf{1}$ of $\B(M)$ with the standard tropical hyperplane $\B(U_{n, n+1})$. 
For $k <d$ we define $n_k(M)$ recursively by the formula 
\begin{equation}\label{formula:Ncycles}
n_{d-k}(M) \coloneqq (-1)^{k} A^{k} - \Bigg [\sum_{i= 0}^{k-1} \csm_{d-k+i}(M) \cdot n_{d-i}(M) \Bigg ], 
\end{equation}
where the intersection products above are now  in $\B(M)$.

\begin{conj}\label{conj:gpoly}
The $g$-polynomial of a loopless rank $d+1$ matroid $M \in \Mat_{n+1}$ is equal to
\begin{equation}\label{eqn:conj}
g_M(t) = \sum_{k = 0}^d (-1)^{d-k}\,\deg(\csm_{k}(M)\cdot n_{d-k}(M))\,t^{k+1},
\end{equation}
where the intersection products occur in the matroidal cycle $\B(M)$ of $M$. 
\end{conj}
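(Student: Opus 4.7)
The plan is to combine the valuativity of both sides of \eqref{eqn:conj} with an evaluation on a spanning family of matroids.

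As a first step, I would repackage the recursion defining $n_{d-k}(M)$ in closed form. Since $\csm_d(M) = \B(M)$ acts as the identity for the intersection product inside $\B(M)$, the defining equation is equivalent to the convolution identity
$$\sum_{u+v=k} \csm_{d-u}(M) \cdot n_{d-v}(M) = (-A)^k, \qquad 0 \leq k \leq d.$$
Introducing the formal generating series
$$\tilde C(s) \coloneqq \sum_{u=0}^d \csm_{d-u}(M)\,s^u \quad \text{and} \quad \tilde N(s) \coloneqq \sum_{v=0}^d n_{d-v}(M)\,s^v$$
with coefficients in the intersection ring of $\B(M)$, the identity reads $\tilde C(s)\cdot \tilde N(s) \equiv (1+As)^{-1}$ modulo $s^{d+1}$. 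Thus $\tilde N(s) = \tilde C(s)^{-1}(1+As)^{-1}$ is determined by $\tilde C(s)$ and $A$, and each $n_{d-k}(M)$ admits a closed-form expression as a $\ZZ$-linear combination of iterated intersection products of the cycles $\csm_j(M)$ and $A$.

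The second step is to show that the right-hand side of \eqref{eqn:conj} is a matroid valuation. Theorem \ref{thm:valuation} establishes valuativity of each $\csm_k$, while $A$ is valuation-valued because it is the stable intersection of $\B(M) = \csm_d(M)$ with the fixed tropical hyperplane $\B(U_{n,n+1})$, and stable intersection with a fixed ambient cycle is $\ZZ$-linear. Using the closed form above, together with the $\ZZ$-linearity of intersection and of $\deg$, the function $M \mapsto \deg(\csm_k(M) \cdot n_{d-k}(M))$ is a valuation. On the other hand, the $g$-polynomial is a matroid valuation by the work of Fink and Speyer, so both sides of \eqref{eqn:conj} are valuations. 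The third step would then be to verify the identity on a family of matroids that spans the group of valuative matroid invariants. The theorem of Derksen and Fink provides such a family via Schubert matroids, with uniform matroids $U_{d+1,n+1}$ as a tractable starting case: Example \ref{ex:csmcycleUniform} gives the CSM cycles as explicit scalar multiples of $\B(U_{k+1,n+1})$, the $g$-polynomial has a known closed form, and the iterated intersections defining $n_{d-k}$ are straightforward. The extension to arbitrary Schubert matroids should follow either by a direct combinatorial expansion of the generating-series formula or by a coloop-extension recursion that interacts compatibly with $\tilde C(s) \cdot \tilde N(s) \equiv (1+As)^{-1}$.

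The main obstacle I anticipate is the explicit Schubert-matroid verification. Although $\tilde N(s) = \tilde C(s)^{-1}(1+As)^{-1}$ is a compact description, inverting it in the intersection ring of $\B(M)$ and matching coefficients with the combinatorial formula for $g_M(t)$ is intricate. A useful intermediate result — and perhaps the crux of any forthcoming proof — would be an enumerative description of $n_{d-k}(M)$ as a signed sum over chains of flats weighted by products of beta invariants of minors, in the spirit of Definition \ref{def:chernweight}. Such a formula would render $\deg(\csm_k(M) \cdot n_{d-k}(M))$ transparently combinatorial and ready to be matched term-by-term against the $g$-polynomial's own combinatorial expansion, completing the valuation-based argument.
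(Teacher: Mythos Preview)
First, note that this statement is labeled a \emph{conjecture} in the paper; the authors do not prove it and say explicitly that a proof will appear in forthcoming work of Fink, Speyer, and Shaw. There is therefore no proof in the paper to compare your proposal against.

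Your generating-series repackaging in step~1 is correct and helpful. The genuine gap is in step~2. You assert that $M \mapsto \deg(\csm_k(M)\cdot n_{d-k}(M))$ is a matroid valuation because each $\csm_j$ and $A$ are valuations and because intersection and degree are $\ZZ$-linear. This inference fails: valuativity is an \emph{additive} inclusion--exclusion condition, and a product of valuations is not in general a valuation. Bilinearity of the intersection product in each argument separately, for a fixed $M$, says nothing about how the diagonal assignment $M \mapsto Z_1(M)\cdot Z_2(M)$ behaves under matroid polytope subdivisions. Concretely, if $Q(M)$ subdivides with interior faces $Q(M_\alpha)$, then valuativity of each factor gives $Z_i(M) = \sum_\alpha (-1)^{\dim Q - \dim Q(M_\alpha)} Z_i(M_\alpha)$, but multiplying these expansions produces cross-terms like $Z_1(M_\alpha)\cdot Z_2(M_{\alpha'})$ with $\alpha \neq \alpha'$ that have no reason to cancel. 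Worse, the intersection product you are using is the one \emph{in} $\B(M)$, which itself varies with $M$; it is not a fixed bilinear pairing on a single abelian group, so even formulating the needed linearity requires care (e.g.\ passing to the permutohedral fan).

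Without valuativity of the right-hand side, the Derksen--Fink reduction to Schubert matroids never gets off the ground, and your step~3 cannot begin. Establishing that the right-hand side of \eqref{eqn:conj} is a valuation is plausibly the real content of any proof along these lines; your outline treats it as a formality, which it is not. The Schubert-matroid computation you flag as ``the main obstacle'' is a second, independent gap on top of this one.
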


\begin{exa} For a loopless matroid  $M$ of rank $d+1$,  
Formula \eqref{formula:Ncycles} gives 
\begin{align*}
n_{d-1}(M) &= -A - \csm_{d-1}(M),\\
n_{d-2}(M) &= A^2 + A \cdot \csm_{d-1}(M) + \csm_{d-1}^2(M) - \csm_{d-2}(M).
\end{align*}
The linear, quadratic, and cubic coefficients of the 
polynomial on the right hand side of Equation \eqref{eqn:conj} are 
up to sign 
\begin{align*}
\csm_0(M) \cdot n_{d}(M) &= \csm_0(M) = (-1)^d \beta(M), \\
- \csm_1(M) \cdot n_{d-1}(M) &= \deg (\csm_1(M)) + \csm_1(M)\cdot\csm_{d-1}(M),\\
\csm_2(M) \cdot n_{d-2}(M) &= \deg (\csm_2(M)) + \deg (\csm_2(M) \cdot \csm_{d-1}(M)) \\
&\quad + \csm_2(M) \cdot \csm_{d-1}^2(M) - \csm_2(M) \cdot \csm_{d-2}(M).
\end{align*}

Consider the case $d=2$, so $M \in \Mat_{n+1}$ 
is a matroid of rank $3$ and $\B(M)$ is a $2$-dimensional tropical cycle. 
The intersection products above are 
\begin{align*}
\csm_0(M) \cdot n_2(M) &= \beta(M), \\
- \csm_1(M) \cdot n_{1}(M) &= \deg (\csm_1(M)) + \csm_1^2(M),\\
\csm_2(M) \cdot n_{0}(M) &= 1 + \deg (\csm_{1}(M)) + \csm_{1}^2(M) - \beta(M).
\end{align*}

For simplicity, let us assume that $M$ has no double points. By repeatedly applying Equation \eqref{eq:degreerecursion},  we find that $\deg(\csm_1(M)) = -(n-2)$. 
Moreover, 
the formula for intersection products of tropical cycles in $2$-dimensional Bergman fans in \cite[Definition 3.6]{BrugalleShaw} gives us 
$$\csm_1^2(M) = (n-2)^2 - \sum_{\substack{ F \in \mathcal{L}(M) \\ r(F) = 2}}(|F| -2)^2.$$
It can be verified that these formulae produce the coefficients of the $g$-polynomials in the examples of rank $3$ matroids presented in \cite[Section 10]{Speyer:Ktheory}. 
\end{exa}

\begin{exa}
Suppose $M$ is the uniform matroid $M=U_{d+1, n+1}$. 
In this case we have $A = \B(U_{d, n+1}) \in \mathcal{Z}_{d-1}(\B(M))$ and   
$A^k =  \B(U_{d-k+1, n+1}) \in \mathcal{Z}_{d-k}(\B(M))$. 
By Example \ref{ex:csmcycleUniform}, we have $\csm_k(M) = (-1)^{d-k}  \binom{n- k-1}{d-k} A^{d-k}$. 
 
We claim that $n_{d-k}(M) = \binom{n-d-1}{k}A^k$ for all $0 \leq k \leq d$. This formula 
is true when $k = 0$, so assume that it
holds for all $l < k$ and proceed by induction. 
By Formula (\ref{formula:Ncycles}) we have  
$$n_{d-k}(M)   = \left[(-1)^{k} - \sum_{i= 0}^{k-1} (-1)^{k-i}  \binom{n-d+k-i-1}{k-i} \binom{n-d-1}{i} \right]A^k.$$
Then the fact that $n_{d-k}(M) = \binom{n-d-1}{k}A^k$ follows from the binomial identity
$$(-1)^k = \sum_{i = 0}^k  (-1)^{k-i} \binom{m+k-i}{k-i}\binom{m}{i}$$
when $m = n-d-1$. 
From these expressions we conclude that 
$$\deg \left[ ( -1)^{d-k}\csm_{k}(M) \cdot n_{d-k}(M) \right] = \binom{n- k-1}{d-k}  \binom{n-d-1}{k}.$$
This coincides with the formula for the coefficients of the $g$-polynomial for uniform matroids \cite[Proposition 10.1]{Speyer:Ktheory}. 
\end{exa}

\bibliographystyle{amsalpha}
\bibliography{chernbib}

\end{document}